\definecolor{linkcolor}{rgb}{0.5,0.0,0.0}
\definecolor{citecolor}{rgb}{0.0,0.5,0.0}
\definecolor{urlcolor} {rgb}{0.0,0.0,0.5}
\title[Classification of Superintegrable Systems in Arbitrary Dimension]
	{An Algebraic Geometric Foundation for a Classification of Superintegrable Systems in Arbitrary Dimension}
\subjclass[2010]{
	Primary
	14H70;  
	Secondary
	70H06,  
	70H33,  
	35N10.	
}
\author[J. Kress]{Jonathan Kress}
\email{j.kress@unsw.edu.au}
\address{%
	School of Mathematics and Statistics \\
	The University of New South Wales \\
	Sydney 2052 \\
	Australia
}
\author[K. Schöbel]{Konrad Schöbel}
\email{konrad.schoebel@htwk-leipzig.de}
\address{%
	Faculty for Digital Transformation \\
	HTWK Leipzig University of Applied Sciences \\
	04251 Leipzig \\
	Germany
}
\author[A. Vollmer]{Andreas Vollmer}
\email{andreas.d.vollmer@gmail.com}
\email{andreas.vollmer@polito.it}
\address{%
	School of Mathematics and Statistics \\
	The University of New South Wales \\
	Sydney NSW 2052 \\
	Australia
}
\address{%
	Institute of Geometry and Topology,
	University of Stuttgart,
	70550 Stuttgart, Germany
}
\address{%
	Dipartimento di Scienze Matematiche (DISMA),
	Politecnico di Torino,
	Corso Duca degli Abruzzi, 24,
	10129 Torino, Italy
}
\numberwithin{equation}{section}
\newtheorem{theorem}{Theorem}[section]
\newtheorem{proposition}[theorem]{Proposition}
\newtheorem{lemma}[theorem]{Lemma}
\newtheorem{corollary}[theorem]{Corollary}
\theoremstyle{definition}
\newtheorem{definition}[theorem]{Definition}
\newtheorem{example}[theorem]{Example}
\theoremstyle{remark}
\newtheorem{remark}[theorem]{Remark}
\setlist[enumerate,1]{label=(\roman*)}
\renewcommand{\geq}{\geqslant}
\DeclareMathOperator{\Sym}{Sym}
\begin{document}

\begin{abstract}

	Second-order superintegrable systems in dimensions two and three are
	essentially classified.  With increasing dimension, however, the
	non-linear partial differential equations employed in current methods
	become unmanageable.  Here we propose a new, algebraic-geometric approach
	to the classification problem -- based on a proof that the classification
	space for irreducible non-degenerate second-order superintegrable systems
	is naturally endowed with the structure of a quasi-projective variety with
	a linear isometry action.  On constant curvature manifolds our approach
	leads to a single, simple and explicit algebraic equation defining the
	variety classifying superintegrable Hamiltonians that satisfy all relevant
	integrability conditions generically.  In particular, this includes all
	non-degenerate superintegrable systems known to date and shows that our
	approach is manageable in arbitrary dimension.  Our work establishes the
	foundations for a complete classification of second-order superintegrable
	systems in arbitrary dimension, derived from the geometry of the
	classification space, with many potential applications to related
	structures such as quadratic symmetry algebras and special functions.

\end{abstract}

\maketitle
\tableofcontents

\section{Introduction}

It is a puzzling happenstance that the fundamental equations of nature
admit \emph{explicit analytic} solutions, at least for simple models.  Think
of Schrödinger's Equation for the hydrogen atom, for instance:  Its explicit solutions
describe the shape of the atomic orbits and explain many of the physical and
chemical properties of over hundred elements known today.  The existence of
explicit solutions in this case stems from a deeper fact -- the hydrogen atom is
a \emph{superintegrable system}.  The present paper develops methods to
explore such systems systematically and exhaustively in arbitrary dimension.

\subsection{What are superintegrable systems?}

Symmetries are an essential tool in the study of Hamiltonian systems, and
superintegrable systems are the most symmetric of these.  The prototypical
example of a superintegrable system is the Kepler system of planetary motion
around a central celestial body.  By the nature of the equations of motion,
the movement of the planet is completely determined by its position and
momentum given at any fixed point in time.  More abstractly, the movement
defines a curve in the six-dimensional \emph{phase-space} of position and
momentum.

In a conservative central force field, the energy and the angular momentum
vector are conserved under the temporal evolution of the system.  The Kepler
system has the remarkable property that it possesses an additional conserved
quantity:  the Laplace-Runge-Lenz vector, pointing from the force centre
towards the perihelion of the planetary orbit.  Together, these form seven
scalar constants of motion.  Each of them defines a function on phase-space
and confines the trajectory of the system to a level set of this function.
Since phase space is six-dimensional, only five out of them can be
functionally independent.  Indeed, there are two scalar identities among them.

The quantum counterpart of the Kepler system is the aforementioned model of
the hydrogen atom.  Its conserved quantities are represented by quantum
numbers, which constitute the ordering principle behind the Periodic Table of
Elements.

With the Kepler system in mind, a (maximally) superintegrable system is
defined as a Hamiltonian system of arbitrary dimension~$n$ possessing the maximal number
of functionally independent constants of motion, which is $2n-1$.  The
superintegrable system is called quadratic if the constants of motion can be
chosen quadratic in the momenta.

The study of superintegrable systems has a long standing history due to the
attractive possibility of determining almost all important features using
algebraic methods alone.  Beyond this obvious motivation, worthwhile in its
own right, there exists also a deeper aspiration to classify superintegrable systems:
They give rise to a large class of special functions.

\subsection{Special functions and superintegrable systems}

Since the appearance of the first tables of chords \cite{Almagest}, special
functions have been ubiquitous in science and technology.  Their fundamental
role necessitates not only explicit formulae or the tabulation of a function's
values, but also a thorough documentation of its properties and
interrelations.  Traditionally, this has been done in the form of handbooks,
most notably the Bateman Manuscript Project \cite{Bateman53,Bateman54},
filling five thick volumes, and the ``Abramowitz and Stegun'' Handbook of
Mathematical Functions \cite{A&S}, with more than 40,000 citations one of the
most cited works in the literature \cite{BCLO11}.  In the dawn of the era of
digitisation, the use of sophisticated symbolic computation engines has
overcome the limitations of books and manual calculations, and handbooks have
been replaced by extensive online databases. The most complete resources today
are the Mathematical Functions Site \cite{Wolfram}, which comprises at present
more than 300,000 formulae and is steadily growing, and the NIST Digital
Library of Mathematical Functions \cite{DLMF}, the online version of the above
mentioned Handbook of Mathematical Functions.

Yet, special functions have always been organised in an ad hoc manner and all
handbooks and databases are mere compilations.  Meanwhile, the search for a
unified theory of special functions has continued since the nineteenth century
-- a theory that would explain and systematically organise, for a reasonably
wide class of special functions, their properties, interrelations, symmetry
principles and other related structures behind the façade of seemingly endless
formulae in rows.

A theory aiming to classify special functions may naturally start from some
rich source of such functions.  This is where superintegrable systems come
into play:  Besides the hypergeometric differential equation, they are a
particularly prolific source of special functions.  Notably, it has been shown
that superintegrable systems give rise to hypergeometric orthogonal
polynomials~\cite{KMP07,KMP13}, to Painlevé transcendants
\cite{Marquette_2020,Gravel}, to Jacobi-Dunkl polynomials \cite{GIVZ13}, and to the
recently discovered exceptional polynomials
\cite{Post&Tsujimoto&Vinet,Hoque&Marquette&Post&Zhang}.

\emph{The present work establishes the foundations for a complete
classification of second-order superintegrable systems} and thereby lays the
foundation for subsequent research leading to such a theory -- a ``Periodic
Table of Special Functions'', so to speak,
comprising a wide variety of special
functions derived from a sequence of projective $G$-varieties whose dimensions
and geometric invariants play the role of the atomic and quantum numbers in the
Periodic Table of Elements.  In analogy to the Schrödinger Equation, which
provides the basis for a systematic mathematical description of chemical
elements and their properties, we establish a single, simple equation defining
these varieties, which provides the basis for a systematic algebraic-geometric
description of special functions and their properties.

\subsection{Classification of superintegable systems -- State-of-the-art}
\label{sec:state-of-the-art}

To date, the most well-developed results on the classification and structure
concern second-order (conformally) superintegrable systems on conformally flat
spaces in dimensions two and three. In particular, such results exist for the
Euclidean plane~\cite{Tem04,KKM07b}, Darboux-Koenigs
spaces~\cite{KKW02,KKMW03}, general 2D spaces \cite{DY06,KKM05a,KKM05b},
degenerate 2D superintegrable systems~\cite{KKMP09}, quantum
2D superintegrable systems~\cite{BDK93,KKM06b,DT07}, 3D flat space \cite{KKM07c},
3D conformally flat spaces \cite{KKM05c,KKM06a,KKM07a,KKM07c,CK14} and for
quantum superintegrable systems on 3D conformally flat spaces \cite{KKM06b}.
For an overview see~\cite{Win04}, \cite{MPW13} or the comprehensive
monograph~\cite{KKM18}.  Most of the systems were known prior to their
classification and have been constructed under the additional assumption of
separability or multiseparability~\cite{KMP00a,KMP00b,KK02}.

Above dimension three, only sporadic families of second-order super\-inte\-grable
systems are known, such as the isotropic harmonic oscillator, a generalisation of the
Kepler system~\cite{Plebanski&Przanowski,Ballesteros&Herranz}, respectively
the hydrogen atom~\cite{Nieto}, and the Smorodinsky-Winternitz systems
I~\cite{FMSUW65} and II~\cite{Kalnins&Kress&Miller&Pogosyan}.%
\footnote{%
	We limit ourselves to second-order systems here.  If one includes higher
	order superintegrable systems, additional families are known, such as the
	Calogero-Moser system~\cite{Wojciechowski83} or the Toda
	lattice~\cite{ADS06}.
}
Further $n$-dimensional families can be obtained from these through orbit
degenerations \cite{CKP15}, Stäckel transforms~\cite{BKM86,Post10} or so called Bôcher
contractions \cite{KMSa,KMSb,RKMS,Kalnins&Kress&Miller&Pogosyan}, induced by İnönü-Wigner contractions of
the isometry group \cite{Inonu&Wigner}.

To summarise, to date complete classification results are only known in dimensions two
and three.  Despite the substantial use of computer algebra, an extension of
the classification to higher dimensions is out of the scope of current methods
and therefore one of the most challenging problems in the theory of
superintegrability.  The main reason for this is that the number and the
complexity of the partial differential equations used in current approaches
grows way too fast with the dimension. \emph{In this work we shall overcome
this hindrance and outline a new approach to the classification of second
order superintegrable systems in arbitrary dimension.}

\subsection{What does a ``classification'' actually mean?}

Before one begins to classify superintegrable systems, one should first
clarify what is actually meant by the word ``classification''.  In its
simplest meaning, it stands for an explicit list of all objects under
consideration or, more formally, a bijection with some explicitly given set --
called the \emph{classification space}.  Usually, however, this set carries
much more structure. In the present case of superintegrable systems, for
instance, the classification space can be endowed with at least three natural
structures:
\begin{description}
	\item[Topology]
		As solutions to a system of partial differential equations, the
		classification space inherits a natural topology.
	\item[Group action]
		The definition of superintegrability is invariant under isometries.
		We therefore have a well-defined action of the isometry group $G$ on
		the classification space.
	\item[Equivalence relation]
		Apart from equivalence under isometries, there is a second well known
		transformation for superintegrable systems, called \emph{Stäckel
		equivalence} \cite{BKM86} or \emph{coupling constant metamorphosis} \cite{HGDR84}.
\end{description}
So instead of a bare set, the classification space for superintegrable systems
is at least a topological $G$-space.  This suggests that a classification of
superintegrable systems for a given (pseudo-)Riemannian manifold should be
considered as an isomorphism in the category of topological $G$-spaces, namely
between the classification space and some explicitly given topological
$G$-space.

More generally, one should first fix a category in which to consider the
classification problem for superintegrable systems.  A solution then consists
of the following:
\begin{enumerate}
	\item
		A proof that the classification space is an object in this category.
	\item
		An explicit object in this category.
	\item
		An isomorphism between the classification space and this object.
\end{enumerate}
\emph{Here we will provide the foundations for a classification of second
order superintegrable systems in the category of projective $G$-varieties.}

\subsection{First result: The classification space is a variety}

We prove that the kinetic parts of the constants of motion determine a
non-degenerate second-order superintegrable system up to free constants in the
potential.  Since the kinetic part is given by a Killing tensor, a
non-degenerate superintegrable system on an $n$-dimensional manifold $M$
defines a $(2n-1)$-dimensional subspace in the finite dimensional space
$\mathcal K(M)$ of Killing tensors.  Hence the classification space can
naturally be identified with a subset in the Grassmannian
$G_{2n-1}\bigl(\mathcal K(M)\bigr)$.  We then prove that this subset is
actually a sub\emph{variety}.

Classical theory has always dealt with partial differential equations to solve
the classification problem for superintegrable systems.  Our result now shows
that these equations are, at its heart, purely algebraic equations which come
disguised as partial differential equations in an intricate manner.  This also
indicates that classical techniques are inadequate:  Instead of solving
partial differential equations, one should try to understand the geometry of
the classification space using powerful algebraic-geometric methods, as has
been noticed in the review paper~\cite{MPW13}:
\begin{quote}
	``The possibility of using methods of algebraic geometry to classify
	superintegrable systems is very promising and suggests a method to extend
	the analysis in arbitrary dimension as well as a way to understand the
	geometry underpinning superintegrable systems.''
\end{quote}
Despite the fact that experts in the field agree that an algebraic-geometric
approach is a promising route to a classification of superintegrable systems
in arbitrary dimension, such a route has never been outlined concretely.
\emph{The subject of the present work is to provide exactly this.}

\subsection{State-of-the-art, revised}

In the light of the aforesaid, it should also be mentioned that the explicit
question about the nature of the classification space has never been raised in
the literature.  Most of the currently known classification results for
superintegrable systems mentioned in Section~\ref{sec:state-of-the-art}
consist in writing down lists of normal forms under isometries or Stäckel
transforms \cite{DY06,Kress07}. In other words, they study the \emph{quotient}
of the classification space under these equivalences.  Although never proven
in general, this quotient turns out to be finite in all known cases.
While
passing to the quotient is convenient, as it yields finite lists of simple
normal forms, it destroys most information about the geometry of the
classification space.  The latter is studied implicitly only, by considering
limits of superintegrable systems in the form of orbit degenerations and
Bôcher contractions.  There exists a characterization by polynomial ideals for
non-degenerate systems in dimension~three \cite{Capel} and on flat
two-dimensional space \cite{KKM07b,KKM07c}, but these do have an ad hoc nature
that does not carry over to higher dimensions.

In summary, the currently known classification of second-order superintegrable
systems should be considered a classification in the category of sets, i.e.\
in the most elementary category.  \emph{The results of the present work entail
that the classification problem for superintegrable systems -- in any
dimension -- should be considered in the category of projective
$G$-varieties.}  In this category, the classification problem remains
unsolved, except for non-degenerate systems in the Euclidean plane
\cite{Kress&Schoebel}.

\subsection{Desiderata}

In the present paper, we propose to approach the classification of
superintegrable systems by studying the geometry of the classification space.
Abstractly proving that the classification space is endowed with the structure
of a variety is, however, insufficient, as it does not provide us with
explicit and manageable algebraic equations.  Ideally, for a viable
approach, we desire the equations to have the following properties:

\begin{description}
	\renewcommand{\itemsep}{1ex}
	\item[Explicit]
		The equations should be written down explicitly.
	\item[Concise]
		There should not be too many equations, and they should be simple.
	\item[Generic]
		The equations should have the same form in any dimension,
		except for dimension dependant constants.
	\item[Tensorial]
		The equations should be tensorial, making them independent of
		coordinate changes on the base manifold.
	\item[Equivariant]
		The equations should be explicitly equivariant under isometries.
	\item[Natural]
		The equations should naturally arise from the definition of
		superintegrability and not, e.g., be derived a posteriori from
		a known classification.
	\item[Algebraic]
		The equations should be polynomial.
	\item[Low-degree]
		The algebraic equations should have a low polynomial degree.
	\item[Solvable]
		It should be possible to solve the equations in any dimension, at best
		without resorting to the (excessive) use of computer algebra.
\end{description}

Note that the equations used in the existing literature to classify
superintegrable systems do not satisfy most of these conditions.
\emph{Somewhat surprisingly, however, it turns out that almost all of them can
be satisfied, as we are going to show in the present work.}%
\footnote{%
	In dimension two our equations have a slightly different form, but our
	methods apply as well.  The flat case is treated in \cite{Kress&Schoebel}
	and a general formulation will be subject to a forthcoming paper.
}

\subsection{Second result:  Explicit algebraic equations}

We give explicit algebraic equations for the variety classifying those
superintegrable Hamiltonians on constant curvature manifolds (in dimensions
$n\geq3$) for which all necessary integrability conditions are generically
satisfied.  This variety comprises all non-degenerate superintegrable systems
known to date.  We show that it is isomorphic to the variety of cubic forms
$\Psi_{ijk}x^ix^jx^k$ on $\mathbb R^n$ satisfying the simple algebraic
equation
\begin{equation}
	\label{eq:Master}
	\Psi\indices{^a_{i[j}}\Psi_{k]la}
	=-\kappa g_{i[j}g_{k]l},
\end{equation}
where $\kappa$ is the (constant) sectional curvature and the brackets denote
antisymmetrisation in $j$ and $k$.  Conjecturally, the whole classification
space fibres over this variety.

Furthermore, we show that every superintegrable system in the classification
space gives rise to a torsion-free affine connection which is flat exactly if
the above equation holds.  The origin of this connection lies in the fact that
one can develop a conformally invariant notion of superintegrability for which
conformal equivalences arise from Stäckel transforms.  This suggests that in
the corresponding conformal geometry the Bertrand-Darboux condition gives rise
to tractor bundles equipped with connections parametrised by superintegrable
systems.  We emphasise that classical superintegrability theory, although
dealing with conformally superintegrable systems on conformally flat
manifolds, has never regarded superintegrability from this geometric
perspective.

A reformulation of superintegrability in terms of projective or conformal
geometry is out of the scope of the present publication, as well as a
comprehensive solution of the above equation, a description of the geometry of
the corresponding variety, a derived complete classification of second-order
superintegrable systems on constant curvature manifolds and of related
structures such as quadratic symmetry algebras and hypergeometric orthogonal
polynomials.  \emph{This program will be carried out in future publications,
based on the results in this article.}

\subsection{What can we expect?}

The algebraic-geometric approach employed here to the classification of
\emph{superintegrable} systems is inspired by a similar approach to the
classification of \emph{separable} systems developed by the first author
\cite{Schoebel15,Schoebel16}, which has culminated in a remarkable isomorphism
between the classification space of separable systems (in normal form) on an
$n$-dimensional sphere and the real Deligne-Mumford-Knudsen moduli space
\[
	\bar{\mathscr M}_{0,n+2}(\mathbb R)
\]
of stable genus zero curves with $n+2$ marked points~\cite{Schoebel&Veselov}.
Separable and superintegrable systems are closely related, suggesting that
here as well we may deal with a renowned variety and prominent geometry.

Most known superintegrable systems are multiseparable, meaning that they
contain different separable systems.  This might even be true for all known
superintegrable systems in a broader sense of multiseparability, allowing for
degenerations with multiplicities.  We therefore expect the classification
space for superintegrable systems to be related to symmetric products of
Deligne-Mumford moduli spaces.

The structure of the moduli spaces $\bar{\mathscr M}_{0,n}$ has revealed an
operad structure on the classification spaces of separable systems on spheres,
which provides a simple explicit construction of those systems avoiding
intricate limit procedures \cite{Schoebel&Veselov}.  We expect similar
structures and corresponding constructions for superintegrable systems.

Both classification approaches -- to separable and superintegrable systems --
are contrasted in more detail in Table~\ref{tab:analogy}.

\begin{table}
	\centering
	\begin{tabular}{l|l|l}
		\toprule
		achievement                          & separable                          & superintegrable \\
		                                     & systems                            & systems         \\
		\midrule
		set-theoretical classification       & arbitrary dimension                & dimensions 2, 3 \\
		for constant curvature spaces        & \cite{Kalnins&Miller,Kalnins}      & (see Section~\ref{sec:state-of-the-art}) \\
		\midrule
		proof that the classification space  & \cite{Schoebel16}                  & present paper,                 \\
		is in general an algebraic variety   &                                    & Theorem \ref{thm:main:variety} \\
		\midrule
		explicit algebraic equations         & \cite{Schoebel12}                  & present paper,                   \\
		for constant curvature spaces        &                                    & Equation \eqref{eq:Master} \\
		\midrule
		algebraic-geometric classification   & 3-sphere \cite{Schoebel14}         & Euclidean plane       \\
		for the simplest non-trivial example &                                    & \cite{Kress&Schoebel} \\
		\midrule
		algebraic-geometric classification   & $n$-sphere \cite{Schoebel&Veselov} & forthcoming \\
		in arbitrary dimension               &                                    & paper       \\
		\midrule
		identification of the                & Deligne-Mumford                    & open problem \\
		corresponding algebraic variety      & moduli spaces                      & \\
		\bottomrule
	\end{tabular}
	\bigskip
	\caption{%
		Analogies between the classifications of separable and superintegrable
		systems
	}
	\label{tab:analogy}
\end{table}

\subsection{Perspectives}

Our proposed approach will provide -- in the truest sense of the word -- a
variety of explicit superintegrable systems, i.e.\ Hamiltonian systems that
can be solved exactly by algebraic means.  Apart from this immediate result,
the actual potential of our approach lies in the fact that it transfers the
classification problem for superintegrable systems from the domain of calculus
to that of algebraic geometry, representation theory and geometric invariant
theory, making it accessible to a whole new range of powerful methods.  This
will lead to a series of generalised and induced classifications as well as
universal constructions of many structures related to superintegrable systems,
such as:

\begin{itemize}
	\item degenerate superintegrable systems
	\item conformally superintegrable systems
	\item superintegrable systems on conformal manifolds
	\item multiseparable superintegrable systems
	\item quantum superintegrable systems
	\item quadratic symmetry algebras and their representations
	\item special functions arising from superintegrable systems
\end{itemize}

Let us give an instructive example.  It has been observed that second-order
superintegrable systems in dimension two are in correspondence to
hypergeometric orthogonal polynomials \cite{KMP07,KMP13}.  This correspondence
can probably be formulated properly as an isomorphism in the category of
oriented graphs, with one graph being given by the Askey scheme
\cite{Askey,Askey&Wilson} and the other by a graph whose vertices represent
superintegrable systems and whose edges represent orbit degenerations and
Bôcher contractions.  In our approach the Askey scheme will appear as the
Hasse diagram of the poset of orbits and orbit closure inclusions on the
classification space.  It is interesting to note in this context that a
structure of a glued manifold with corners has been revealed on the Askey
scheme by analysing the limits of hypergeometric orthogonal polynomials
\cite{Koornwinder} and that any variety naturally carries such a structure as
well.  We expect our approach to lead to a proper definition of
higher-dimensional hypergeometric polynomials and to higher dimensional
generalisations of the Askey-Wilson scheme.  Indeed, the generic
superintegrable system on the 3-sphere can be related to 2-variable Wilson
polynomials \cite{KMP11}, and interbasis expansions for the isotropic 3D
harmonic oscillator are linked to bivariate Krawtchouk polynomials
\cite{GVZ14}.  Also, it has been shown that the only free degenerate quadratic
algebras that can be constructed in phase space are those that arise from
superintegrability \cite{EMS17}.

Note that classically, hypergeometric polynomials have always been studied in
families, each parametrised by a few complex parameters.  What we propose here
is a paradigm shift:  Rather than regarding hypergeometric polynomials as
\emph{many} families, each parametrised by a parameter in a \emph{subset of
$\mathbb C^k$}, we propose to describe them as a \emph{single} family,
parametrised by a parameter in a \emph{projective variety}.

\subsection*{Structure of the paper.}

After briefly reviewing theory, terminology and notation in
Section~\ref{sec:preliminaries}, we introduce the pivotal object of our
approach in Section~\ref{sec:T}: A valence three tensor field encoding all
relevant information about a superintegrable system, called the
\emph{structure tensor}.  Sections~\ref{sec:V} and~\ref{sec:K} are devoted to
the integrability conditions that superintegrability imposes on this tensor.
Our first main statement is proven in Section~\ref{sec:variety}, namely that a
properly defined classification space forms a quasi-projective variety.  In
Section~\ref{sec:R=const} we derive explicit algebraic equations for a related
variety on constant curvature spaces.  Finally, in Section~\ref{sec:examples}
the known $n$-dimensional families of superintegrable systems on constant
curvature spaces are reviewed from the point of view developed in this paper.

\subsection*{Acknowledgements.}

We would like to thank Uwe Semmelmann for drawing our
attention towards Codazzi tensors, Rod Gover for his insights into our work
from the viewpoint of parabolic geometry, and Vladimir Matveev for his advice on the integrability of partial differential equations.
Furthermore, we are grateful towards Mike Eastwood, Holger Dullin,
Benjamin McMillan, Paul-Andi Nagy, Joshua Capel and Jeremy Nugent for many
fruitful discussions related to this paper.

The computer algebra system \texttt{cadabra2} \cite{Peeters06,Peeters07} has
been used to find, prove and simplify some of the most important results in
this work.  We would like to express our gratitude to its authors and
contributors for providing, maintaining and extending their software as well
as for distributing it under a free license.

This research was funded by the German Research Foundation (Deutsche
For\-schungsgemeinschaft DFG) -- project number
353063958. A.V.\ acknowledges the postdoc research fellowship received through this grant as well as as a subsequent return fellowship.

This research was funded partially by the Australian Government through the Australian Research Council grant DP190102360.

\section{Preliminaries}
\label{sec:preliminaries}

\subsection{Superintegrable systems}

An $n$-dimensional Hamiltonian system is a dynamical system characterised by a
Hamiltonian function $H(\mathbf p,\mathbf q)$ on the phase space of positions
$\mathbf q=(q_1,\ldots,q_n)$ and momenta $\mathbf p=(p_1,\ldots,p_n)$.  Its
temporal evolution is governed by the equations of motion
\begin{align*}
	\dot{\mathbf p}&=-\frac{\partial H}{\partial\mathbf q}&
	\dot{\mathbf q}&=+\frac{\partial H}{\partial\mathbf p}.
\end{align*}
A function $F(\mathbf p,\mathbf q)$ on the phase space is called a
\emph{constant of motion} or \emph{first integral}, if it is constant under
this evolution, i.e.\ if
\[
	\dot F
	=\frac{\partial F}{\partial\mathbf q}\dot{\mathbf q}
	+\frac{\partial F}{\partial\mathbf p}\dot{\mathbf p}
	=\frac{\partial F}{\partial\mathbf q}\frac{\partial H}{\partial\mathbf p}
	-\frac{\partial F}{\partial\mathbf p}\frac{\partial H}{\partial\mathbf q}
	=0
\]
or
\[
	\{F,H\}=0,
\]
where
\[
	\{F,G\}=
	\sum_{i=1}^n
	\left(
		\frac{\partial F}{\partial q_i}
		\frac{\partial G}{\partial p_i}
		-
		\frac{\partial G}{\partial q_i}
		\frac{\partial F}{\partial p_i}
	\right)
\]
is the canonical Poisson bracket.  Such a constant of motion restricts the
trajectory of the system to a hypersurface in phase space.  If the system
possesses the maximal number of $2n-1$ functionally independent constants of
motion $F^{(0)},\ldots,F^{(2n-2)}$, then its trajectory in phase space is the
(unparametrised) curve given as the intersection of the hypersurfaces
$F^{(\alpha)}(\mathbf p,\mathbf q)=c^{(\alpha)}$, where the constants
$c^{(\alpha)}$ are determined by the initial conditions.  In this case one
can solve the equations of motion exactly and in a purely algebraic way,
without having to solve explicitly any differential equation.

\begin{definition}\label{def:main.concepts}
	\begin{enumerate} 
		\item
			A \emph{maximally superintegrable system} is a Hamiltonian system
			together with a Poisson algebra generated by $2n-1$ functionally
			independent constants of motion~$F^{(\alpha)}$,
			\begin{subequations}
				\begin{align}
					\label{eq:integral}
					\{F^{(\alpha)},H\}&=0,&
					\alpha&=0,1,\ldots,2n-2,
				\end{align}
				one of which is the Hamiltonian itself:
				\begin{equation}
					F^{(0)}=H.
				\end{equation}
			\end{subequations}
		\item 
			A constant of motion is \emph{second-order} if it is of the form
			\begin{subequations}
				\label{eq:quadratic}
				\begin{equation}
					F^{(\alpha)}=K^{(\alpha)}+V^{(\alpha)},
				\end{equation}
				where
				\begin{equation}
					K^{(\alpha)}(\mathbf p,\mathbf q)=\sum_{i=1}^nK^{(\alpha)}_{ij}(\mathbf q)p^ip^j
				\end{equation}
			\end{subequations}
			is quadratic in momenta and $V^{(\alpha)}=V^{(\alpha)}(\mathbf q)$
			is a function depending only on positions.
		\item
			A (maximally) superintegrable system is \emph{second-order} if its constants
			of motion $F^{(\alpha)}$ can be chosen to be second-order
			\begin{subequations}
				\label{eq:Hamiltonian}
				\begin{equation}
					H=g+V,
				\end{equation}
				where
				\begin{equation}
					g(\mathbf p,\mathbf q)=\sum_{i=1}^ng_{ij}(\mathbf q)p^ip^j
				\end{equation}
			\end{subequations}
			is given by the Riemannian metric $g_{ij}(\mathbf q)$ on the underlying
			manifold.
		\item
			We call $V$ a \emph{superintegrable potential} if the Hamiltonian
			\eqref{eq:Hamiltonian} defines a superintegrable system.
	\end{enumerate}
\end{definition}

In this article we will be concerned exclusively with second-order maximally
superintegrable systems and thus omit the terms ``second-order'' and
``maximally'' without further mentioning.

\subsection{Bertrand-Darboux condition}

The condition \eqref{eq:integral} for \eqref{eq:quadratic} and
\eqref{eq:Hamiltonian} splits into two parts, which are cubic respectively
linear in the momenta $\mathbf p$:

\begin{subequations}
	\label{eq:1st+3rd}
	\begin{align}
		\label{eq:3rd}\{K^{(\alpha)},g\}&=0\\
		\label{eq:1st}\{K^{(\alpha)},V\}+\{V^{(\alpha)},g\}&=0
	\end{align}
\end{subequations}

\begin{definition}
	A (second-order) \emph{Killing tensor} is a symmetric tensor field on a
	Riemannian manifold satisfying the Killing equation
	\[
		\{K,g\}=0
	\]
	or, in components,
	\begin{equation}
		\label{eq:Killing}
		K_{ij,k}+K_{jk,i}+K_{ki,j}=0,
	\end{equation}
	where the comma denotes covariant derivatives.
\end{definition}

\begin{example}
	The metric $g$ is trivially a Killing tensor, since it is covariantly
	constant.
\end{example}

The metric $g$ allows us to identify symmetric forms and endomorphisms.
Interpreting a Killing tensor in this way, as an endomorphism on $1$-forms,
Equation \eqref{eq:1st} can be written in the form
\begin{equation}
	\label{eq:potentials}
	dV^{(\alpha)}=K^{(\alpha)}dV,
\end{equation}
and shows that, once the Killing tensors $K^{(\alpha)}$ are known, the
potentials $V^{(\alpha)}$ can be recovered from $V=V^{(0)}$, up to an
irrelevant constant, provided the integrability conditions
\begin{equation}
	\label{eq:dKdV}
	d(K^{(\alpha)}dV)=0
\end{equation}
are satisfied.  This eliminates the potentials $V^{(\alpha)}$ for
$\alpha\not=0$ from our equations.

\subsection{Generalised Cramer's Rule}

The following generalisation of the well-known Cramer's Rule will be used in
order to solve the overdetermined system of linear equations \eqref{eq:dKdV}
for $V$.

\begin{definition}
	The \emph{Gram Coefficients} $G_k(A)$ of a linear map $A$ are
	defined to be the coefficients of the polynomial
	\[
		\det(1+tAA^*)=\sum_{k=0}^\infty G_k(A)t^k,
	\]
	where $A^*$ denotes the adjoint with respect to an inner product.
\end{definition}

Observe that up to sign and order, the Gram Coefficients of $A$ are the
coefficients of the characteristic polynomial of $AA^*$.  In particular,
$G_k(A)$ is homogeneous of degree $2k$ for real $A$.  The following result is
a consequence of the Cayley-Hamilton Theorem.

\begin{proposition}\cite{DTGVL}
	\label{prop:Moore-Penrose}
	A linear map~$A$ on an inner product space has rank~$r$ if and only if
	\begin{equation}
		\label{eq:rank}
		G_r(A)\not=0=G_{r+1}(A).
	\end{equation}
	In this case, the system of linear equations
	\[
		Ax=b
	\]
	has a solution $x$ if and only if
	\[
		G_{r+1}(A|b)=0.
	\]
	Moreover, the minimal norm solution is given by
	\[
		x=A^\dagger b,
	\]
	where
	\begin{equation}
		\label{eq:Moore-Penrose}
		A^\dagger=\frac1{G_r(A)}\sum_{k=1}^rG_{r-k}(A)(-A^*A)^{k-1}A^*.
	\end{equation}
	is the \emph{Moore-Penrose inverse} of $A$.
\end{proposition}

\subsection{Young projectors}

We will make extensive use of Young projectors, mainly to make tensor
symmetries explicit and to simplify lengthy tensor expressions.  Since here is
not the place for a comprehensive introduction to the representation theory of
symmetric and linear groups, we refer to the literature on this subject, e.g.\
\cite{Fulton,Fulton&Harris} and content ourselves with providing only those examples
appearing in the present work.

A \emph{partition} of a positive integer $n$ is a decomposition of $n$ into a
sum of ordered positive integers:
\begin{align*}
	n&=\lambda_1+\lambda_2+\cdots+\lambda_r&
	\lambda_i&\in\mathbb N&
	\lambda_1\geq\lambda_2\geq\cdots\geq\lambda_r&>0.
\end{align*}
A \emph{Young frame} is a visualisation of a partition by consecutive,
left-aligned rows of square boxes, such as
\[
	\text{\tiny\yng(4,2,2,1)}
	\qquad\text{for}\qquad
	9=4+2+2+1.
\]
Young frames are used to label irreducible representations of the permutation
group~$S_n$ and the induced Weyl representations of $\mathrm{GL}(n)$.  A
\emph{Young tableau} is a Young frame filled with distinct objects, in our
case tensor index names.  Young tableaux are used to define explicit
projectors onto irreducible representations. Let us illustrate this with a couple of
examples used in this article.

A Young tableau consisting of a single row is used to denote complete
symmetrisation, as in
\[
	\young(ijk)S_{ijk}
	=S_{ijk}
	+S_{ikj}
	+S_{kij}
	+S_{kji}
	+S_{jki}
	+S_{jik}.
\]
Similarly, a single column Young tableau denotes complete antisymmetrisation,
\[
	\young(i,j,k)A_{ijk}
	=A_{ijk}
	-A_{ikj}
	+A_{kij}
	-A_{kji}
	+A_{jki}
	-A_{jik}.
\]
A general Young tableau denotes the composition of its row symmetrisers and
column antisymmetrisers.  By convention, we apply antisymmetrisers first.
Operators of this type are (scalar multiples of) projectors, called
\emph{Young projectors}.  The Young projectors used most here are \emph{hook
symmetrisers}, composed of a single row and a single column.  For instance,
\[
	\young(ji,k)
	T_{ijk}
	=
	\young(ji)
	\young(j,k)
	T_{ijk}
	=
	\young(ji)
	(T_{ijk}-T_{ikj})
	=T_{ijk}-T_{ikj}
	+T_{jik}-T_{jki}.
\]
If we want to apply the symmetrisers first, we can use the adjoint operator.
For example
\[
	{\young(ji,k)}^*
	T_{ijk}
	=
	\young(j,k)
	\young(ji)
	T_{ijk}
	=
	\young(j,k)
	(T_{ijk}+T_{jik})
	=T_{ijk}+T_{jik}
	-T_{ikj}-T_{kij}.
\]
Next, it is easy to see that tensors of the form
\[
	R_{ijkl}
	={\young(ij,kl)}^*T_{ijkl}
	=
	\young(i,k)
	\young(j,l)
	\young(ij)
	\young(kl)
	T_{ijkl}
\]
are algebraic curvature tensors, i.e.\ satisfy
\begin{enumerate}
	\item antisymmetry: $R_{jikl}=-R_{ijkl}$,
	\item pair symmetry: $R_{klij}=R_{ijkl}$,
	\item the Bianchi identity: $R_{ijkl}+R_{iklj}+R_{iljk}=0$.
\end{enumerate}
We will use a subscript ``$\circ$'' to indicate a projector onto the
completely trace-free part.  For example,
\[
	W_{ijkl}
	=\frac1{12}{\young(ij,kl)}^*_\circ R_{ijkl}
\]
is the Weyl part in the well known Ricci decomposition
\[
	R_{ijkl}
	=W_{ijkl}
	+\frac1{4 (n-1)}{\young(ik,jl)}^*\mathring R_{ik}g_{jl}
	+\frac1{8n(n-1)}{\young(ik,jl)}^*g_{ik}g_{jl},
\]
where
\[
	\mathring R_{ij}
	=\frac12\,{\young(ij)}_\circ\,R_{ij}
	=R_{ij}-\frac Rng_{ij}
\]
is the trace-free part of the Ricci tensor and $R$ the scalar curvature.

We will also use Young tableaux to denote symmetrisations in a subset of a
tensor's indices, such as in
\[
	\young(j,k)T_{ijk}=T_{ijk}-T_{ikj}.
\]

\section{The structure tensor of a superintegrable system}
\label{sec:T}

Let $M$ be a connected Riemannian manifold of dimension $n\geq3$ with metric
$g$ and Levi-Civita connection $\nabla$.  For simplicity we will -- here and
in what follows -- denote covariant derivatives with a comma and the
trace-free part of the Hessian of $V$ by
\begin{equation}
	\label{eq:Hessian:tracefree}
	\mathring V_{,ij}=V_{,ij}-\frac1n\Delta Vg_{ij}.
\end{equation}
Then, in components, the Bertrand-Darboux condition \eqref{eq:dKdV} for a
Killing tensor $K$ in a superintegrable system reads

\begin{equation}
	\label{eq:dKdV:ij}
	\young(i,j)
	\bigl(K\indices{^m_i}\mathring V_{,jm}+K\indices{^m_{i,j}}V_{,m}\bigr)=0.
\end{equation}

We consider this equation for $K=K^{(\alpha)}$ with $\alpha=0,1,\ldots,2(n-1)$
as a linear system
\begin{equation}
	\label{eq:Ax=b}
	Ax=b,
\end{equation}
where the vector $x$ contains the unknown components of the trace-free Hessian
\eqref{eq:Hessian:tracefree}, the coefficient matrix $A$ the components of the
Killing tensors $K^{(\alpha)}$ and the right hand side $b$ the components of
the second term in the sum \eqref{eq:dKdV:ij} for each $K^{(\alpha)}$.

If the Killing tensors are analytic, the components of the coefficient matrix
$A$ and hence the Gram coefficients $G_k(A)$ are analytic as well.  In
particular, on a Riemannian manifold $M$ with analytic metric, the rank of $A$
is constant on an open and dense subset of $M$ by
Proposition~\ref{prop:Moore-Penrose}.

\begin{definition}
	We say a superintegrable system on a Riemannian manifold $M$ has
	\emph{rank} $r$, if the rank of the coefficient matrix $A$ in
	\eqref{eq:Ax=b} has rank $r$ on an open and dense subset of $M$.
\end{definition}

Note that the maximal rank of a superintegrable system is
\begin{equation}
	\label{eq:rank:max}
	r_\text{max}=\frac{n(n+1)}2-1=\frac{(n-1)(n+2)}2.
\end{equation}
A maximal rank superintegrable system can be characterised more explicitly in
terms of its Killing tensors as follows.  Recall that the Riemannian metric on
the base manifold provides an isomorphism between bilinear forms and
endomorphisms on the tangent space, so that we can identify both silently.

\begin{definition}\label{def:irreducible}
	\begin{enumerate}
		\item
			A set of endomorphisms is \emph{irreducible} if they do not have a
			non-trivial invariant subspace in common.
		\item
			A set of endomorphism fields on a Riemannian manifold $M$ is
			called \emph{irreducible}, if they are pointwise irreducible
			on an open and dense subset of~$M$.
		\item
			We call a superintegrable system \emph{irreducible}, if its
			Killing tensors form an irreducible set.
	\end{enumerate}
\end{definition}

\begin{lemma}
	\label{lemma:irreducible}
	A superintegrable system has maximal rank if and only if it is irreducible.
\end{lemma}

\begin{proof}
	Observe that the first term in the sum \eqref{eq:dKdV:ij} can be written
	as a commutator $[K,\mathring V'']$ of endomorphisms, where $\mathring
	V''$ denotes the trace-free part of the Hessian of~$V$.  The kernel of the
	coefficient matrix $A$ in \eqref{eq:Ax=b} therefore consists of all
	trace-free symmetric endomorphisms commuting with all Killing tensors
	$K^{(\alpha)}$ in the superintegrable system.  Since $A$ has more rows
	than columns, it has maximal rank if and only if its kernel is trivial.
	By Schur's Lemma this is the case if and only if the Killing tensors form
	an irreducible set.
\end{proof}

As a consequence of Proposition~\ref{prop:Moore-Penrose}, we get:

\begin{proposition}
	\label{prop:Wilczynski}
	Every irreducible superintegrable system on a Riemannian manifold $M$
	admits a tensor field $T$ with the following properties:
	\begin{enumerate}
		\item
			$T$ is well-defined and smooth on an open and dense subset of $M$.
		\item
			$T$ has degree three and is symmetric and trace-free in its first
			two indices:
			\begin{align}
				\label{eq:T:symmetries}
				T_{jik}&=T_{ijk}&
				g^{ij}T_{ijk}&=0
			\end{align}
		\item
			The superintegrable potential satisfies
			\begin{equation}
				\label{eq:Wilczynski}
				V_{,ij}
				=T\indices{_{ij}^m}V_{,m}
				+\frac1ng_{ij}\Delta V.
			\end{equation}
		\item
			$T$ is uniquely determined by the Killing tensors $K^{(\alpha)}$
			in the superintegrable system.
		\item
			$T$ only depends on the subspace spanned by the Killing tensors
			$K^{(\alpha)}$, i.e. it is invariant under linear basis changes
			\[
				K^{(\alpha)}
				\mapsto
				\sum_\beta c_{\alpha\beta}K^{(\beta)}
				\qquad
				c\in\mathrm{GL}(n)
			\]
	\end{enumerate}
	The components $T_{ijk}$ of $T$ are given explicitly in terms of the
	Killing tensors by the rank $r$ Moore-Penrose inverse, where
	$r=r_\text{max}$ is the maximal rank \eqref{eq:rank:max}, and are
	well-defined over the complement of the set $\{G_r(A)=0\}$.
\end{proposition}

We remark that equations similar to \eqref{eq:Wilczynski} appear in
\cite{KKM05c}, in local coordinates and for dimension three.

\begin{definition}
	We call the tensor $T_{ijk}$ in Proposition~\ref{prop:Wilczynski} the
	\emph{structure tensor} of an irreducible superintegrable system.
\end{definition}

\begin{example}
	\label{ex:IHO}
	The \emph{isotropic harmonic oscillator} on flat $n$-space has a vanishing
	structure tensor.  It is an irreducible system in the sense of
	Definition~\ref{def:irreducible} and has the potentials
	\[
		V(\mathbf x)=\frac{\omega^2}2(\mathbf x-\mathbf x_0)^2+V_0
	\]
	with $n+2$ free parameters $\omega^2$, $\mathbf x_0$ and $V_0$ as solutions to
	\eqref{eq:Wilczynski}. Note that $V$ can be linearly parametrised by setting $\mathbf{a}=-\omega^2\mathbf{x}_0$ and $a_0=\frac12\omega^2\mathbf{x}_0^2+V_0$.
\end{example}

\begin{example}
	The special case $\mathbf x_0=\mathbf 0$ is compatible with the squares of
	the angular momenta
	\[
		K^{(ij)}=(x^idx^j-x^jdx^i)^2.
	\]
	In dimension $n>3$ these define a \emph{non-maximal} superintegrable
	system which is reducible. Indeed, one easily verifies that
	$K^{(ij)}dV=0$, confirming that $dV$ is a common eigenvector of the
	$K^{(ij)}$.
\end{example}

We would like to mention that our methods are inspired by Wilczynski's series
of papers on the projective differential geometry of surfaces
\cite{Wilczynski_I,Wilczynski_IV}.

\section{Superintegrable potentials}
\label{sec:V}

\subsection{Prolongation of a superintegrable potential}
\label{subsec:V:prolongation}

Equation~\eqref{eq:Wilczynski} expresses the derivative of $\nabla V$ linearly
in $\nabla V$ and $\Delta V$, with coefficients that are determined by the
structure tensor.  The following Proposition shows that this equation can be
extended by a second one to a system expressing the derivatives of $\nabla V$
and $\Delta V$ both linearly in $\nabla V$ and $\Delta V$, with the
coefficients determined by the structure tensor.  An extension of such type is
called \emph{prolongation}.

\begin{proposition}
	\label{prop:prolongation:V}
	The potential of a superintegrable system with structure tensor $T_{ijk}$
	satisfies
	\begin{subequations}
		\label{eq:prolongation:V}
		\begin{alignat}{9}
			\label{eq:prolongation:V:1}
			V_{,ij}
			&=T\indices{_{ij}^m}&&V_{,m}
			+\tfrac1n&g_{ij}&\Delta V\\
			\label{eq:prolongation:V:2}
			\tfrac{n-1}n(\Delta V)_{,k}
			&=q\indices{_k^m}&&V_{,m}
			+\tfrac1n&t_k&\Delta V,
		\end{alignat}
	\end{subequations}
	with the definitions
	\begin{subequations}
		\begin{align}
			\label{eq:t}
			t_j&:=T\indices{_{ij}^i}\\
			\label{eq:q}
			q\indices{_j^m}&:=Q\indices{_{ij}^{im}},
		\end{align}
	\end{subequations}
	where
	\begin{equation}
		\label{eq:Q}
		Q\indices{_{ijk}^m}:=
		T\indices{_{ij}^m_{,k}}
		+T\indices{_{ij}^l}T\indices{_{lk}^m}
		-R\indices{_{ijk}^m}.
	\end{equation}
\end{proposition}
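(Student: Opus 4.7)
The plan is to derive \eqref{eq:prolongation:V:2} from the Wilczynski equation \eqref{eq:prolongation:V:1} by a prolongation: differentiate covariantly, substitute the equation into itself to remove the second derivatives of $V$ that appear, commute the last two covariant derivatives via the Ricci identity, and then trace with the metric.

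First, I would differentiate \eqref{eq:prolongation:V:1} with respect to $x^k$ and then reuse \eqref{eq:prolongation:V:1} to eliminate the resulting $V_{,mk}$-term. This produces an identity of the shape
\begin{equation*}
	V_{,ijk}
	= \bigl(T\indices{_{ij}^m_{,k}} + T\indices{_{ij}^l}T\indices{_{lk}^m}\bigr)V_{,m}
	+ \tfrac{1}{n}T_{ijk}\,\Delta V
	+ \tfrac{1}{n}g_{ij}(\Delta V)_{,k},
\end{equation*}
with $T_{ijk}=T\indices{_{ij}^m}g_{mk}$. The coefficient of $V_{,m}$ already matches the first two terms in the definition \eqref{eq:Q} of $Q\indices{_{ijk}^m}$.

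To extract $(\Delta V)_{,j}$ I would next contract with $g^{ik}$, noting that a contraction with $g^{ij}$ would be trivial since $T$ is trace-free in $(i,j)$. On the left hand side, since $V_{,ijk}$ is symmetric only in $(i,j)$, the Ricci identity applied to the covector $V_{,i}$ must be used to interchange the last two covariant derivatives, contributing an additional Riemann term $g^{ik}R\indices{_{ijk}^m}V_{,m}$ alongside $(\Delta V)_{,j}$. On the right hand side, $g^{ik}g_{ij}=\delta^k_j$ yields $\tfrac{1}{n}(\Delta V)_{,j}$, and $g^{ik}T_{ijk} = T\indices{_{ij}^i} = t_j$ by the symmetry of $T$ in its first two indices.

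Moving $\tfrac{1}{n}(\Delta V)_{,j}$ to the left then produces the prefactor $\tfrac{n-1}{n}$, and the coefficient of $V_{,m}$ on the right becomes $g^{ik}\bigl(T\indices{_{ij,k}^m}+T\indices{_{ij}^l}T\indices{_{lk}^m}-R\indices{_{ijk}^m}\bigr) = g^{ik}Q\indices{_{ijk}^m} = q\indices{_j^m}$, as required by \eqref{eq:q} and \eqref{eq:Q}. The argument is a direct index computation, so no serious obstacle is expected; the only care needed is with the sign conventions of the Ricci identity and of the Riemann contraction (and the appeal to the first Bianchi identity plus the antisymmetries of $R$ to identify the two possible Ricci-type contractions), which the minus sign in front of $R\indices{_{ijk}^m}$ in the definition of $Q$ is tailored to accommodate.
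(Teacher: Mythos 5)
Your proposal is correct and follows essentially the same route as the paper: differentiate \eqref{eq:prolongation:V:1}, substitute it back into itself, invoke the Ricci identity, and take a trace to isolate $(\Delta V)_{,k}$. The only (cosmetic) difference is that the paper first antisymmetrises in $(j,k)$ and then contracts in $(i,j)$, using that $T_{ijk}$ and $Q\indices{_{ijk}^m}$ are trace-free in $(i,j)$, whereas you contract over $(i,k)$ directly and apply the Ricci identity to the contracted left-hand side; both orderings yield the same prefactor $\tfrac{n-1}{n}$ and the same coefficients $q\indices{_k^m}$ and $t_k$.
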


\begin{proof}
	Equation~\eqref{eq:prolongation:V:1} is a copy of
	Equation~\eqref{eq:Wilczynski}.  Substituting it into its covariant
	derivative, we obtain
	\begin{align*}
		V_{,ijk}&
		=T\indices{_{ij}^m_{,k}}V_{,m}
		+T\indices{_{ij}^m}V_{,mk}
		+\tfrac1ng_{ij}(\Delta V)_{,k}\\&
		=\bigl(
			T\indices{_{ij}^m_{,k}}+T\indices{_{ij}^l}T\indices{_{lk}^m}
		\bigr)V_{,m}
		+\tfrac1n
		\bigl(
			T_{ijk}\Delta V
			+g_{ij}(\Delta V)_{,k}
		\bigr).
	\end{align*}
	Antisymmetrisation in $(j,k)$ and application of the Ricci identity yields
	\[
		R\indices{^m_{ijk}}V_{,m}
		=\young(j,k)
		\Bigl[
			\bigl(
				T\indices{_{ij}^m_{,k}}
				+T\indices{_{ij}^l}T\indices{_{lk}^m}
			\bigr)V_{,m}
			+\tfrac1n
			\bigl(
				T_{ijk}\Delta V
				+g_{ij}(\Delta V)_{,k}
			\bigr)
		\Bigr].
	\]
	Solving for the last term on the right hand side, we get
	\[
		\frac1n\young(j,k)g_{ij}(\Delta V)_{,k}
		=-\young(j,k)
		\bigl(
			Q\indices{_{ijk}^m}V_{,m}
			+\tfrac1nT_{ijk}\Delta V
		\bigr).
	\]
	The contraction of this equation in $(i,j)$ now yields
	\eqref{eq:prolongation:V:2}, since $T_{ijk}$ and $Q\indices{_{ijk}^m}$ are
	trace-free in $(i,j)$ by definition.
\end{proof}

The System~\eqref{eq:prolongation:V} can be used to express all higher
derivatives of $\nabla V$ and $\Delta V$ linearly in $\nabla V$ and $\Delta
V$.  In particular, all higher derivatives of $V$ in a fixed point are
determined by the values of $V$, $\nabla V$ and $\Delta V$ in that point.  So
if $V$ is analytic, this determines $V$ locally up to a constant.  This
remains true even if $V$ is not analytic.  Therefore, the space of solutions
of the initial partial differential equation~\eqref{eq:Wilczynski} is finite
dimensional with maximal dimension $n+2$.  This motivates the following
generalisation of the notion of non-degeneracy commonly employed in dimensions
two and three~\cite{Kalnins&Kress&Pogosyan&Miller}.

\begin{definition}
	\label{def:non-degenerate}
	We call a superintegrable system \emph{non-degenerate}, if Equation
	\eqref{eq:Wilczynski} admits an $(n+2)$-dimensional space of solutions
	$V$.
\end{definition}

\subsection{Integrability conditions for a superintegrable potential}

Non-degen\-eracy is just the condition that assures that the integrability
conditions of the System~\eqref{eq:prolongation:V} are satisfied generically,
i.e. independently of the potential.  This will eliminate the potential $V$,
leaving equations involving only the structure tensor, respectively the
Killing tensors of the superintegrable system.

\begin{proposition}
	\label{prop:SIC:V}
	The following are necessary and sufficient conditions for the existence
	and uniqueness of a solution $V$ of the prolongation equation
	\eqref{eq:prolongation:V}, given the values of $\nabla V$ and $\Delta V$
	in a fixed point $x_0\in M$:
	\begin{subequations}
		\label{eq:SIC:V}
		\begin{align}
			\label{eq:SIC:V:linear}
			\young(j,k)\Bigl(T_{ijk}+\tfrac1{n-1}g_{ij}t_k\Bigr)&=0\\
			\label{eq:SIC:V:quadratic}
			\young(j,k)\Bigl(Q_{ijkl}+\tfrac1{n-1}g_{ij}q_{kl}\Bigr)&=0\\
			\label{eq:SIC:V:cubic}
			\young(k,l)
			\bigl(
				q\indices{_k^n_{,l}}
				+T\indices{_{ml}^n}q\indices{_k^m}
				+\tfrac1{n-1}t_kq\indices{_l^n}
			\bigr)&=0.
		\end{align}
	\end{subequations}
\end{proposition}

\begin{proof}
	The system \eqref{eq:prolongation:V} allows us to write all higher
	derivatives of $\nabla V$ and $\Delta V$ as linear combinations of $\nabla
	V$ and $\Delta V$.  Necessary and sufficient integrability conditions are
	then obtained by applying this procedure to the left hand sides of the
	Ricci identities
	\begin{align*}
		\young(j,k)V_{,ijk}&=R\indices{^m_{ijk}}V_{,m}&
		\young(k,l)(\Delta V)_{,kl}&=0.
	\end{align*}
	This results in
	\begin{align*}
		\young(j,k)
		\bigl(
			Q\indices{_{ijk}^m}
			+\tfrac1{n-1}g_{ij}q\indices{_k^m}
		\bigr)V_{,m}
		+\frac1n\young(j,k)
		\bigl(
			T_{ijk}
			+\tfrac1{n-1}g_{ij}t_k
		\bigr)\Delta V
		&=0\\
	\intertext{and, respectively,}
		\young(k,l)
		\bigl(
			q\indices{_k^n_{,l}}
			+T\indices{_{ml}^n}q\indices{_k^m}
			+\tfrac1{n-1}t_kq\indices{_l^n}
		\bigr)V_{,n}
		+\frac1n\young(k,l)
		\bigl(
			t_{k,l}
			+q_{kl}
		\bigr)\Delta V
		&=0.
	\end{align*}
	For a non-degenerate superintegrable potential the coefficients of $\Delta
	V$ and $\nabla V$ must vanish.  In addition to the stated integrability
	conditions, this yields the condition
	\begin{equation}
		\label{eq:t+q}
		\young(k,l)
		\bigl(
			t_{k,l}
			+q_{kl}
		\bigr)=0.
	\end{equation}
	The latter is redundant, however, as it can be obtained from
	\eqref{eq:SIC:V:quadratic} via a contraction over $(i,l)$.
\end{proof}

In the remainder of this section we cast the above integrability coditions for
a superintegrable potential into the following simpler form.

\begin{proposition}
	\label{prop:SIC:V:bis}
	In dimension $n\geq3$, the integrability conditions~\eqref{eq:SIC:V} for a superintegrable
	potential are equivalent to the algebraic conditions
	\begin{subequations}
		\label{eq:SIC:V:bis}
		\begin{align}
			\label{eq:SIC:V:symmetries}
			{\young(ji,k)}^*_{\mspace{-28mu}\circ\mspace{+28mu}}T_{ijk}&=0\\
			\label{eq:SIC:V:Weyl}
			\frac18{\young(ik,jl)}^*_\circ T\indices{^a_{ik}}T_{ajl}&=W_{ijkl}
		\end{align}
		and the differential condition
		\begin{equation}
			\label{eq:SIC:V:differential}
			{\young(kji,l)}\,\left(T_{ijk,l}+\frac2{n-2}g_{ik}Z_{jl}\right)=0
		\end{equation}
		with
		\begin{equation}
			\label{eq:Z}
			Z_{ij}:=
			\mathring T\indices{_i^{ab}}\mathring T_{jab}
			-(n-2)(\mathring T\indices{_{ij}^a}\bar t_a+\bar t_i\bar t_j)
			-R_{ij},
		\end{equation}
		where $\mathring T_{ijk}$ and $\bar t_i$ are the trace-free part and
		the rescaled non-vanishing trace of the structure tensor, given in
		\eqref{eq:T:components}.
	\end{subequations}
\end{proposition}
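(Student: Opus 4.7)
The plan is to decompose each of the integrability conditions \eqref{eq:SIC:V:linear}--\eqref{eq:SIC:V:cubic} into its irreducible components under $O(n)$ and to match these with the equivalent conditions \eqref{eq:SIC:V:symmetries}--\eqref{eq:SIC:V:differential}. Throughout the argument I exploit that $T_{ijk}$ is symmetric and trace-free in $(i,j)$.

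First I would treat \eqref{eq:SIC:V:linear}. Since $T_{jik}=T_{ijk}$, the antisymmetrisation $\young(j,k)T_{ijk}=T_{ijk}-T_{ikj}$ coincides, up to a factor of two, with the hook projection ${\young(ji,k)}^*T_{ijk}$. Taking the totally trace-free part of \eqref{eq:SIC:V:linear} therefore yields \eqref{eq:SIC:V:symmetries}. Conversely, once ${\young(ji,k)}^*_\circ T_{ijk}=0$ the tensor $T_{ijk}-T_{ikj}$ is a pure trace in $g$, and computing its traces in $(i,j)$ and $(i,k)$ uniquely forces the right-hand side of \eqref{eq:SIC:V:linear}.

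Next, substituting \eqref{eq:Q} into \eqref{eq:SIC:V:quadratic} gives
\[
\young(j,k)\bigl[T_{ijl,k}+T\indices{_{ij}^a}T_{akl}-R_{ijkl}+\tfrac{1}{n-1}g_{ij}q_{kl}\bigr]=0,
\]
which I would split into its totally trace-free part and its complementary trace parts. The trace-free projection is derivative-free: on the curvature side the Ricci decomposition isolates the Weyl tensor, while on the $T\cdot T$ side the trace-free part of $\young(j,k)T\indices{_{ij}^a}T_{akl}$ equals, up to a numerical factor, the Young projection ${\young(ik,jl)}^*_\circ T\indices{^a_{ik}}T_{ajl}$. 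Matching the two gives \eqref{eq:SIC:V:Weyl}. For the complementary traceful part, I would use \eqref{eq:SIC:V:symmetries} to convert the partial antisymmetrisation $\young(j,k)T_{ijl,k}$ into the full hook projection $\young(kji,l)T_{ijk,l}$ modulo trace terms; decomposing $T=\mathring T+(\text{trace terms in }\bar t)$ and collecting the algebraic remainders -- namely $\mathring T\indices{_i^{ab}}\mathring T_{jab}$, $\mathring T\indices{_{ij}^a}\bar t_a$, $\bar t_i\bar t_j$ and $R_{ij}$ -- assembles them into the tensor $Z$ with prefactor $2/(n-2)$, producing \eqref{eq:SIC:V:differential}. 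The appearance of $(n-2)^{-1}$ explains why this argument excludes $n=2$, consistent with the footnote on dimension two.

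Finally, \eqref{eq:SIC:V:cubic} should be redundant: I expect it to follow by covariantly differentiating \eqref{eq:SIC:V:quadratic}, contracting a suitable pair of indices, and applying the differential Bianchi identity for $R$ together with \eqref{eq:SIC:V:symmetries}. The main obstacle throughout is the combinatorial bookkeeping of Young projectors, trace contractions and Ricci decomposition -- in particular, matching the exact numerical coefficients, and tracking how $(n-2)^{-1}$ emerges in $Z$, is delicate and presumably explains the acknowledgement of \texttt{cadabra2} for finding and verifying such identities.
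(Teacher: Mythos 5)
Your treatment of the first condition matches the paper's Proposition~\ref{prop:SIC:V:linear}, and your expectation that \eqref{eq:SIC:V:cubic} is redundant is in the spirit of Proposition~\ref{prop:SIC:V:cubic} (though the paper needs, besides the contraction of \eqref{eq:SIC:V:differential}, also its differentiated consequence \eqref{eq:SIC:integrability} and the Ricci identity for $t_k$ to close that argument). The genuine gap is in your handling of \eqref{eq:SIC:V:quadratic}: you split it into its totally trace-free part and a complementary trace part and assert that ``the trace-free projection is derivative-free''. This is false. After imposing \eqref{eq:SIC:V:symmetries}, the derivative term is, modulo pure traces, $\young(j,k)\mathring T_{ijl,k}$ with $\mathring T$ totally symmetric; by the Littlewood--Richardson rule $\yng(3)\otimes\yng(1)\cong\yng(4)\oplus\yng(3,1)$ its non-trace content sits in the hook symmetry type $\yng(3,1)$ and is generically non-zero even after removing traces. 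That piece survives your totally trace-free projection, so this projection yields \eqref{eq:SIC:V:Weyl} \emph{together with} the trace-free part of \eqref{eq:SIC:V:differential} --- note that ${\young(kji,l)}\,g_{ik}Z_{jl}$ is pure trace, so the non-trivial trace-free content of \eqref{eq:SIC:V:differential} is precisely the vanishing of the $\yng(3,1)$-component of $\nabla T$ --- while your ``complementary traceful part'' misses that content. The decomposition that actually separates the two equations is by symmetry type, not by trace: symmetrising \eqref{eq:SIC:V:quadratic} in $(i,l)$ projects onto the window symmetry $\yng(2,2)$, which is absent from $\nabla\mathring T$ by the Littlewood--Richardson computation above; \emph{that} is why the Weyl equation is purely algebraic. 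Antisymmetrising in $(i,l)$ instead produces \eqref{eq:SIC:V:Ricci:bis}, which carries the derivative term and the tensor $Z$.

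A second, smaller omission: passing from the antisymmetrised equation, whose projector $\young(ij)\young(k,l)$ decomposes as $\yng(3,1)\oplus\yng(2,1,1)$, to the pure hook projection \eqref{eq:SIC:V:differential} requires showing that the $\yng(2,1,1)$-component vanishes identically; the paper does this by an explicit computation using \eqref{eq:SIC:V:linear} and the fact that $PP^*P$ is proportional to $P$ for any Young projector. Your phrase about ``converting the partial antisymmetrisation into the full hook projection modulo trace terms'' gestures at this step but supplies no mechanism for it.
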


\begin{proof}
	Proposition~\ref{prop:SIC:V:bis} follows from
	Propositions~\ref{prop:SIC:V:linear}, \ref{prop:SIC:V:quadratic} and
	\ref{prop:SIC:V:cubic} below.
\end{proof}

\subsubsection{The 1\textsuperscript{st} integrability condition}

We can solve Equation~\eqref{eq:SIC:V:linear} right away, because it is linear
and does not involve derivatives.

\begin{proposition}
	\label{prop:SIC:V:linear}
	The first of the integrability conditions~\eqref{eq:SIC:V} can be written
	in the form~\eqref{eq:SIC:V:symmetries} and is equivalent to the following
	decomposition of the structure tensor:
	\begin{equation}
		\label{eq:St2T}
		T_{ijk}
		=\mathring T_{ijk}
		+\young(ij)\Bigl(\bar t_ig_{jk}-\frac 1ng_{ij}\bar t_k\Bigr),
	\end{equation}
	where
	\begin{subequations}
		\label{eq:T:components}
		\begin{align}
			\mathring T_{ijk}&=\frac16{\young(ijk)\!}_\circ\,T_{ijk}\\
			\bar t_i&=\frac n{(n+2)(n-1)}T\indices{_{ij}^j}
		\end{align}
	\end{subequations}
	are the trace-free part and the rescaled non-vanishing trace of the
	structure tensor.  Note that both are uniquely determined by $T_{ijk}$ and
	vice versa.
\end{proposition}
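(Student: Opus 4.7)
The plan is to interpret the first integrability condition \eqref{eq:SIC:V:linear} as a statement about the $O(n)$-isotypic decomposition of the structure tensor. Since $T_{ijk}$ is symmetric and trace-free in its first two indices, the ambient representation decomposes into three $O(n)$-irreducibles: a completely symmetric trace-free piece, a trace-free hook, and a vector piece built from the single available trace $t_k=T\indices{_{ij}^i}$. I expect \eqref{eq:SIC:V:linear} to kill precisely the trace-free hook component, leaving $T_{ijk}$ reconstructible from $\mathring T$ and $\bar t$.

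To establish that \eqref{eq:SIC:V:linear} is equivalent to \eqref{eq:SIC:V:symmetries}, I would first expand ${\young(ji,k)}^*T_{ijk}$ from the definition, then invoke the $(i,j)$-symmetry of $T$ to collapse the four terms to a multiple of the antisymmetrisation $T_{ijk}-T_{ikj}$. The projection to the trace-free part (the subscript~$\circ$) subtracts off all available traces; by trace-freeness of $T$ in $(i,j)$ and antisymmetry in $(j,k)$, the only possibility is a multiple of $g_{ij}t_k-g_{ik}t_j$. Matching the $(i,j)$-trace of both sides then fixes the coefficient $\tfrac{1}{n-1}$ and identifies ${\young(ji,k)}^*_\circ T_{ijk}=0$ with \eqref{eq:SIC:V:linear}.

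To derive the explicit decomposition \eqref{eq:St2T}, I would write $T_{ijk}=S_{ijk}+H_{ijk}$ with $S_{ijk}=\tfrac13(T_{ijk}+T_{ikj}+T_{jki})$ the completely symmetric part and $H_{ijk}$ the hook remainder, both inheriting $(i,j)$-symmetry. The hook $H$ automatically satisfies the cyclic identity $H_{ijk}+H_{jki}+H_{kij}=0$, and under \eqref{eq:SIC:V:linear} must be purely of trace type; the most general compatible ansatz is $H_{ijk}=b(-2g_{ij}t_k+g_{ik}t_j+g_{jk}t_i)$, where the ratio $-2$ is forced by the cyclic identity and $b$ is pinned down by \eqref{eq:SIC:V:linear}. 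For the symmetric part I would peel off its trace-free core $\mathring S$ using the computation $g^{ij}S_{ijk}=\tfrac{2}{3}t_k$. Setting $\mathring T:=\mathring S=\tfrac16\,{\young(ijk)}_\circ T_{ijk}$ and collecting the trace contributions from both $S$ and $H$ yields the right-hand side of \eqref{eq:St2T}, with the normalisation $\bar t_i=\tfrac{n}{(n+2)(n-1)}T\indices{_{ij}^j}$ absorbing the remaining rational coefficients. The reverse implication is immediate substitution. The main obstacle is the rational-coefficient bookkeeping: only the precise combination of the hook ansatz, the trace-free projection of $S$, and the rescaling of $t$ produces the clean closed-form expressions stated.
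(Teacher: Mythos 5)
Your proposal is correct and follows essentially the same route as the paper's proof: both identify the isotypic decomposition of a tensor symmetric and trace-free in its first two indices (totally symmetric trace-free part, trace-free hook, and one vector trace), observe that the first integrability condition annihilates precisely the trace-free hook component, and then fix the trace coefficients from the symmetries of $T_{ijk}$ and the definition of $t_k$. Your version merely makes explicit the coefficient bookkeeping (the hook ansatz with ratio $-2$ and the value $b=\tfrac{1}{3(n-1)}$) that the paper leaves as a one-line remark, and your checks all come out consistent with \eqref{eq:St2T} and \eqref{eq:T:components}.
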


\begin{proof}
	First note that \eqref{eq:SIC:V:linear} can be written in the form
	\eqref{eq:SIC:V:symmetries} by the definition of the Young projector.

	The structure tensor $T_{ijk}$ is symmetric in $(i,j)$.  According to the
	Littlewood-Richardson rule its symmetry class is therefore
	\[
		\yng(2)\otimes\yng(1)
		\cong
		\yng(3)\oplus\yng(2,1)
		\cong
		{\yng(3)}_\circ\oplus\yng(1)\oplus{\yng(2,1)}_\circ\oplus\yng(1).
	\]
	This means that $T_{ijk}$ can be decomposed into a trace-free and totally
	symmetric part, a trace-free part of hook symmetry, and two independent
	traces.  By \eqref{eq:SIC:V:symmetries} the trace-free hook symmetric part
	vanishes.  This implies that the trace-free part $\mathring T_{ijk}$ of
	the structure tensor is totally symmetric.  The two remaining trace terms
	are of the form
	\begin{align*}
		&{\young(ijk) }\,g_{ij}\sigma_k&
		&{\young(ji,k)}\,g_{ij}\tau_k
	\end{align*}
	and their traces $\sigma_k$ respectively $\tau_k$ can be determined from
	\eqref{eq:T:symmetries} and \eqref{eq:t}.
\end{proof}

\begin{corollary}~
	\label{cor:symmetry}
	\begin{enumerate}
		\item The tensor $q_{ij}$ is symmetric: $q_{ji}=q_{ij}$.
		\item The tensor $t_i$ is the derivative of a function $t$,
			i.e.\ $t_i=t_{,i}$, and similarly for $\bar t_i$.
	\end{enumerate}
\end{corollary}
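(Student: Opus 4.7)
My plan is to derive a second linear relation between the antisymmetric parts $q_{[jk]}$ and $t_{[j,k]}$, independent of~\eqref{eq:t+q}, so that the two combined force both antisymmetric parts to vanish.

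The new relation will be obtained by tracing~\eqref{eq:SIC:V:quadratic} with $g^{il}$. Using the definition~\eqref{eq:Q} of $Q$, three traces appear. First, $g^{il}T_{ijl,k} = t_{j,k}$, which follows from the $(i,j)$-symmetry of $T$ established in Proposition~\ref{prop:SIC:V:linear}, identifying the two natural traces of $T$ and recovering $t$. Second, $g^{il}T\indices{_{ij}^a}T_{akl}$ is symmetric in the free indices $(j,k)$, since relabelling the dummy pairs $(i,l)\leftrightarrow(a,b)$ interchanges the two $T$-factors. Third, $g^{il}R_{ijkl}$ equals, up to a sign, the Ricci tensor of the Levi--Civita connection, and hence is symmetric in $(j,k)$. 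Only the first of these contributes under $(j,k)$-antisymmetrisation, so the $g^{il}$-contraction of \eqref{eq:SIC:V:quadratic} yields
\begin{equation*}
  q_{jk} - q_{kj} \;=\; (n-1)\bigl(t_{j,k} - t_{k,j}\bigr).
\end{equation*}

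On the other hand, the identity~\eqref{eq:t+q}, already derived immediately before this corollary, rearranges to $q_{jk}-q_{kj} = -(t_{j,k}-t_{k,j})$. Equating the two expressions for $q_{[jk]}$ gives $n(t_{j,k}-t_{k,j}) = 0$, so $t$ is a closed one-form and hence, by the Poincaré lemma, locally exact: $t_i = t_{,i}$ for some function~$t$. The decomposition of $T$ in Proposition~\ref{prop:SIC:V:linear} shows that $\bar t_i$ is a non-zero scalar multiple of $t_i$ (numerically $t_i = \tfrac{(n-1)(n+2)}{n}\bar t_i$), hence is also a gradient, proving~(ii). Substituting $t_{[j,k]}=0$ back into \eqref{eq:t+q} immediately gives $q_{jk}=q_{kj}$, proving~(i).

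The main subtlety is the bookkeeping in the trace computation, specifically verifying $g^{il}T_{ijl,k}=t_{j,k}$: the definition $t_j:=T\indices{_{ij}^i}$ differs on its face from $g^{il}T_{ijl} = T\indices{_{ij}^j}$, and matching the two relies on the $(i,j)$-symmetry of $T$ from Proposition~\ref{prop:SIC:V:linear}. Once this identification is in hand, the symmetry of the quadratic $T\cdot T$ trace and of the Ricci contraction are immediate, and what remains is elementary linear algebra on a two-by-two system in $(q_{[jk]}, t_{[j,k]})$.
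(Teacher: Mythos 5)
Your proof is correct, but it takes a genuinely different route from the paper's. The paper proves the symmetry of $q_{ij}$ first and directly: it substitutes the decomposition \eqref{eq:St2T} of the structure tensor (i.e.\ the content of the first integrability condition \eqref{eq:SIC:V:linear}, which forces $\mathring T_{ijk}$ to be totally symmetric) into the definition \eqref{eq:q} of $q_{ij}$ and reads off $q_{ji}=q_{ij}$ by inspection; closedness of $t_i$ then drops out of \eqref{eq:t+q}. You instead extract a second linear relation between the antisymmetric parts $q_{[jk]}$ and $t_{[j,k]}$ by tracing the second integrability condition \eqref{eq:SIC:V:quadratic} with $g^{il}$ and solve the resulting two-by-two system. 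I have checked the trace computation: $g^{il}T_{ijl,k}=t_{j,k}$, the quadratic term $g^{il}T\indices{_{ij}^a}T_{akl}$ is symmetric in $(j,k)$ under relabelling of the dummy pairs, and the curvature contraction drops out of the antisymmetrisation; the two relations $q_{[jk]}=(n-1)\,t_{[j,k]}$ and $q_{[jk]}=-t_{[j,k]}$ have determinant $n\neq 0$, so the argument closes. The trade-off is that the paper's proof of part (i) uses only the first integrability condition and a purely algebraic substitution (consistent with the corollary's placement in the subsection on that condition), whereas yours imports \eqref{eq:SIC:V:quadratic} -- still legitimate, since it is part of the standing hypotheses for a non-degenerate system, and your trace identity is essentially the content behind the paper's earlier remark that \eqref{eq:t+q} is redundant. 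One small inaccuracy in your commentary: the aside ``$g^{il}T_{ijl}=T\indices{_{ij}^j}$'' mismatches free indices; in fact $g^{il}T_{ijl}$ is literally the contraction of the first and third slots and hence equals $t_j$ by the definition \eqref{eq:t}, with no appeal to the $(i,j)$-symmetry needed. This does not affect the validity of your argument.
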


\begin{proof}
	The first statement follows from substituting \eqref{eq:St2T} into the
	definition \eqref{eq:q} of $q_{ij}$.  The second then follows from
	\eqref{eq:t+q}.
\end{proof}

\subsubsection{The 2\textsuperscript{nd} integrability condition}

\begin{proposition}
	\label{prop:SIC:V:quadratic}
	Assume the structure tensor of a superintegrable system satisfies the
	first integrability condition~\eqref{eq:SIC:V:linear}.  Then the second
	integrability condition~\eqref{eq:SIC:V:quadratic} is equivalent to
	\eqref{eq:SIC:V:Weyl} and \eqref{eq:SIC:V:differential}.
\end{proposition}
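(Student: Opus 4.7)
The plan is to substitute the definition \eqref{eq:Q} of $Q$ into \eqref{eq:SIC:V:quadratic} and decompose the resulting tensor equation into its algebraically irreducible components under the symmetric group $S_4$ acting on the four free indices. After writing $Q_{ijkl}=T_{ijl,k}+T\indices{_{ij}^a}T_{akl}-R_{ijkl}$ and expanding $q_{kl}=g^{ab}Q_{akbl}$ as a divergence of $T$ plus algebraic $T\cdot T$ minus the Ricci tensor, one may use the first integrability condition \eqref{eq:SIC:V:linear} in its decomposed form \eqref{eq:St2T}: this splits $T$ into a totally symmetric trace-free part $\mathring T$ and an explicit pure-trace part involving $\bar t$ and $g$, which controls how the various $T$-terms distribute over the Young symmetry classes.

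Write $X_{ijkl}:=\young(j,k)\bigl(Q_{ijkl}+\tfrac1{n-1}g_{ij}q_{kl}\bigr)$. As a tensor that is antisymmetric in $(j,k)$ and whose $T$-terms are symmetric in $(i,j)$, $X_{ijkl}$ naturally splits into (i) a trace-free Weyl-symmetry piece of shape $\yng(2,2)$, (ii) a hook-symmetry piece of shape $\yng(3,1)$, and (iii) trace parts. In piece (i), the derivative term $T_{ijl,k}$ drops out because $\mathring T$ is totally symmetric, so $\nabla\mathring T$ lies in $\yng(4)\oplus\yng(3,1)$ and has no $\yng(2,2)$ component, while the metric-proportional trace part of $T$ contributes only to traces; what remains is the algebraic identity \eqref{eq:SIC:V:Weyl}, with the Weyl tensor $W_{ijkl}$ arising from the Ricci decomposition of $R_{ijkl}$. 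In piece (ii), the derivative $T_{ijk,l}$ survives and combines with algebraic contributions from $T\cdot T$, mixed terms $\bar t\cdot T$, the square $\bar t\otimes\bar t$, and the Ricci tensor to yield exactly \eqref{eq:SIC:V:differential}, with the tensor $Z_{ij}$ defined in \eqref{eq:Z} collecting all the algebraic source terms. Finally, piece (iii) is shown to be a consequence of (i) and (ii) together with \eqref{eq:SIC:V:linear}, using in particular that the contraction over $(i,l)$ reproduces \eqref{eq:t+q}, which is already implied.

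The main obstacle will be the combinatorial bookkeeping of Young projections and the verification of the correct numerical coefficients. Specifically, one has to check carefully that the $\yng(2,2)$ projection of $T_{ijl,k}$ vanishes identically, that the cross terms mixing $\mathring T$ and $\bar t$ contribute only to the hook piece and to the traces, and that the normalisations $\tfrac18$ in \eqref{eq:SIC:V:Weyl} and $\tfrac{2}{n-2}$ in \eqref{eq:SIC:V:differential} emerge correctly from the projector combinatorics and the contraction $g^{ab}g_{ab}=n$. Once the irreducible decomposition of $X_{ijkl}$ is established, the equivalence is immediate, since vanishing of a tensor equation is equivalent to vanishing of each of its irreducible $S_4$-components.
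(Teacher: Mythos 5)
Your overall strategy coincides with the paper's: both arguments decompose the second integrability condition into irreducible Young-symmetry components, identify the trace-free $\yng(2,2)$ part with \eqref{eq:SIC:V:Weyl} and the $\yng(3,1)$ part with \eqref{eq:SIC:V:differential}, and your observation that $\nabla\mathring T$ lies in $\yng(4)\oplus\yng(3,1)$ (so that the Weyl condition is purely algebraic) is exactly the mechanism at work. There is, however, one concrete gap: your enumeration of the components of $X_{ijkl}$ as ``$\yng(2,2)$ plus $\yng(3,1)$ plus traces'' omits a symmetry class that is present a priori. A tensor symmetric in $(i,j)$ decomposes, by Littlewood-Richardson, as $\yng(2)\otimes\yng(1)\otimes\yng(1)\cong\yng(4)\oplus2\,\yng(3,1)\oplus\yng(2,2)\oplus\yng(2,1,1)$; after antisymmetrising in $(j,k)$ the $\yng(4)$ part dies, but the $\yng(2,1,1)$ part survives in general. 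If that component of \eqref{eq:SIC:V:quadratic} did not vanish, the second integrability condition would impose a constraint beyond \eqref{eq:SIC:V:Weyl} and \eqref{eq:SIC:V:differential}, and the claimed equivalence would fail. Showing that it does vanish is precisely the one substantive computation in the paper's proof.

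The paper handles this by first splitting off the $(i,l)$-symmetric (Weyl) part, rewriting the remainder as \eqref{eq:SIC:V:Ricci:bis}, decomposing the projector $\young(ij)\young(k,l)$ into its $\yng(3,1)$ and $\yng(2,1,1)$ pieces, and checking by direct computation that the $\yng(2,1,1)$ projection of $T_{ijk,l}+\tfrac2{n-2}g_{ik}Z_{jl}$ vanishes identically as a consequence of the first integrability condition \eqref{eq:SIC:V:linear}: total symmetry of $\mathring T$ reduces the three-index antisymmetrisation of $\nabla T$ to pure-trace terms of the form $g_{ij}t_{kl}$, and these, together with the $g_{ik}Z_{jl}$ contributions, are killed by the column antisymmetriser because $t_{kl}$ and $Z_{jl}$ are symmetric. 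Note that this is not covered by your remark that the metric-proportional part of $T$ ``contributes only to traces'': a product of $g$ with a symmetric $2$-tensor has nonzero projections onto several $S_4$ symmetry classes, so the cancellation must be verified, not assumed. Your plan should include this step; the rest of your sketch (the $(i,l)$ contraction reproducing \eqref{eq:t+q}, the coefficient bookkeeping) is consistent with the paper and the final appeal to the irreducible decomposition is then sound.
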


\begin{proof}
	Equation~\eqref{eq:SIC:V:Weyl} follows directly by
	symmetrising~\eqref{eq:SIC:V:quadratic} in $(i,l)$.
	Antisymmetrising instead, we obtain
	\begin{equation}
		\label{eq:SIC:V:Ricci:bis}
		\young(ij)\young(k,l)
		\left(
			T_{ijk,l}
			+\frac2{n-2}
			g_{ik}Z_{jl}
		\right)
		=0
	\end{equation}
	with $Z_{ij}$ given by \eqref{eq:Z}.  The symmetriser can be written as
	\[
		\frac12\young(ij)\cdot\frac12\young(k,l)
		=
		\frac1{96}
		{\young(kji,l)}^*
		 \young(kji,l)
		+
		\frac1{96}
		 \young(ji,k,l)
		{\young(ji,k,l)}^*,
	\]
	making explicit the projectors in the Littlewood-Richardson rule
	\[
		\yng(2)\otimes\yng(1,1)
		\cong
		\yng(3,1)\oplus\yng(2,1,1).
	\]
	The last component vanishes, because
	\[
		{\young(ji,k,l)}^*\!\!
		\left(
			T_{ijk,l}
			+\frac2{n-2}
			g_{ik}Z_{jl}
		\right)
		=2
		\young(j,k,l)
		\left(
			-g_{ij}t_{kl}
			+\frac1{n-2}
			(
				 g_{ik}Z_{jl}
				+g_{jk}Z_{il}
			)
		\right)
		=0,
	\]
	where we have used \eqref{eq:SIC:V:linear}.  The equivalence of
	\eqref{eq:SIC:V:differential} and \eqref{eq:SIC:V:Ricci:bis} now follows
	from the fact that $PP^*P$ is proportional to $P$ for any Young projector~$P$.
\end{proof}

\begin{lemma}
	The integrability condition~\eqref{eq:SIC:V:differential} implies
	\begin{equation}
		\label{eq:SIC:integrability}
		\young(ij)\young(k,l,m)
		\left(
			{R^a}_{ilm}T_{ajk}
			+\frac2{n-2}g_{ik}Z_{jl,m}
		\right)
		= 0.
	\end{equation}
\end{lemma}

\begin{proof}
	Differentiation and antisymmetrisation of \eqref{eq:SIC:V:Ricci:bis} yields
	\[
		\young(ij)
		\young(k,l,m)
		\left(
			T_{ijk,lm}
			+\frac2{n-2}g_{ik}Z_{jl,m}
		\right)
		=0.
	\]
	The statement now follows from applying the Ricci identity to the first
	term and using the Bianchi identity.
\end{proof}

\subsubsection{The 3\textsuperscript{rd} integrability condition}

\begin{proposition}
	\label{prop:SIC:V:cubic}
	The third of the integrability conditions~\eqref{eq:SIC:V} is redundant.
\end{proposition}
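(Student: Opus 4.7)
The goal is to show that~\eqref{eq:SIC:V:cubic} is a consequence of the 1st and 2nd integrability conditions. In the spirit of the derivation of the redundant relation~\eqref{eq:t+q} --- which arises as a pure contraction (over $(i,l)$) of~\eqref{eq:SIC:V:quadratic}, as already noted in the proof of Proposition~\ref{prop:SIC:V:quadratic} --- I would derive~\eqref{eq:SIC:V:cubic} as a \emph{differential} consequence of the 2nd IC, by first differentiating and then suitably contracting.

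Concretely, I would apply a covariant derivative $\nabla_m$ to~\eqref{eq:SIC:V:quadratic} and expand $Q_{ijkl,m}$ using the definition~\eqref{eq:Q}. This introduces a mixed second derivative $T_{ijl,km}$, which I swap via the Ricci identity, producing curvature-times-$T$ corrections; the quadratic-in-$T$ piece brings in one further derivative of $T$, and the derivative of the Riemann tensor is reorganised via the second Bianchi identity. I would then contract the resulting five-index identity with $g^{il}$, so that the $Q$-piece collapses into a combination of $q_k{}^n{}_{,m}$ (the leading derivative), algebraic $T\cdot q$-terms arising from $T_{ij}{}^p T_{pkl}$, and Ricci-type curvature corrections that can be absorbed using the decomposition~\eqref{eq:St2T} of Proposition~\ref{prop:SIC:V:linear} together with the definitions of $q$ and $t$. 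After antisymmetrising in the two derivative indices $(k,m)$ and discarding the already-known symmetric redundancy~\eqref{eq:t+q}, the net identity should match~\eqref{eq:SIC:V:cubic} exactly.

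The main obstacle is the tensorial bookkeeping: the 2nd IC carries a $\young(j,k)$-antisymmetrisation while the target~\eqref{eq:SIC:V:cubic} is antisymmetric in the two derivative indices, so the contraction and the second Bianchi identity must jointly shift antisymmetrisation between these pairs, and all the curvature corrections must recombine cleanly. A useful sanity check which also explains the redundancy conceptually: by Definition~\ref{def:non-degenerate}, any expression of the form $A_{kl}{}^n V_{,n} + B_{kl}\Delta V = 0$ holding for all prolongation solutions $V$ forces $A=B=0$; once~\eqref{eq:SIC:V:quadratic} is imposed, the prolonged Hessian of $V$ is consistent and $\Delta V=g^{ij}V_{,ij}$ is a bona fide scalar, so $(\Delta V)_{,[kl]}=0$ automatically. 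Expanding this vanishing by means of~\eqref{eq:prolongation:V} reproduces~\eqref{eq:SIC:V:cubic} contracted with $V_{,n}$ together with~\eqref{eq:t+q} multiplied by $\Delta V$, confirming on structural grounds that~\eqref{eq:SIC:V:cubic} cannot provide any new information beyond~\eqref{eq:SIC:V:quadratic}.
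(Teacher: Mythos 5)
Your main computational route is essentially the paper's: the paper likewise proves redundancy by exhibiting~\eqref{eq:SIC:V:cubic} as a differential consequence of the second integrability condition (via its equivalent form~\eqref{eq:SIC:V:differential}, the differentiated consequence~\eqref{eq:SIC:integrability}, the Ricci identity for $t_k$, and the trace decomposition supplied by the first condition), and it likewise leaves the verification at the level of ``a straightforward computation confirms''. One caveat: your concluding ``sanity check'' is circular as a proof, because the assertion that imposing~\eqref{eq:SIC:V:quadratic} alone already guarantees a consistent $(n+2)$-dimensional solution space --- which is what licenses passing from $A\indices{_{kl}^n}V_{,n}+B_{kl}\Delta V=0$ to $A=B=0$ --- is precisely the content of the redundancy being proved, so only the direct tensor computation actually establishes the proposition.
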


\begin{proof}
	A straightforward computation confirms that, up to trace terms,
	Equation~\eqref{eq:SIC:V:cubic} is a linear combination of the contraction
	of~\eqref{eq:SIC:V:differential} and~\eqref{eq:SIC:integrability}, if we
	take into account the Ricci identity for $t_k$. Similarly, the trace
	of~\eqref{eq:SIC:V:cubic} is a linear combination
	of~\eqref{eq:SIC:V:differential}, contracted with $\mathring T_{ijk}$,
	again taking into account the Ricci identity for~$t_k$.  This proves the
	claim.
\end{proof}

\section{Superintegrable Killing tensors}
\label{sec:K}

\subsection{Prolongation of a superintegrable Killing tensor}
\label{sec:prolongation.Killing.tensors}

For arbitrary second-order Killing tensors $K_{ij}$ it is well known that
all higher covariant derivatives are determined by the derivatives up to
second order.  More precisely, an explicit but complicated expression for
$K_{ij,klm}$ can be given which is linear in $K_{ij}$ and $K_{ij,k}$, the
coefficients being linear in the Riemannian curvature tensor and its
derivative \cite{Wolf98}, see also \cite{Gover&Leistner}.  Symbolically,
\begin{equation}
	\label{eq:prolongation:K:full}
	\nabla^3K=(\nabla R)\boxtimes K+R\boxtimes(\nabla K),
\end{equation}
where ``$\boxtimes$'' is a placeholder for some complicated bilinear
operation.  This defines the \emph{standard prolongation} of the Killing
equation.

The following proposition shows that a Killing tensor which arises from a
superintegrable system satisfies another, much simpler prolongation, namely
that all its covariant derivatives are already determined by the Killing
tensor itself. Symbolically:
\[
	\nabla K=T\boxtimes K.
\]
This generalizes, to arbitrary dimension, equations found in \cite{KKM05c} for
dimension three.

\begin{proposition}
	\label{prop:prolongation:K}
	A Killing tensor in a non-degenerate superintegrable system with structure
	tensor $T_{ijk}$ satisfies
	\begin{equation}
		\label{eq:prolongation:K}
		K_{ij,k}=\frac13\young(ji,k)T\indices{^a_{ji}}K_{ak}.
	\end{equation}
\end{proposition}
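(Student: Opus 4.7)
My plan is to eliminate $V$ from the Bertrand-Darboux condition~\eqref{eq:dKdV:ij} via the Wilczynski equation~\eqref{eq:Wilczynski}, use non-degeneracy to strip away $\nabla V$, and then combine the resulting tensorial identity with the Killing equation to solve for $K_{ij,k}$.

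Substituting~\eqref{eq:Wilczynski} into~\eqref{eq:dKdV:ij}, the $\Delta V$ contribution is proportional to $\young(i,j)\,K\indices{^m_i}g_{jm}\,\Delta V=\young(i,j)K_{ij}\,\Delta V$, which vanishes by the symmetry of $K$. Only terms linear in $\nabla V$ remain. By Definition~\ref{def:non-degenerate}, $V_{,m}$ can be prescribed freely at a regular point, so its coefficient must vanish identically. Setting $B_{ijk}:=K_{ij,k}$, two convenient relabellings of this identity will yield
\begin{align*}
	B_{ijk}-B_{ikj} &= K_{ak}T\indices{^a_{ji}} - K_{aj}T\indices{^a_{ki}}, \\
	B_{ijk}-B_{jki} &= K_{ak}T\indices{^a_{ij}} - K_{ai}T\indices{^a_{kj}}.
\end{align*}

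The symmetry of $K$ makes $B_{ijk}$ symmetric in $(i,j)$, and the Killing equation yields the cyclic identity $B_{ijk}+B_{jki}+B_{kij}=0$; together these place $B_{ijk}$ in the hook component $\yng(2,1)$ of $\yng(2)\otimes\yng(1)$. Summing the two displayed identities and using the Killing relation to rewrite $-B_{ikj}-B_{jki}$ as $B_{ijk}$, the left-hand side becomes $3B_{ijk}$, while the right-hand side is precisely the expansion of $\young(ji,k)\,T\indices{^a_{ji}}K_{ak}$ (antisymmetrise in the column indices $(j,k)$, then symmetrise in the row indices $(j,i)$). The factor $\tfrac13$ in~\eqref{eq:prolongation:K} thus corresponds to $\young(ji,k)$ acting as multiplication by $3$ on the hook representation.

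No serious obstacle is anticipated; the argument is a short sequence of algebraic manipulations once non-degeneracy has furnished the basic relation from Bertrand-Darboux. The main point requiring attention is that the structure tensor is only symmetric in its first two indices, so $T\indices{^a_{ji}}$ and $T\indices{^a_{ij}}$ must be tracked separately throughout the computation; both appear with equal weight in the final Young-projected expression, and conflating them would produce a wrong formula.
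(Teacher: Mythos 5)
Your proposal is correct and follows essentially the same route as the paper: substitute the Wilczynski equation into the Bertrand–Darboux condition, note that the $\Delta V$ term drops by symmetry of $K$, use non-degeneracy to kill the coefficient of $\nabla V$, and then combine the resulting antisymmetrised identity with the Killing equation. Your explicit summation of the two relabelled copies and the cyclic identity is exactly the unpacked form of the paper's one-line observation that the hook projector $\young(ji,k)$ acts as multiplication by $3$ on $K_{ij,k}$.
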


\begin{proof}
	Substituting \eqref{eq:Wilczynski} into \eqref{eq:dKdV:ij} gives
	\[
		\young(j,k)
		\bigl(
			K\indices{^a_{j,k}}
			-T\indices{_{jb}^a}K\indices{^b_k}
		\bigr)V_{,a}=0.
	\]
	From the definition of non-degeneracy it then follows that
	\begin{equation}
		\label{eq:shortcut:1,1}
		\young(j,k)K_{ij,k}=
		\young(j,k)T\indices{^a_{ji}}K_{ak}.
	\end{equation}
	On the other hand, the Killing equation \eqref{eq:Killing} implies that
	\[
		K_{ij,k}=\frac13\young(ji,k)K_{ij,k}.
	\]
	Combining the last two equations proves~\eqref{eq:prolongation:K}.
\end{proof}

\begin{lemma}
	Any Killing tensor satisfies the following identity
	\begin{equation}
		\label{eq:noname}
		\young(il)\young(j,k)
		\left(
			K_{ij,kl}
			+\young(jl)R\indices{^a_{ijk}}K_{al}
		\right)
		=0.
	\end{equation}
\end{lemma}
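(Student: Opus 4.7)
The plan is to derive the identity as a purely algebraic consequence of the Killing equation and the Ricci identity for commuting covariant derivatives; no feature of superintegrability is used.

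First I would differentiate the Killing equation $K_{(ij,k)}=0$ once covariantly to obtain
\[
K_{ij,kl}+K_{jk,il}+K_{ki,jl}=0 \qquad (\star).
\]
To expose the curvature contribution, I would then commute the outer pair of derivatives in $K_{ij,kl}$ via the Ricci identity
\[
K_{ab,cd}-K_{ab,dc}=R\indices{^e_{acd}}K_{eb}+R\indices{^e_{bcd}}K_{ae},
\]
and re-apply the Killing equation to the intermediate $K_{ij,l}=-K_{jl,i}-K_{li,j}$, producing
\[
K_{ij,kl}=-K_{jl,ik}-K_{il,jk}+R\indices{^a_{ikl}}K_{aj}+R\indices{^a_{jkl}}K_{ai}.
\]

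Next I would apply the Young projector $\young(il)\young(j,k)$ to this identity. A second application of the Ricci identity on $K_{il,jk}$---now to swap its outer derivatives $(j,k)$---converts the contribution of $-K_{il,jk}$ directly into a Riemann tensor term of the form $R\indices{^a_{ijk}}K_{al}+R\indices{^a_{ljk}}K_{ai}$. The contribution of $-K_{jl,ik}$, after further Ricci-identity corrections and use of the symmetry of $K$, combines with $(\star)$ to reduce (modulo curvature) back to $\young(il)\young(j,k)K_{ij,kl}$. Collecting everything isolates $\young(il)\young(j,k)K_{ij,kl}$ as minus an expression involving only contractions of the Riemann tensor with $K$.

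Finally I would reorganize those surviving curvature contributions using the antisymmetry of $R$ in its last two indices together with the first Bianchi identity, so as to present them as the $\young(il)\young(j,k)$-projection of $-\young(jl)R\indices{^a_{ijk}}K_{al}$, which is exactly the claim.

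The main obstacle is index bookkeeping in the last two steps: Riemann tensor terms with several different index configurations (such as $R\indices{^a_{ikl}}K_{aj}$, $R\indices{^a_{jkl}}K_{ai}$, and $R\indices{^a_{ljk}}K_{ai}$) arise along the way, and showing that their $\young(il)\young(j,k)$-projections collapse exactly into the canonical form $\young(jl)R\indices{^a_{ijk}}K_{al}$---leaving no residual term in a different Young component of $R\otimes K$---requires careful use of the Bianchi identity together with the pair symmetries of the Riemann tensor.
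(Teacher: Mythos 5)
Your overall route is the same as the paper's: commute the derivative indices with the Ricci identity, use the differentiated Killing equation under the $(i,l)$-symmetriser, apply the Ricci identity again under the $(j,k)$-antisymmetriser, and finish with the antisymmetrised first Bianchi identity. The opening steps and the closing Bianchi step are correct. The problem is the middle step, where you treat the term $-K_{jl,ik}$. You propose to use $(\star)$ to rewrite its contribution ``back to $\young(il)\young(j,k)K_{ij,kl}$ modulo curvature'' and then to ``isolate'' that quantity. But $(\star)$ applied to $K_{jl,ik}$ is precisely the identity $K_{ij,lk}+K_{jl,ik}+K_{li,jk}=0$ that produced the term $-K_{jl,ik}$ one line earlier; substituting it back reintroduces $K_{ij,kl}$ with coefficient $+1$ and the computation telescopes to the tautology $\young(il)\young(j,k)K_{ij,kl}=\young(il)\young(j,k)K_{ij,kl}$. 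There is no relation of the form $L=cL+(\text{curvature})$ with $c\neq1$ to solve; in fact the correct coefficient of $L$ in the projected $-K_{jl,ik}$ term is $0$, not $1$, so ``collecting everything'' along your stated plan would yield either $0=0$ or $0=(\text{curvature})$, neither of which isolates the desired quantity.

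The repair is small and is exactly what the paper does: do not re-expand $-K_{jl,ik}$ via $(\star)$, but observe that under the symmetriser alone
\[
	\young(il)\bigl(-K_{jl,ik}-K_{il,jk}\bigr)
	=-\bigl(K_{jl,ik}+K_{ij,lk}+K_{li,jk}\bigr)-K_{il,jk}
	=-K_{il,jk},
\]
again by the differentiated Killing equation (equivalently, $\young(il)K_{ij,lk}=-K_{il,jk}$). The single surviving second-derivative term $-K_{il,jk}$ is then converted into pure curvature by the $(j,k)$-antisymmetriser via the Ricci identity, so that $\young(il)\young(j,k)K_{ij,kl}$ equals a curvature expression outright, with nothing left to isolate. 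From there your final step is sound: the terms $R\indices{^a_{ikl}}K_{aj}$, $R\indices{^a_{jkl}}K_{ai}$ and $R\indices{^a_{ijk}}K_{al}$ are reorganised into $-\young(jl)R\indices{^a_{ijk}}K_{al}$ using $\young(j,k)R\indices{^a_{jki}}=-\tfrac12\young(j,k)R\indices{^a_{ijk}}$, which is how the paper concludes as well.
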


\begin{proof}
	Using the identities
	\begin{align*}
		\young(jk)K_{ij,k}&=-\frac12\young(jk)K_{jk,i}&
		\young(j,k)R\indices{^a_{jki}}&=-\frac12\young(j,k)R\indices{^a_{ijk}},
	\end{align*}
	which follow from a symmetrisation of the Killing equation respectively an
	antisymmetrisation of the Bianchi identity, we have
	\begin{align*}
		\young(il)\young(j,k)
		K_{ij,kl}
		&=
		\young(il)\young(j,k)
		\left(
			K_{ij,lk}
			+\young(k,l)K_{ij,kl}
		\right)\\
		&=
		\young(il)\young(j,k)
		\left(
			-\frac12K_{il,jk}
			+\young(ij)R\indices{^a_{ikl}}K_{aj}
		\right)\\
		&=
		\young(il)\young(j,k)
		\left(
			-\frac12R\indices{^a_{ijk}}K_{al}
			+R\indices{^a_{ikl}}K_{aj}
			+R\indices{^a_{jkl}}K_{ai}
		\right)\\
		&=
		\young(il)\young(j,k)
		\left(
			 R\indices{^a_{ikl}}K_{aj}
			-R\indices{^a_{ijk}}K_{al}
		\right)
	\end{align*}
\end{proof}

\begin{lemma}
	\label{lemma:pointwise}
	Suppose the values of the Killing tensors and of the structure tensors of
	two non-degenerate superintegrable systems coincide in a fixed point.
	Then the values of the covariant derivatives of the structure tensors also
	coincide in this point.
\end{lemma}

\begin{proof}
	Substituting the derivative of \eqref{eq:shortcut:1,1},
	\[
		\young(j,k)K_{ij,kl}
		=
		\young(j,k)
		\left(
			 T\indices{^a_{ji,l}}K_{ak  }
			+T\indices{^a_{ji  }}K_{ak,l}
		\right),
	\]
	together with \eqref{eq:prolongation:K} into \eqref{eq:noname} yields
	\begin{equation}
		\label{eqn:konrads.gleichung}
		\young(il)\young(j,k)
		\left(
			T\indices{^a_{ji,l}}K_{ak}
			+\frac13T\indices{^a_{ji}}\young(ka,l)T\indices{^b_{ka}}K_{bl}
			+\young(jl)R\indices{^a_{ijk}}K_{al}
		\right)
		=0.
	\end{equation}
	Now suppose we have two structure tensors with the same values in a fixed
	point~$x_0$.  Denote their difference by $\delta T_{ijk}$.  Then the
	difference of the two copies of the above equation at~$x_0$, obtained for
	each of the structure tensors, is
	\[
		\young(il)\young(j,k)
		\delta T\indices{^a_{ji,l}}(x_0)K_{ak}(x_0)
		=0.
	\]
	This equation is satisfied by all Killing tensors in the superintegrable
	system.  Observe that, for fixed $i$ and $l$, the left hand side is a
	commutator between $\delta T\indices{^a_{ji,l}}(x_0)$
	and~$K\indices{^a_k}(x_0)$.  Hence, if the system is irreducible, we have
	\[
		\young(il)\delta T\indices{^a_{ji,l}}(x_0)=g^a_j\Lambda_{il}
	\]
	for some symmetric tensor $\Lambda_{il}$.  Contracting $a$ and $j$ shows that
	$\Lambda_{il}=0$ and hence
	\[
		\young(il)\delta T\indices{^a_{ji,l}}(x_0)=0.
	\]
	On the other hand, by~\eqref{eq:SIC:V:Ricci:bis},
	\[
		\young(i,l)\delta T\indices{^a_{ji,l}}(x_0)=0,
	\]
	implying $\delta T\indices{^a_{ji,l}}(x_0)=0$.  This shows that both
	structure tensors must have the same derivatives at $x_0$.
\end{proof}

\begin{proposition}
	\label{prop:uniqueness}
	Suppose the values of the Killing tensors and of the structure tensors of
	two non-degenerate superintegrable systems coincide in a fixed point.
	Then the two systems have the same Killing tensors and the same structure
	tensor.
\end{proposition}

\begin{proof}
	Under the hypothesis of the proposition we conclude that, at the fixed
	point, there also coincide the following: the values of the first
	derivatives of the Killing tensors by \eqref{eq:prolongation:K}, of the
	derivatives of the structure tensor by the preceding lemma and of the
	second derivatives of the Killing tensors by the identity
	\begin{equation}
		\label{eq:K''}
		K_{ij,kl}
		=
		\frac13\young(ji,k)
		\left(
		T\indices{^a_{ji,l}}K_{ak}
		+\frac13\young(kb,l)T\indices{^b_{ji}}T\indices{^a_{kb}}K_{al}
		\right),
	\end{equation}
	obtained from substituting \eqref{eq:prolongation:K} into its own
	derivative.  Since the values of a Killing tensor and its first and second
	derivatives in a single point uniquely determine this tensor in a
	neighbourhood of this point, the Killing tensors of both systems coincide.
	By definition, their structure tensors then coincide as well.
\end{proof}

\subsection{Integrability conditions for a superintegrable Killing tensor}

The integrability conditions for the standard prolongation of a Killing
tensor, Equation~\eqref{eq:prolongation:K:full}, are
\begin{multline*}
	K_{ij,kl}-K_{kl,ij}
	=\tfrac12\young(ij)\young(kl)\young(ik)K_{im}R\indices{^m_{klj}}\\
	+\young(ij)K_{im}R\indices{^m_{jkl}}
	-\young(kl)K_{km}R\indices{^m_{lij}}
\end{multline*}
and a very lengthy expression of the form
\[
	(\nabla^2R+R\boxtimes R)\boxtimes K+
	(\nabla R)\boxtimes(\nabla K)+
	R\boxtimes(\nabla^2K)
	=0,
\]
involving (even in low dimension) several hundreds of terms \cite{Wolf98}, see
also \cite{Gover&Leistner}.  In contrast, the following proposition shows that
the integrability conditions for the prolongation of a superintegrable Killing
tensor, Equation~\eqref{eq:prolongation:K}, are much simpler.

\begin{proposition}
	\label{prop:SIC:K}
	The following are necessary and sufficient conditions for the existence
	and uniqueness of a solution $K_{ij}$ of the prolongation equation
	\eqref{eq:prolongation:K}, given the values of $K_{ij}$ in a fixed point
	$x_0\in M$:
	\begin{equation}
		\label{eq:SIC:K}
		\young(k,l)
		\Bigl(
			P\indices{_{ijk}^{ab}_{,l}}
			+P\indices{_{ijk}^{pq}}P\indices{_{pql}^{ab}}
			-
			\frac12
			\young(ij)
			g^a_iR\indices{^b_{jkl}}
		\Bigr)
		K_{ab}
		=0,
	\end{equation}
	where
	\[
		P\indices{_{ijk}^{ab}}
		:=\frac16\young(ab)\young(ji,k)g^a_kT\indices{^b_{ji}}.
	\]
\end{proposition}

\begin{proof}
	Equation \eqref{eq:prolongation:K} can be used to express all higher
	derivatives of the Killing tensor linearly in the Killing tensor itself.
	Explicitly, writing \eqref{eq:prolongation:K} as
	\begin{equation}
		\label{eq:prolongation:K:bis}
		K_{ij,k}=P\indices{_{ijk}^{ab}}K_{ab}
	\end{equation}
	and substituting it back into its own derivative, yields
	\[
		K_{ij,kl}
		=P\indices{_{ijk}^{ab}_{,l}}K_{ab}
		+P\indices{_{ijk}^{cd}}K_{cd,l}
		=
		\left(
			 P\indices{_{ijk}^{ab}_{,l}}
			 +P\indices{_{ijk}^{cd}}P\indices{_{cdl}^{ab}}
		\right)
		K_{ab}.
	\]
	This expression must satisfy the Ricci identity
	\[
		\young(k,l)K_{ij,kl}=\young(ij)R\indices{^a_{ikl}}K_{aj},
	\]
	which is the integrability condition for the existence of a local solution
	to \eqref{eq:prolongation:K}.
\end{proof}

The following definition plays the same role for superintegrable Killing
tensors as that of non-degeneracy for superintegrable potentials:  It assures
that the integrability conditions~\eqref{eq:SIC:K} are generically satisfied,
that is independently of the Killing tensors.

\begin{definition}
	We call a non-degenerate superintegrable system \emph{abundant} if
	Equation~\eqref{eq:prolongation:K} has $n(n+1)/2$ linearly independent
	solutions.
\end{definition}

The following is a specification of Proposition~\ref{prop:SIC:K} for abundant
systems.

\begin{corollary}
	\label{cor:abundant}
	The following are necessary and sufficient conditions that the space of
	solutions $K_{ij}$ to the prolongation equation \eqref{eq:prolongation:K}
	assumes the maximal dimension $n(n+1)/2$:
	\begin{equation}
		\label{eq:SIC:K:generic}
		\young(mn)
		\young(k,l)
		\Bigl(
			P\indices{_{ijk}^{mn}_{,l}}
			+P\indices{_{ijk}^{pq}}P\indices{_{pql}^{mn}}
			-
			\frac12
			\young(ij)
			g^m_iR\indices{^n_{jkl}}
		\Bigr)
		=0
	\end{equation}
	\qed
\end{corollary}

\begin{remark}
	In dimension two, every superintegrable system is trivially abundant,
	since $2n-1=n(n+1)/2$ for $n=2$.  For $n=3$ we have $2n-1=5$ and
	$n(n+1)/2=6$.  The so-called ``$5\Rightarrow6$ Lemma'' states that every
	non-degenerate second-order maximally superintegrable system on a
	conformally flat manifold of dimension three is abundant \cite{KKM05c}.
\end{remark}

\subsection{Non-linear prolongation of the structure tensor}

\begin{proposition}
	\label{prop:prolongation:T}
	The generic integrability conditions \eqref{eq:SIC:K:generic} for an
	abundant superintegrable system are equivalent, in dimensions $n\geq3$,
	to the following polynomial expressions for the derivatives of the
	structure tensor,
	\begin{subequations}
		\label{eq:prolongation:T}
		\begin{align}
			T_{ijk,l}
				&=\mathring T_{ijk,l}
				+\young(ij)\Bigl(\bar t_{i,l}g_{jk}-\frac 1ng_{ij}\bar t_{k,l}\Bigr)\\
			\label{eq:DT:DS}
			\mathring{T}_{ijk,l}
				&=\frac1{18}{\young(ijk)}_\circ
				\bigg[
					\mathring{T}\indices{_{ij}^a}\mathring{T}_{kla}
					+ \mathring{T}_{ijk}\bar t_l
					+3\,\mathring{T}_{ijl}\bar t_k
					\notag \\
				&\qquad\qquad\qquad\qquad +
					\left(
						\frac4{n-2}\,\mathring{T}\indices{_i^{ab}}\mathring{T}_{jab}
						-3\,\mathring{T}\indices{_{ij}^a}\bar t_a
					\right)g_{kl}
				\bigg]\\
			\label{eq:DT:Dt}
			\bar t_{k,l}
				&=\frac13
				  \left(
					-\frac2{n-2}\mathring{T}\indices{_k^{ab}}\mathring{T}_{lab}
					+3\mathring{T}\indices{_{kl}^a}\bar t_a
					+4\bar t_k\bar t_l
				  \right)_\circ
				  \notag \\
				&\qquad
					+\frac1ng_{kl}\,\left(
						\frac{3n+2}{6(n+2)(n-1)}\,\mathring{T}^{abc}\mathring{T}_{abc}
						-\frac{n-2}6\,\bar t^a\bar t_a
						+\frac{3}{2(n-1)}\,R
					\right)
		\end{align}
	\end{subequations}
	together with the polynomial equations
	\begin{subequations}
		\label{eq:SIC:K:1st}
		\begin{align}
			\label{eq:SIC:K:Weyl}
			\frac18{\young(ik,jl)}^*_\circ T\indices{^a_{ik}}T_{ajl}
			&=W_{ijkl}=0\\
			\label{eq:SIC:K:Ricci}
			-\frac14\mathring Z_{ij}
			&=\mathring R_{ij}\,.
		\end{align}
	\end{subequations}
	Here ``$\circ$'' denotes the trace-free part, $W_{ijkl}$ and $R_{ij}$ are
	the Weyl respectively the Ricci tensor of the Riemannian manifold and
	$Z_{ij}$ is defined in \eqref{eq:Z}.
\end{proposition}

\begin{proof}
	The generic integrability conditions \eqref{eq:SIC:K:generic} are linear
	in the derivatives of the structure tensor.  They can be solved for these
	derivatives, which yields \eqref{eq:prolongation:T}.  Substituting
	\eqref{eq:prolongation:T} back into \eqref{eq:SIC:K:generic} yields
	\eqref{eq:SIC:K:1st}.
\end{proof}

\begin{corollary}
	Abundant superintegrable systems can only exist on Weyl flat manifolds.
\end{corollary}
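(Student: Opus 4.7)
The plan is to read the corollary off directly from Theorem~\ref{thm:derivatives}. By Lemma~\ref{lemma:abundant}, a superintegrable system is abundant precisely when it satisfies the generic integrability conditions~\eqref{eq:SIC:K:generic}. Theorem~\ref{thm:derivatives} recasts those conditions as the prolongation formulae~\eqref{eq:derivatives} together with the algebraic constraints~\eqref{eq:SIC:K:polynomials}, and among the latter Equation~\eqref{eq:SIC:K:Weyl} asserts that the Weyl tensor of the underlying manifold vanishes: $W_{ijkl}=0$.

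In dimension $n\geq 4$ this is the classical Weyl--Schouten criterion for local conformal flatness, and the corollary follows at once. In dimension $n=2$ every Riemannian manifold is locally conformally flat by the existence of isothermal coordinates, so the conclusion is vacuous.

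The only case requiring additional argument is $n=3$, where the Weyl tensor vanishes identically on any Riemannian $3$-manifold and conformal flatness is instead characterised by the vanishing of the Cotton tensor, i.e.\ by
\[
	\young(j,k)\mathring R_{ij,k}=0
\]
up to a universal normalisation. To handle this case, I would exploit the companion equation~\eqref{eq:SIC:K:Ricci}, which expresses $\mathring R_{ij}$ polynomially in the structure tensor as $-\tfrac14\mathring Z_{ij}$, with $Z_{ij}$ defined in~\eqref{eq:Z}. Differentiating this identity covariantly and substituting the prolongation formulae~\eqref{eq:derivatives} for $\nabla\mathring T$ and $\nabla\bar t$ yields $\nabla\mathring R$ as a polynomial in $T$ and $\bar t$ with no remaining undifferentiated curvature terms. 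Antisymmetrising the resulting expression in the pair $(j,k)$ and simplifying modulo the already established algebraic constraints should produce $C_{ijk}=0$.

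The principal obstacle is precisely this dimension-three bookkeeping step: the explicit form of $Z_{ij}$ in~\eqref{eq:Z} generates many tensorial terms under differentiation, and verifying the final cancellation requires careful use of the hook-symmetry identities for $T_{ijk}$ established in Proposition~\ref{prop:SIC:V:linear}, of the trace relations~\eqref{eq:T:components}, and of the algebraic Weyl-tensor identity~\eqref{eq:SIC:K:Weyl}. This is exactly the kind of combinatorial tensor calculation that naturally invites the computer-algebra support (\texttt{cadabra2}) already used elsewhere in the paper.
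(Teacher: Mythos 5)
Your proposal follows the same route as the paper: the corollary is stated without its own proof precisely because it is meant to be read off from Theorem~\ref{thm:derivatives}, whose Equation~\eqref{eq:SIC:K:Weyl} forces $W_{ijkl}=0$. For $n\geq 4$ your invocation of the Weyl--Schouten theorem is exactly the intended argument, so on the paper's own terms your proof is complete and correct.

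Where you go beyond the paper is in noticing that $W_{ijkl}=0$ is vacuous in dimension three, where conformal flatness is instead governed by the Cotton tensor. This is a genuine subtlety that the paper passes over in silence, so you have located a gap in the \emph{paper's} implicit argument rather than introduced one of your own. Your plan for closing it --- differentiate \eqref{eq:SIC:K:Ricci}, eliminate $\nabla\mathring T$ and $\nabla\bar t$ via \eqref{eq:derivatives}, antisymmetrise, and reduce modulo the algebraic constraints to obtain the vanishing of the Cotton tensor --- is the natural one and is consistent with how Lemma~\ref{lemma:integrability} is derived, but as you acknowledge it is only a sketch: the cancellation is asserted, not verified. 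Two mitigating remarks: first, in dimension three the statement is already known from the classical literature (the ``$5\Rightarrow 6$'' analysis of \cite{KKM05c} places non-degenerate systems on conformally flat $3$-manifolds), so the corollary is not at risk; second, the calculation you outline is of the same mechanical character as the ones the authors delegate to \texttt{cadabra2}, so deferring it is reasonable, though a fully self-contained proof of the corollary in all dimensions would require carrying it out.
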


\begin{proof}
	In dimension $n=2$ any manifold is Weyl flat. For dimensions $n\geq3$, the
	claim follows from the theorem above.
\end{proof}

\begin{corollary}
	\label{cor:SIC:K->V}
	The generic integrability conditions~\eqref{eq:SIC:K:generic} for a
	superintegrable Killing tensor imply the integrability
	conditions~\eqref{eq:SIC:V} for a superintegrable potential.
\end{corollary}

\begin{proof}
	This follows from Proposition~\ref{prop:prolongation:V} after the
	substitution of~\eqref{eq:prolongation:T}
	into~\eqref{eq:SIC:V:differential}.
\end{proof}

\subsection{Integrability conditions for the structure tensor}

We now change our point of view and consider \eqref{eq:prolongation:T} as a
system of partial differential equations for a tensor $T_{ijk}$ and ask for
necessary and sufficient conditions such a tensor has to satisfy in a single
point, in order to extend to the structure tensor of a superintegrable system.

Contrary to the prolongations~\eqref{eq:prolongation:V} for a superintegrable
potential and \eqref{eq:prolongation:K} for a Killing tensor in a
superintegrable system, the prolongation~\eqref{eq:prolongation:T} for the
structure tensor is \emph{non-linear} \cite{KLV86,Goldschmidt1967,BCG+13}:
Derivatives of $T_{ijk}$ are expressed as quadratic polynomials in~$T_{ijk}$.
In all three cases the prolongation equations allow to express all higher
derivatives of the prolonged tensors polynomially in these tensors.  But the
second derivatives are not independent.  They have to satisfy the Ricci
identity, which becomes a polynomial condition.  While this first order
condition is sufficient for the integrability of the prolongation equation in
the linear case, higher order integrability conditions arise in the non-linear
case:  Taking the covariant derivative of any polynomial condition and
replacing derivatives by the prolongation equation gives a new polynomial
condition.  This procedure terminates once the newly obtained condition lies
in the ideal generated algebraically by all the preceding ones.  The latter
then form a full set of (finitely many) algebraic integrability conditions for
the prolongation.%
\footnote{
	The same procedure was applied for flat two-dimensional superintegrable
	systems in \cite{KKM07b}.
}

\begin{remark}
	\label{rem:procedure}
	The situation we have here is slightly more general, as we seek a solution
	to the non-linear prolongation equation~\eqref{eq:prolongation:T} that
	also satisfies additional algebraic conditions~\eqref{eq:SIC:K:1st} which
	do not originate from the Ricci identity a priori.  Note that the latter
	are of degree two in the structure tensor while the first order
	integrability conditions are of third degree.  This can be achieved by
	adjoining them to the first order integrability conditions in the
	procedure described above.
\end{remark}

The following lemma gives the first order integrability conditions in the
present situation.

\begin{lemma}
	\label{la:first.order.integrability}
	Assuming \eqref{eq:SIC:K:1st}, the Ricci identity for
	\eqref{eq:prolongation:T} reduces to
	\begin{multline}
		\label{eq:SIC:K:2nd}
		\frac{n+2}{9n}
		\left(
			\mathring T^{abc}\mathring T_{abc}
			-(n+2)(n-1)\bar t^a\bar t_a
			-9R
		\right)
		\bar t_i\\
		=
		-R_{,i}
		+\frac{2(5n+2)}{3(n-2)}\,\mathring T\indices{_i^{ab}}\mathring R_{ab}
		-\frac{2(n^2-2n+8)}{3(n-2)}\,\mathring R_{ia}\bar t^a.
	\end{multline}
\end{lemma}

\begin{proof}
	The Ricci identity for $T_{ijk}$ reads
	\[
	   	\young(l,m)
	   	T_{ijk,lm}
	   	=R\indices{^a_{ilm}}T_{ajk}
	   	+R\indices{^a_{jlm}}T_{iak}
		+R\indices{^a_{klm}}T_{ija}
	\]
	and can be reduced, by the help of \eqref{eq:prolongation:T}, to a cubic
	polynomial in $T_{ijk}$ involving the curvature tensor.  The trace
	free part of this cubic vanishes by virtue of \eqref{eq:SIC:K:1st}, as do
	the trace-free parts of its two independent traces.  The traces of these
	traces are both equivalent to \eqref{eq:SIC:K:2nd} using
	\eqref{eq:SIC:K:1st}.
\end{proof}

In Section~\ref{sec:R=const} we will show that for abundant superintegrable
systems on constant curvature manifolds the above procedure terminates after
the first step.  This generalises the findings in \cite{KKM07b} and
\cite{CK14,Capel,KKM07c}, where sufficiency is checked for dimensions~2 and~3,
respectively.

\section{The variety of superintegrable systems}
\label{sec:variety}

By definition, a second-order maximally superintegrable system consists of a
$(2n-1)$-dimensional subspace in the space $\mathcal K(M)$ of second-order
Killing tensors on the base manifold $M$ together with a potential function
$V$ on $M$.  Forgetting the latter defines a canonical map
\begin{equation}
	\label{eq:projection}
	\Phi:\mathcal S\to G_{2n-1}\bigl(\mathcal K(M)\bigr)
\end{equation}
from the set $\mathcal S$ of non-degenerate second-order maximally
superintegrable systems on $M$ to the corresponding Grassmannian.

\begin{proposition}
	\label{prop:equations}
	Let $\mathcal I\subset\mathcal S$ be the subset of non-degenerate
	irreducible superintegrable systems.  Then $\Phi(\mathcal I)$ is the
	subset in $G_{2n-1}\bigl(\mathcal K(M)\bigr)$ consisting of spaces of
	Killing tensors satisfying the following equations:
	\begin{enumerate}
		\item
			\label{it:rank}
			the maximal rank condition
			\[
				G_r(A)\not=0
				\qquad\text{for}\quad
				r=r_\text{max}=\frac{(n-1)(n+2)}2
			\]
			on an open and dense subset.
		\item
			\label{it:V}
			the generic integrability conditions \eqref{eq:SIC:V} of the
			prolongation equation \eqref{eq:prolongation:V} for a
			superintegrable potential
		\item
			\label{it:K}
			the integrability conditions \eqref{eq:SIC:K} of the prolongation
			equation \eqref{eq:prolongation:K} for a Killing tensor in a
			superintegrable system
	\end{enumerate}
\end{proposition}

\begin{proof}
	In previous sections we have already seen that the elements in
	$\Phi(\mathcal I)$ satisfy all three conditions: \ref{it:rank} is a
	consequence of Lemma~\ref{lemma:irreducible} together with \eqref{eq:rank}
	and \eqref{eq:rank:max}, \ref{it:V} is shown in
	Proposition~\ref{prop:SIC:V} and \ref{it:K} in
	Proposition~\ref{prop:SIC:K}.

	Now suppose a subspace of Killing tensors satisfies all three conditions.
	Condition~\ref{it:rank} assures that the tensor $T_{ijk}$, given by the
	Moore-Penrose inverse \eqref{eq:Moore-Penrose}, is well-defined on an open
	and dense subset of $M$.  Condition~\ref{it:V} implies that we can
	integrate Equations~\eqref{eq:prolongation:V} to obtain a linear space
	of solutions $V$ of dimension $n+2$.  Condition~\ref{it:K} assures that
	the Killing tensors all satisfy Equation~\eqref{eq:prolongation:K}.
	Together, the equations~\eqref{eq:prolongation:V} and
	\eqref{eq:prolongation:K} imply the Bertrand-Darboux
	condition~\eqref{eq:dKdV}, assuring the existence of potential functions
	$V^{(\alpha)}$ for each Killing tensor $K^{(\alpha)}$ with $V^{(0)}=V$ and
	$K^{(0)}=g$.

	Note that the corresponding integrals
	$F^{(\alpha)}=K^{(\alpha)}+V^{(\alpha)}$ define a superintegrable system
	only if they are functionally independent.  The following lemma shows that
	generically this is the case.  Therefore, for every space of Killing
	tensors satisfying all three conditions we actually find a superintegrable
	system which, by construction, is irreducible and non-degenerate and
	projects to this space under~$\Phi$.
\end{proof}

\begin{lemma}
	\label{lemma:functional_dependence}
	Let $K^{(\alpha)}$ be $2n-1$ linearly independent Killing tensors
	(including the metric) satisfying the integrability conditions
	\eqref{eq:SIC:K:generic} for \eqref{eq:prolongation:K}, and
	\eqref{eq:SIC:V} for~\eqref{eq:prolongation:V}.  Then, in the linear space
	of solutions $V$ to Equation~\eqref{eq:prolongation:V}, those $V$ defining
	functionally dependent integrals are confined to an affine subspace with
	non-empty complement.
\end{lemma}

\begin{proof}
	Suppose the first integrals \eqref{eq:quadratic} are functionally
	dependent.  This means there exists a function $\varphi:\mathbb
	R^{2n-1}\to\mathbb R$, non-zero on an open subset, such that
	\[
		\varphi(F^{(0)},\ldots,F^{(2n-2)})=0.
	\]
	Infinitesimally, this condition reads
	\[
		\sum_{\alpha=0}^{2(n-1)}
		\lambda_{(\alpha)} dF^{(\alpha)}
		=0,
	\]
	with
	\begin{align*}
		\lambda_{(\alpha)}(\mathbf p,\mathbf x)
		&=\frac{\partial\varphi}{\partial F^{(\alpha)}}
		(F^{(\alpha)}(\mathbf p,\mathbf x))&
		dF^{(\alpha)}
		&
		=\frac{\partial F^{(\alpha)}}{\partial x^k}dx^k
		+\frac{\partial F^{(\alpha)}}{\partial p^k}dp^k,
	\end{align*}
	where
	\begin{align*}
		\frac{\partial F^{(\alpha)}}{\partial x^k}&= K^{(\alpha)}_{ij,k}p^ip^j+V^{(\alpha)}_{,k}&
		\frac{\partial F^{(\alpha)}}{\partial p^k}&=2K^{(\alpha)}_{jk}p^j.
	\end{align*}
	Using \eqref{eq:potentials}, this can be written as
	\begin{subequations}\label{eq:independence}
		\begin{align}
			\label{eq:independence:dx}
			\sum_\alpha\lambda_{(\alpha)}\left(K^{(\alpha)}_{ij,k}p^ip^j+K^{(\alpha)}_{jk}V^{,j}\right)&=0\\
			\label{eq:independence:dp}
			\sum_\alpha\lambda_{(\alpha)} K^{(\alpha)}_{jk}p^j&=0.
		\end{align}
	\end{subequations}	
	Now, from the Killing equation \eqref{eq:Killing} and \eqref{eq:prolongation:K} we obtain
	\begin{align*}
		K^{(\alpha)}_{ij,k}p^ip^j
		&=-2K^{(\alpha)}_{kj,i}p^ip^j
		=-\frac23\young(jk,i)T\indices{^a_{jk}}K^{(\alpha)}_{ia}p^ip^j\\
		&=\frac23
			\left(
				T\indices{^a_{ij}}K^{(\alpha)}_{ka}-
				T\indices{^a_{kj}}K^{(\alpha)}_{ia}
			\right)
			p^ip^j.
	\end{align*}
	In the sum over $\alpha$, the second summand vanishes due to
	\eqref{eq:independence:dp},
	\[
		\sum_\alpha
			\lambda_{(\alpha)}
			K^{(\alpha)}_{ij,k}p^ip^j
		=\frac23
		\sum_\alpha
			\lambda_{(\alpha)}
			T\indices{^a_{ij}}K^{(\alpha)}_{ka}p^ip^j.
	\]
	Substituted back into \eqref{eq:independence:dx}, we have
	\[
		\sum_\alpha
			\lambda_{(\alpha)} K^{(\alpha)}_{ka}
			\left(
				\frac23T\indices{^a_{ij}}p^ip^j+
				V^{,a}
			\right)
		=0
	\]
	or
	\begin{equation}
		\label{eq:affine_subspace}
		V_{,a}
		\in
		-\frac23T\indices{_{aij}}p^ip^j+
		\ker\biggl(\sum_\alpha\lambda_{(\alpha)} K^{(\alpha)}\biggr).
	\end{equation}
	The solution space of \eqref{eq:prolongation:V} is parametrised by the
	values of $\nabla V$ and $\Delta V$ at a fixed point.  Therefore
	\eqref{eq:affine_subspace} proves the theorem in case the kernel is not
	maximal at some point in phase space.  On the other hand, if the kernel
	above is maximal for any point in phase-space, then the $K^{(\alpha)}$ are
	functionally linearly dependent and thus linearly dependent at some point
	in phase space.  By Proposition~\ref{prop:SIC:K} the $K^{(\alpha)}$ are
	determined by their values in a fixed point.  Hence they are linearly
	dependent in $\mathcal K(M)$, which contradicts the assumption.
\end{proof}

The proof of Proposition~\ref{prop:equations} shows that an explicit knowledge
of the set $\Phi(\mathcal I)$ solves the classification problem for
irreducible non-degenerate superintegrable systems.  This motivates the following
definition.

\begin{definition}
	We call the subset
	\[
		\Phi(\mathcal I)\subset G_{2n-1}\bigl(\mathcal K(M)\bigr)\,,
	\]
	given by the conditions in Proposition~\ref{prop:equations}, the
	\emph{classification space for irreducible non-degenerate superintegrable
	systems}.
\end{definition}

We are now ready to state our first main result.

\begin{theorem}
	\label{thm:main:variety}
	The classification space for irreducible non-degenerate superintegrable
	systems on a Riemannian manifold $M$ with analytic metric is a
	quasi-projective subvariety in the Grassmannian $G_{2n-1}\bigl(\mathcal
	K(M)\bigr)$ of $(2n-1)$-dimensional subspaces in the space $\mathcal K(M)$
	of Killing tensors on $M$.
\end{theorem}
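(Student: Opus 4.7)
The plan is to realise the classification space as the image of an injective, natural map into the projective variety $G_{2n-1}(\mathcal{K}(M))$, and to identify this image as an algebraic subvariety cut out by polynomial equations in the Pl\"ucker coordinates. First I would recall that the Killing equation is overdetermined of finite type, so $\mathcal{K}(M)$ is finite-dimensional and $G_{2n-1}(\mathcal{K}(M))$ is a projective variety via the Pl\"ucker embedding. The map $\phi$ then sends a superintegrable system in the classification space to the $(2n-1)$-dimensional subspace of $\mathcal{K}(M)$ spanned by the kinetic parts $K^{(\alpha)}$ of its integrals.

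Injectivity of $\phi$ is where the work of Sections~\ref{sec:V} and~\ref{sec:K} pays off. Given a subspace in the image, irreducibility together with Proposition~\ref{prop:Wilczynski} reconstructs the structure tensor $T_{ijk}$ pointwise as a rational expression in the Killing tensor components and their first derivatives; the Wilczynski equation~\eqref{eq:Wilczynski} together with the prolongation~\eqref{eq:prolongation:V} then recovers the potential $V$ up to an additive constant. This is essentially the content of Corollary~\ref{cor:uniqueness}: the pointwise data of the subspace determines the whole superintegrable system locally, so $\phi$ is injective.

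For the variety structure I would fix a base point $x_0$ and exploit the fact that each element of $\mathcal{K}(M)$ is encoded algebraically by its jet at $x_0$. A subspace of $\mathcal{K}(M)$ is thus encoded by its Pl\"ucker coordinates, and evaluation of the Killing tensors together with their first derivatives at $x_0$ is algebraic in these coordinates. The defining membership conditions --- (a) containment of the metric, (b) the existence of a common structure tensor $T$ satisfying the integrability conditions~\eqref{eq:SIC:V}, and (c) that $T$ also satisfies~\eqref{eq:SIC:K} --- all translate to polynomial equations in the Pl\"ucker coordinates once the denominators in Proposition~\ref{prop:Wilczynski}'s rational construction of $T$ are cleared. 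The image of $\phi$ is then a subvariety of $G_{2n-1}(\mathcal{K}(M))$.

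The main obstacle is precisely the clearing-of-denominators step. Proposition~\ref{prop:Wilczynski} obtains $T$ by inverting a linear operator whose kernel is one-dimensional (spanned by the identity endomorphism) exactly on the irreducible stratum and larger elsewhere, so the natural construction is rational and degenerates off this stratum. One has to show that, after homogenising the integrability conditions to polynomial form, their zero locus intersected with the open irreducible stratum recovers the classification space exactly, and that taking the Zariski closure inside the Grassmannian supplies the claimed projective structure. A secondary issue is verifying that irreducibility, together with the dimension count for the span of the $K^{(\alpha)}$, lets $\phi$ be defined on the whole classification space (including the admitted degenerate systems), so that no information is lost when passing from the system to its subspace of Killing tensors.
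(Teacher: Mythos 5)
Your proposal follows essentially the same route as the paper's proof: both express the structure tensor $T$ and its derivative $\nabla T$ at a point as rational functions of the pointwise data $K^{(\alpha)}_{ij}(x)$, $K^{(\alpha)}_{ij,k}(x)$ via Proposition~\ref{prop:Wilczynski}, observe that the integrability conditions \eqref{eq:SIC:V} and \eqref{eq:SIC:K} become polynomial (indeed homogeneous) after clearing denominators, and realise the classification space as the resulting subvariety of $G_{2n-1}\bigl(\mathcal K(M)\bigr)$ --- the paper merely effects the descent to the Grassmannian by cutting out a $\operatorname{GL}_{2n-1}$-invariant locus in the Stiefel manifold $V_{2n-1}\bigl(\mathcal K(M)\bigr)$ and quotienting, rather than working in Pl\"ucker coordinates, and it invokes basis-invariance of the Bertrand--Darboux equations where you invoke Corollary~\ref{cor:uniqueness}. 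The one point where you genuinely diverge is the proposed Zariski closure at the end: the paper does not take a closure (it asserts that the cleared, homogeneous equations cut out the classification space exactly on the locus of linearly independent $K^{(\alpha)}$), and taking a closure would in general adjoin limit points that are not superintegrable systems, which would break the claimed isomorphism between the classification space and the subvariety --- so that step should be replaced by the direct argument.
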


\begin{proof}
	It suffices to show that in Proposition~\ref{prop:equations} the
	conditions \ref{it:V} and \ref{it:K} as well as the opposite of condition
	\ref{it:rank} define subvarieties in $G_{2n-1}\bigl(\mathcal K(M)\bigr)$.

	For the moment, let us fix a point~$x\in M$.  The evaluation~$K\mapsto
	K(x)$ of a tensor $K$ at $x$ and the covariant derivative~$K\mapsto\nabla
	K$ are linear operations.  Therefore, for every~$x$, the components of a
	Killing tensor and its derivatives,
	\begin{align*}
		K_{ij}&(x)&
		K_{ij,k}&(x)&
		K_{ij,kl}&(x),
	\end{align*}
	are linear functions on the space~$\mathcal K(M)$ of Killing tensors
	on~$M$.

	The Moore-Penrose inverse $A^\dagger$ is rational in the components of
	$A$, with homogeneous numerator and denominator of degree $2r-1$
	respectively $2r$.  Consequently, the components $T_{ijk}(x)$ of the
	structure tensor of an irreducible superintegrable system are rational
	functions in the $K^{(\alpha)}_{ij}(x)$ and the $K^{(\alpha)}_{ij,k}(x)$
	for $\alpha=0,1,\ldots,2n-2$.  Hence the $T_{ijk}(x)$ are rational
	functions on the space $\mathcal K(M)^{2n-1}$, homogeneous of degree $0$.
	Similarly, the components $T_{ijk,l}(x)$ of the derivative of the
	structure tensor are rational in $K^{(\alpha)}_{ij}(x)$,
	$K^{(\alpha)}_{ij,k}(x)$ and $K^{(\alpha)}_{ij,kl}(x)$, so they as well
	are homogeneous rational functions of degree 0 on $\mathcal K(M)^{2n-1}$.

	We now show that in Proposition~\ref{prop:equations} the conditions
	\ref{it:V} and \ref{it:K} as well as the opposite of condition
	\ref{it:rank} are given by homogeneous algebraic equations.  First note
	that a set of Killing tensors has non-maximal rank if and only if
	\[
		G_r(A)=0
		\qquad\text{for}\quad
		r=r_\text{max}=\frac{(n-1)(n+2)}2
	\]
	holds on all of $M$.  Moreover, $G_r(A)$ is a homogeneous polynomial in
	the components of $A$, i.e.\ in the $K^{(\alpha)}_{ij}(x)$.  This shows
	that the opposite of condition~\ref{it:rank} in
	Proposition~\ref{prop:equations} is a homogeneous algebraic equation on
	$\mathcal K(M)^{2n-1}$ for every $x\in M$.

	Evaluated at $x$, the integrability conditions~\eqref{eq:SIC:V} are
	polynomial in $T_{ijk}(x)$ and $T_{ijk,l}(x)$, which are homogeneous
	rational functions of degree 0 on $\mathcal K(M)^{2n-1}$.  Consequently,
	the integrability conditions~\eqref{eq:SIC:V} can be written as
	homogeneous algebraic equations on $\mathcal K(M)^{2n-1}$ for every $x\in
	M$.  By similar arguments the same is true for the integrability
	conditions~\eqref{eq:SIC:K}.

	To summarise, in Proposition~\ref{prop:equations} the conditions
	\ref{it:V} and \ref{it:K} as well as the opposite of condition
	\ref{it:rank} are given by homogeneous algebraic equations on the space
	$\mathcal K(M)^{2n-1}$ for every point $x\in M$.  Together, these
	equations define a quasi-projective subvariety in the Stiefel manifold
	$V_{2n-1}\bigl(\mathcal K(M)\bigr)$.

	The Bertrand-Darboux equations~\eqref{eq:dKdV} as well as the properties of irreducibility
	and non-degeneracy are invariant under linear changes of the basis
	$K^{(\alpha)}$.  Therefore, the above subvariety is invariant under the
	action of the general linear group $\operatorname{GL}_{2n-1}$ on
	$V_{2n-1}\bigl(\mathcal K(M)\bigr)$ and descends to a quasi-projective
	subvariety of the Grassmannian
	\[
		G_{2n-1}\bigl(\mathcal K(M)\bigr)
		\cong
		\frac{V_{2n-1}\bigl(\mathcal K(M)\bigr)}{\operatorname{GL}_{2n-1}}.
	\]
\end{proof}

\section{Constant curvature}
\label{sec:R=const}

\subsection{Codazzi tensors of a superintegrable system}

In this section we express the integrability conditions \eqref{eq:SIC:V:bis}
on constant curvature manifolds in terms of Codazzi tensors and use their
local form to rewrite them as a system of partial differential equations for
two scalar functions -- the \emph{structure functions} of the superintegrable
system.

\begin{definition}
	A \emph{second-order Codazzi tensor} is a symmetric tensor $C_{ij}$
	satisfying
	\begin{equation}
		\label{eq:Codazzi:2nd}
		\young(j,k)C_{ij,k}=0.
	\end{equation}
\end{definition}

\begin{proposition}
	\label{prop:Codazzi:two}
	For every non-degenerate superintegrable system on a constant curvature
	manifold of dimension $n\geq3$, there exists a function $\zeta$ such that the trace modification
	\begin{equation}
		\label{eq:Cij}
		C_{ij}=Z_{ij}+\zeta g_{ij}
	\end{equation}
	of the symmetric tensor~\eqref{eq:Z} is a Codazzi tensor.
\end{proposition}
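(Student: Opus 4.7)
The Codazzi condition $\young(j,k)\,C_{ij,k}=0$ for $C_{ij}=Z_{ij}+\Phi\,g_{ij}$ is equivalent to the tensorial identity
\[
	Z_{ij,k}-Z_{ik,j}
	=\Phi_{,j}\,g_{ik}-\Phi_{,k}\,g_{ij}.
\]
Contracting with $g^{ij}$ pins down $\Phi$, up to an additive constant, to
$(n-1)\,\Phi_{,k}=\nabla^iZ_{ik}-Z_{,k}$
with $Z:=g^{ij}Z_{ij}$. The argument therefore splits into two parts:
(i)~show that $Z_{i[j,k]}$ is necessarily of \emph{pure trace} form, i.e.\ $Z_{ij,k}-Z_{ik,j}=\alpha_j g_{ik}-\alpha_k g_{ij}$ for some $1$-form $\alpha$, which is then forced by the trace formula to equal $(n-1)\Phi_{,k}$; and (ii)~show that $\alpha$ is closed, so that by the Poincar\'e lemma $\alpha=(n-1)\,d\Phi$ locally.

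For~(i), I would begin from the differentiated integrability condition~\eqref{eq:SIC:integrability} (a consequence of \eqref{eq:SIC:V:differential}). On a manifold of constant sectional curvature~$\kappa$,
\[
	R\indices{^a_{ijk}}=\kappa\bigl(\delta^a_j\,g_{ik}-\delta^a_k\,g_{ij}\bigr),
	\qquad W_{ijkl}=0,
	\qquad \mathring R_{ij}=0,
\]
so the $R\,T$-term in \eqref{eq:SIC:integrability} collapses to a sum of trace expressions in $T$ and~$g$, while \eqref{eq:SIC:V:Weyl} reduces to a purely algebraic identity on products of~$T$. After unpacking the hook symmetriser $\young(ij)\young(k,l,m)$ and using~\eqref{eq:SIC:V:symmetries} to rewrite any remaining antisymmetrisation of the form $T_{[ab]c}$ as trace terms, the resulting identity should precisely encode that $Z_{i[j,k]}$ is of pure trace form, with $\alpha$ given by the expected combination of traces of~$T$ and of~\eqref{eq:Z}.

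For~(ii), the closedness $\alpha_{[l,k]}=0$ can be obtained by covariantly differentiating the identity of~(i), antisymmetrising in a further pair of indices and invoking the Ricci identity together with~\eqref{eq:SIC:V:differential} once more; alternatively it can be verified directly from the explicit trace formula for $\alpha$ using the definition~\eqref{eq:Z} of~$Z$ and the trace identities from~\eqref{eq:SIC:V:linear} and~\eqref{eq:SIC:V:cubic}. The Poincar\'e lemma then supplies~$\Phi$ locally, so that $C_{ij}=Z_{ij}+\Phi\,g_{ij}$ is a Codazzi tensor. The principal obstacle is step~(i): despite a clean end result, isolating the pure-trace part of $Z_{i[j,k]}$ requires careful bookkeeping of the mixed symmetries of~$T$, $Z$ and their contractions, and the deployment of $W_{ijkl}=0$ at just the right moment.
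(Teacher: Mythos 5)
Your proposal follows essentially the same route as the paper: both extract from \eqref{eq:SIC:integrability}, specialised to constant curvature where the curvature term drops out by \eqref{eq:SIC:V:linear}, the statement that $Z_{i[j,k]}$ is pure trace (the paper's ``conformal Codazzi equation'' \eqref{eq:Codazzi:conformal}), and then show the resulting trace one-form is closed by differentiating once more, antisymmetrising, applying the Ricci identity and tracing, so that the Poincaré lemma supplies $\Phi$. The computation you defer in step~(i) does go through as you anticipate -- the trace contributions from the $R\,T$-term in fact cancel entirely -- so the outline is sound.
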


\begin{proof}
	On a constant curvature manifold we have
	\[
		R_{ijkl}
		=
		\frac R{n(n-1)}
		(g_{ik}g_{jl}-g_{il}g_{jk}),
	\]
	where $R=R\indices{^{ab}_{ab}}$ is the scalar curvature.  In this case the
	curvature term in \eqref{eq:SIC:integrability} vanishes due to the first
	integrability condition~\eqref{eq:SIC:V:linear},
	\[
		\young(ij)\young(k,l,m)
		Z_{ik,l}g_{jm}
		=0.
	\]
	The trace of this equation over $(j,k)$ yields a conformal Codazzi
	equation for the tensor $Z_{ij}$:
	\begin{equation}
		\label{eq:Codazzi:conformal}
		{\young(li,m)}^*_{\mspace{-26mu}\circ\mspace{+26mu}}
		Z_{il,m}
		=0.
	\end{equation}
	Skew symmetrising the covariant derivative of this equation in all indices
	but $i$ and using the Ricci identity results in
	\[
		{\young(li,m,n)}^*
		\left(
			R\indices{^a_{lmn}}Z_{ai}
			+\frac1{n-1}g_{il}Z\indices{_m^a_{,an}}
		\right)
		=0.
	\]
	Taking the trace of this equation over $(i,n)$ now shows that the divergence
	of $Z_{ij}$ is closed,
	\[
		\young(l,m)Z\indices{_m^a_{,al}}=0,
	\]
	and hence the differential of some function, more precisely
	\[
		Z\indices{_m^a_{,a}}=(n-1)\zeta_{,m}.
	\]
	Under this condition, the Codazzi equation~\eqref{eq:Codazzi:2nd}
	for $Z_{ij}$ reduces to the conformal Codazzi equation
	\eqref{eq:Codazzi:conformal} for $C_{ij}$, which we have already shown to
	be satisfied.
\end{proof}

\begin{lemma}\cite{Ferus}
	\label{lemma:Codazzi:two}
	On a constant curvature manifold every second-order Codazzi tensor
	$C_{ij}$ is locally of the form
	\[
		C_{ij}=C_{,ij}+\frac R{n(n-1)}Cg_{ij},
	\]
	for some function $C$, where $R$ is the scalar curvature.  Conversely,
	every tensor of this form on a constant curvature manifold is a Codazzi
	tensor.
\end{lemma}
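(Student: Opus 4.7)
The statement splits into three parts: a converse (every such form is Codazzi), a forward implication (every Codazzi tensor has the claimed form), and a uniqueness assertion. The converse is a direct computation: writing $\kappa := R/(n(n-1))$ and differentiating the proposed form, one obtains
\[
C_{ij,k} - C_{ik,j} = (C_{,ijk} - C_{,ikj}) + \kappa (g_{ij} C_{,k} - g_{ik} C_{,j}).
\]
The Ricci identity $C_{,ijk} - C_{,ikj} = R\indices{^m_{ijk}} C_{,m}$ combined with the constant-curvature expression $R\indices{^m_{ijk}} = \kappa (g^m_j g_{ik} - g^m_k g_{ij})$ makes the two terms cancel.

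For the forward direction I would use a prolongation argument. Given a Codazzi tensor $C_{ij}$, I introduce auxiliary unknowns $A_i$ (a one-form) and $C$ (a scalar) and impose the first-order linear system
\[
\nabla_j A_i = C_{ij} - \kappa C g_{ij}, \qquad \nabla_i C = A_i,
\]
with $C_{ij}$ regarded as fixed data. Any local solution $C$ automatically satisfies $C_{ij} = C_{,ij} + \kappa C g_{ij}$, so the task reduces to verifying Frobenius compatibility in a neighbourhood of a chosen point $x_0$, for arbitrary prescribed initial data $\bigl(C(x_0), A_i(x_0)\bigr)$. The symmetry $\nabla_j A_i = \nabla_i A_j$ is immediate from the symmetry of $C_{ij}$. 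The Ricci identity $A_{i,jk} - A_{i,kj} = R\indices{^m_{ijk}} A_m$ applied to the right-hand side of the first equation yields
\[
(C_{ij,k} - C_{ik,j}) - \kappa (A_k g_{ij} - A_j g_{ik}) = R\indices{^m_{ijk}} A_m,
\]
whose first term vanishes by the Codazzi hypothesis and whose remaining terms cancel on constant curvature. The prolongation is therefore Frobenius-integrable and admits a local solution.

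For uniqueness, two solutions $C, \tilde C$ yielding the same Codazzi tensor differ by $\delta C := \tilde C - C$ satisfying $\delta C_{,ij} = -\kappa\, \delta C\, g_{ij}$, whose trace-free part in particular vanishes, giving the claimed gauge freedom ${\young(ij)\!}_\circ\, \delta C_{,ij} = 0$. The crux of the argument is the compatibility verification in the prolongation: it is precisely the constant-curvature assumption that makes the residual curvature terms cancel, so the result is genuinely rigid — in a non-constant curvature geometry the obstruction $\bigl(R\indices{^m_{ijk}} - \kappa (g^m_j g_{ik} - g^m_k g_{ij})\bigr) A_m$ would survive and preclude local solvability.
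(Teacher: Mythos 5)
Your proof is correct, but note that the paper itself does not prove this lemma at all --- it is stated with a citation to Ferus and used as a black box (the analogous third-order statement, Lemma~\ref{lemma:Codazzi:three}, is then said to have an ``analogous'' proof). Your prolongation argument is the standard route and essentially Ferus's own: close the system with the auxiliary unknowns $(C, A_i)$, check that the Frobenius compatibility of $\nabla_j A_i = C_{ij}-\kappa C g_{ij}$ reduces, via the Ricci identity, to the Codazzi condition plus the cancellation $\kappa(A_jg_{ik}-A_kg_{ij}) = R\indices{^m_{ijk}}A_m$, which is exactly where constant curvature enters. The converse computation and the uniqueness step are also fine; for uniqueness you prove the correct direction, namely that the difference of two solutions satisfies $\delta C_{,ij}=-\kappa\,\delta C\,g_{ij}$ and hence the stated trace-free condition. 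One minor caveat on a side remark: the claim that non-constant curvature would ``preclude local solvability'' is an overstatement --- the residual term $\bigl(R\indices{^m_{ijk}}-\kappa(g^m_jg_{ik}-g^m_kg_{ij})\bigr)A_m$ obstructs solvability for \emph{generic} initial data, but particular Codazzi tensors may still admit such a potential; since this is outside the lemma it does not affect your proof.
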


\begin{definition}
	A \emph{third order Codazzi tensor} is a totally symmetric tensor
	$B_{ijk}$ that satisfies
	\begin{equation}
		\label{eq:Codazzi:3rd}
		\young(k,l)B_{ijk,l}=0.
	\end{equation}
\end{definition}

\begin{proposition}
	\label{prop:Codazzi:three}
	For every non-degenerate superintegrable system on a constant curvature
	manifold of dimension $n\geq3$, the tensor
	\begin{equation}
		\label{eq:Bijk}
		 B_{ijk}
		=T_{ijk}
		+\frac1{n-1}g_{ij}t_{,k}
		+\frac1{2(n-2)}\young(ijk)g_{ij}C_{,k}
	\end{equation}
	is a Codazzi tensor.
\end{proposition}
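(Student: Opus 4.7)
The plan is to establish separately that (a) $B_{ijk}$ is totally symmetric and (b) satisfies the Codazzi equation $\young(k,l)B_{ijk,l}=0$. The main ingredients are: the decomposition of $T_{ijk}$ from Proposition~\ref{prop:SIC:V:linear}, the integrability condition~\eqref{eq:SIC:V:Ricci:bis}, the local form of $C_{ij}$ from Lemma~\ref{lemma:Codazzi:two}, and the relation $C_{ij}=Z_{ij}+\Phi g_{ij}$ from Proposition~\ref{prop:Codazzi:two}.

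For (a), I would substitute \eqref{eq:St2T} into $B_{ijk}$ and collect coefficients. Using the rescaling $t_i=\tfrac{(n-1)(n+2)}{n}\bar t_i$ from~\eqref{eq:T:components} (together with $t_i=t_{,i}$ from Corollary~\ref{cor:symmetry}), the coefficient of $g_{ij}$ collapses from $-\tfrac{2}{n}\bar t_k+\tfrac{1}{n-1}t_{,k}+\tfrac{1}{n-2}C_{,k}$ to $\bar t_k+\tfrac{1}{n-2}C_{,k}$, which matches the coefficients of $g_{ik}$ and $g_{jk}$ up to an index permutation. One then arrives at
\[
B_{ijk}=\mathring T_{ijk}+u_ig_{jk}+u_jg_{ik}+u_kg_{ij},\qquad u_i:=\bar t_i+\tfrac{1}{n-2}C_{,i},
\]
which is manifestly totally symmetric, since $\mathring T_{ijk}$ was already totally symmetric by Proposition~\ref{prop:SIC:V:linear}.

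For (b), I would compute $\young(k,l)B_{ijk,l}$ term by term. Commutativity of covariant derivatives on scalars kills $\young(k,l)t_{,kl}$ and $\young(k,l)C_{,kl}$, so the $g_{ij}$-coefficient contributions drop out. The surviving $C$-piece is $\tfrac{1}{n-2}\bigl(g_{ik}C_{,jl}-g_{il}C_{,jk}+g_{jk}C_{,il}-g_{jl}C_{,ik}\bigr)$; Lemma~\ref{lemma:Codazzi:two} then allows me to replace $C_{,kl}$ by $C_{kl}-\tfrac{R}{n(n-1)}Cg_{kl}$, with the scalar-curvature correction cancelling in the antisymmetric $g\wedge g$ combination. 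For the $T$-piece, I would invoke \eqref{eq:SIC:V:Ricci:bis}: since $T_{ijk,l}$ is $(i,j)$-symmetric, the outer $\young(ij)$ only contributes a factor of two, which yields
\[
\young(k,l)T_{ijk,l}=-\tfrac{1}{n-2}\bigl(g_{ik}Z_{jl}-g_{il}Z_{jk}+g_{jk}Z_{il}-g_{jl}Z_{ik}\bigr).
\]
The substitution $Z_{ij}=C_{ij}-\Phi g_{ij}$ introduces only $\Phi$-terms of the form $\Phi(g_{ik}g_{jl}-g_{il}g_{jk}+g_{jk}g_{il}-g_{jl}g_{ik})\equiv 0$, and the $T$- and $C$-contributions cancel exactly.

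The only real obstacle is the bookkeeping of Young-projector identities around \eqref{eq:SIC:V:Ricci:bis} and the expansion of $\tfrac{1}{2(n-2)}\young(ijk)g_{ij}C_{,k}$. The conceptual pivot is that the coefficient $\tfrac{1}{2(n-2)}$ in the definition of $B_{ijk}$ is engineered precisely to cancel against the $\tfrac{2}{n-2}$ in \eqref{eq:SIC:V:Ricci:bis}, with Lemma~\ref{lemma:Codazzi:two} providing the dictionary between the scalar second derivatives of $C$ and the Codazzi tensor $C_{ij}$ that is available only in constant curvature.
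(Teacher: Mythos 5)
Your proposal is correct and follows essentially the same route as the paper's proof: total symmetry of $B_{ijk}$ comes from the first integrability condition via the decomposition \eqref{eq:St2T}, and the Codazzi equation reduces to the second integrability condition in the form \eqref{eq:SIC:V:Ricci:bis}, with the $\Phi g_{ij}$ and scalar-curvature corrections annihilated by the identity $\young(ij)\young(k,l)g_{ik}g_{jl}=0$. The paper merely compresses your term-by-term cancellation into the single statement $\young(k,l)B_{ijk,l}=\frac12\young(ij)\young(k,l)\bigl(T_{ijk,l}+\frac2{n-2}g_{ik}Z_{jl}\bigr)=0$.
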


\begin{proof}
	First note that the tensor $B_{ijk}$ is indeed symmetric, due to the first
	integrability condition~\eqref{eq:SIC:V:linear}.  Using the definitions
	\eqref{eq:Cij} and \eqref{eq:Bijk} of $C_{ij}$ respectively $B_{ijk}$
	together with the fact that
	\[
		\young(ij)\young(k,l)g_{ik}g_{jl}=0,
	\]
	we check that the Codazzi equation for $B_{ijk}$ is equivalent to the
	second integrability condition~\eqref{eq:SIC:V:quadratic}:
	\[
	 	\young(k,l)B_{ijk,l}
		=
		\frac12\young(ij)\young(k,l)
		\left(
			T_{ijk,l}
	 		+\frac2{n-2}g_{ik}Z_{jl}
		\right)
		=0.
	\]
\end{proof}

The proof of the following Lemma is analogous to that of
Lemma~\ref{lemma:Codazzi:two}.

\begin{lemma}
	\label{lemma:Codazzi:three}
	On a constant curvature manifold every third order Codazzi tensor
	$B_{ijk}$ is locally of the form
	\[
		B_{ijk}=\frac16\young(ijk)\left(B_{,ij}+\frac{4R}{n(n-1)}g_{ij}B\right)_{,k}
	\]
	for some function $B$.  Conversely, every tensor of this form on a
	constant curvature manifold is a Codazzi tensor.
\end{lemma}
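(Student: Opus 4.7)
The plan is to mirror the three-step structure of the proof of Lemma~\ref{lemma:Codazzi:two}: first verify that the stated formula produces a third-order Codazzi tensor, then establish existence of the scalar potential $B$ for a given Codazzi tensor $B_{ijk}$, and finally identify the residual gauge freedom.

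For the converse direction I would fix a smooth function $B$ and note that total symmetry of the right-hand side is automatic from the symmetriser $\young(ijk)$. To verify $\young(k,l)B_{ijk,l}=0$ I would differentiate in $l$ and then apply the Ricci identity on a constant curvature manifold, where $R_{abcd}=\frac R{n(n-1)}(g_{ac}g_{bd}-g_{ad}g_{bc})$, to commute the fourth covariant derivatives of $B$. This produces curvature contributions of the form $\frac R{n(n-1)}$ times products of the metric with $\nabla B$, which are precisely cancelled by the derivative of the trace modification $\frac{4R}{n(n-1)}g_{ij}B$ upon symmetrisation in $(i,j,k)$ and subsequent antisymmetrisation in $(k,l)$. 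The coefficient $4$ arises by accumulation of the basic factor in Lemma~\ref{lemma:Codazzi:two} over an additional index pair.

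For existence, the decisive observation is that total symmetry of $B_{ijk}$ combined with the Codazzi equation $B_{ijk,l}=B_{ijl,k}$ forces $\nabla_l B_{ijk}$ to be totally symmetric in all four indices, extending the analogous observation for Lemma~\ref{lemma:Codazzi:two} that $\nabla_k C_{ij}$ is totally symmetric. I would then integrate in two successive stages: first, using this full symmetry together with the constant-curvature Ricci identity, extract a symmetric $2$-tensor $A_{ij}$ whose symmetrised covariant derivative reproduces $B_{ijk}$ up to a trace correction by the $1$-form $b_k:=B\indices{_{kl}^l}$; second, show that $A_{ij}$, after an analogous trace adjustment, satisfies the second-order Codazzi equation~\eqref{eq:Codazzi:two} and invoke Lemma~\ref{lemma:Codazzi:two} to obtain the scalar $B$. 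The coefficient $\frac{4R}{n(n-1)}$ accumulates through the basic $\frac R{n(n-1)}$ correction at each of these two integration steps.

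For uniqueness, if $B$ and $B+\delta B$ yield the same $B_{ijk}$, substituting into the defining formula and projecting onto the totally symmetric trace-free component eliminates the metric terms and leaves ${\young(ijk)\!}_\circ\,\delta B_{,ijk}=0$; conversely, any $\delta B$ satisfying this identity produces only pure-trace contributions, which are arranged by the coefficient $\frac{4R}{n(n-1)}$ to cancel after the full symmetrisation. I expect the main technical obstacle to lie in the two-stage integration in the existence step, specifically in verifying the integrability conditions at each stage: these are nontrivial tensorial identities that rely essentially on the special form of the Riemann tensor on constant curvature and would fail in the presence of Weyl curvature, since Weyl-type obstructions cannot be absorbed into a scalar potential.
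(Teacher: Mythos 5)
Your overall strategy --- reduce to the second-order case by observing that total symmetry of $B_{ijk}$ together with the Codazzi equation forces $\nabla_lB_{ijk}$ to be totally symmetric in all four indices, then integrate in two stages with constant-curvature trace corrections and invoke Lemma~\ref{lemma:Codazzi:two} at the last stage --- is exactly the route the paper intends: its entire proof is the remark that the argument is ``analogous to that of Lemma~\ref{lemma:Codazzi:two}'', which is itself quoted from Ferus without proof. So for the existence and converse directions your plan is the right one, with the caveat you yourself flag: the local solvability of the first-stage equation for $A_{ij}$ and the verification that $A_{ij}$ can be trace-corrected into a second-order Codazzi tensor are precisely where all the work lies, and these integrability checks are deferred rather than carried out (as they are in the paper).

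There is, however, one concretely false step. In the uniqueness paragraph you claim that, conversely, any $\delta B$ with ${\young(ijk)}_\circ\,\delta B_{,ijk}=0$ produces only pure-trace contributions ``which are arranged by the coefficient $\frac{4R}{n(n-1)}$ to cancel after the full symmetrisation''. They do not cancel. On flat space ($R=0$) take $\delta B=\tfrac14\|x\|^4$: then $\delta B_{,ijk}=2(g_{ij}x_k+g_{jk}x_i+g_{ki}x_j)$ is pure trace, so ${\young(ijk)}_\circ\,\delta B_{,ijk}=0$, yet $\tfrac16\young(ijk)\delta B_{,ijk}=2(g_{ij}x_k+g_{jk}x_i+g_{ki}x_j)\neq 0$ is a nonzero Codazzi tensor and there is no compensating term; on the unit sphere a first spherical harmonic does the same. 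So the stated gauge condition is necessary but not sufficient for $B_{ijk}$ to be unchanged --- which is consistent with the paper's own flat-space computation of $\delta B$ (it contains an $r^4$ term) and with the later use of the lemma, where only the trace-free part ${\young(ijk)}_\circ B_{,ijk}$ enters the structure tensor via~\eqref{eq:B2S} and the trace ambiguity is absorbed by the compatibility condition~\eqref{eq:gauge:BC} with $\delta C$. You should prove only the forward implication, which follows exactly as you describe by taking the trace-free totally symmetric part of the defining formula, and drop the claimed cancellation.
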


We can now rewrite the structure tensor and all integrability conditions in
terms of the structure functions $B$ and $C$.

\begin{proposition}
	The structure tensor of a superintegrable system on a constant curvature
	manifold of dimension $n\geq3$ has the decomposition~\eqref{eq:St2T} with
	\begin{subequations}
		\label{eq:BC2T}
		\begin{align}
			\label{eq:B2S}
			\mathring T_{ijk}&=\frac16{\young(ijk)\!}_\circ\,B_{,ijk}\\
			\label{eq:BC2t}
			\frac{n}{n-1}t&=\Delta B+\frac{2(n+1)}{n(n-1)}RB-\frac{n+2}{n-2}C+\text{constant}
		\end{align}
	\end{subequations}
	for two functions $B$ and $C$, which are unique up to a gauge
	transformation
	\begin{subequations}
		\label{eq:gauge}
		\begin{align}
			\label{eq:gauge:B}B&\mapsto B+\delta B&{\young(ijk)\!}_\circ\,&\delta B_{,ij         k}=0\\
			\label{eq:gauge:C}C&\mapsto C+\delta C&{\young(ij )\!}_\circ\,&\delta C_{,ij\phantom k}=0
		\end{align}
		satisfying the compatibility condition
		\begin{equation}
			\label{eq:gauge:BC}
			\frac{n+2}{n-2}\delta C
			=\Delta\delta B
			+\frac{2(n+1)}{n(n-1)}R\delta B
			+\text{constant}.
		\end{equation}
	\end{subequations}
	In particular, we can choose simultaneously
	\begin{subequations}
		\label{eq:gauge:0}
		\begin{align}
			C(x_0)&=0&
			\nabla C(x_0)&=0&
			\Delta C(x_0)&=0\\
			B(x_0)&=0&
			\nabla       B(x_0)&=0&
			\nabla\nabla B(x_0)&=0
		\end{align}
	\end{subequations}
	in a fixed point $x_0$.
\end{proposition}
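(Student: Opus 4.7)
The plan is to combine the structural results on second- and third-order Codazzi tensors from the previous subsection with their Ferus-type local representations in order to produce the two scalar structure functions $B$ and $C$. First I would apply Proposition~\ref{prop:Codazzi:two} to obtain the second-order Codazzi tensor $C_{ij}=Z_{ij}+\Phi g_{ij}$, and then Lemma~\ref{lemma:Codazzi:two} to write locally $C_{ij}=C_{,ij}+\tfrac{R}{n(n-1)}Cg_{ij}$ for a scalar function $C$. With this $C$ in hand, Proposition~\ref{prop:Codazzi:three} delivers the third-order Codazzi tensor $B_{ijk}$ of \eqref{eq:Bijk}, and Lemma~\ref{lemma:Codazzi:three} represents it locally through a scalar function $B$ in the form stated there. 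The uniqueness of $B$ and $C$, each up to the respective gauges \eqref{eq:gauge:B} and \eqref{eq:gauge:C}, is inherited directly from the uniqueness clauses of these two lemmas.

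Next I would read off $\mathring T_{ijk}$ and $t$ from these two representations. Since \eqref{eq:Bijk} shows that $B_{ijk}$ and $T_{ijk}$ differ only by terms of the form ``$g$ times a one-form'', the totally symmetric trace-free part of $B_{ijk}$ coincides with $\mathring T_{ijk}$. Applying the projector ${\young(ijk)}_\circ$ to the local form of $B_{ijk}$ from Lemma~\ref{lemma:Codazzi:three} annihilates the pure-trace term proportional to $RB$ and yields \eqref{eq:B2S}. For the trace relation \eqref{eq:BC2t} I would contract $B_{ijk}$ with $g^{ik}$: from \eqref{eq:Bijk} together with Corollary~\ref{cor:symmetry} this gives $\tfrac{n}{n-1}t_{,j}+\tfrac{n+2}{n-2}C_{,j}$, while the local Codazzi form contracts to $(\Delta B)_{,j}+\tfrac{2(n+1)}{n(n-1)}RB_{,j}$ after using the constant-curvature Ricci identity for the third derivatives of $B$. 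Equating the two expressions and integrating in $j$ produces \eqref{eq:BC2t} up to an additive constant.

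The compatibility \eqref{eq:gauge:BC} then arises as the condition that a simultaneous gauge shift $B\mapsto B+\delta B$, $C\mapsto C+\delta C$ preserves the scalar trace $t$ via \eqref{eq:BC2t}. For the pointwise normalisation \eqref{eq:gauge:0} at $x_0$, I would work in Riemann normal coordinates and choose $\delta B$ as a polynomial of degree at most two: its third covariant derivatives vanish at $x_0$, so the gauge condition \eqref{eq:gauge:B} is trivially satisfied there, and its $1+n+\binom{n+1}{2}$ free coefficients suffice to annihilate $B(x_0)$, $\nabla B(x_0)$ and $\nabla\nabla B(x_0)$. The compatibility \eqref{eq:gauge:BC} then determines $\delta C$ up to an additive constant and up to a residual $\delta C$-gauge summand; the remaining freedom, namely affine functions plus a scalar multiple of the squared normal-coordinate radius, precisely suffices to also enforce $C(x_0)=0$, $\nabla C(x_0)=0$ and $\Delta C(x_0)=0$.

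The main technical care lies in the trace computation above, where one must track the Ricci-identity corrections that arise when contracting $\tfrac16\young(ijk)B_{,ijk}$ and verify that on a constant-curvature manifold they combine with the $RB$ trace term to produce precisely the coefficient $\tfrac{2(n+1)}{n(n-1)}$ appearing in \eqref{eq:BC2t}. A secondary subtlety is to check that the residual freedom in $\delta C$ left after imposing \eqref{eq:gauge:BC} is exactly large enough to cover the three scalar conditions on $C$ at $x_0$, so that the simultaneous normalisation is indeed achievable without tension with the normalisation already imposed on~$B$.
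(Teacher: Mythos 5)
Your derivation of the decomposition itself follows the paper's proof essentially verbatim: you combine Proposition~\ref{prop:Codazzi:three} with Lemma~\ref{lemma:Codazzi:three} (and Proposition~\ref{prop:Codazzi:two} with Lemma~\ref{lemma:Codazzi:two} to produce the scalar $C$ in the first place), project onto the trace-free totally symmetric part to get \eqref{eq:B2S}, and contract with the metric, using the constant-curvature Ricci identity, to get \eqref{eq:BC2t}; the compatibility \eqref{eq:gauge:BC} is then read off from the requirement that the fixed trace $t$ be preserved under a gauge shift. All of this is correct and matches the paper.

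The gap is in the final normalisation \eqref{eq:gauge:0}. Restricting $\delta B$ to polynomials of degree at most two does normalise the $2$-jet of $B$ at $x_0$, but it leaves essentially no freedom for $C$: in the flat case, \eqref{eq:gauge:BC} with $R=0$ forces $\delta C$ to be \emph{constant} whenever $\Delta\delta B$ is constant, so you can only adjust $C(x_0)$, not $\nabla C(x_0)$ or $\Delta C(x_0)$. The ``residual $\delta C$-gauge summand'' you invoke does not exist: \eqref{eq:gauge:BC} determines $\delta C$ from $\delta B$ up to an additive constant, and an independent shift of $C$ by an affine function plus a multiple of $r^2$ is not an admissible gauge transformation of the pair $(B,C)$. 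The freedom you need comes precisely from the degree-three and degree-four gauge modes of $B$ that you discarded: in flat space the full gauge space is $\delta B=\tfrac14b_4r^4+r^2\,\vec b_3\cdot\vec r+\vec r^{\,T}\!A\vec r+\vec b_1\cdot\vec r+b_0$, and the quartic and cubic terms have vanishing $2$-jet at the origin (hence do not disturb the normalisation of $B$) while contributing the non-constant part $\Delta\delta B=(n+2)\bigl(b_4r^2+2\,\vec b_3\cdot\vec r\bigr)+\mathrm{const}$ that kills $\nabla C(x_0)$ and $\Delta C(x_0)$ through \eqref{eq:gauge:BC}; this is exactly how the paper argues. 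A secondary issue is the curved case: a degree-two polynomial in normal coordinates does not satisfy \eqref{eq:gauge:B} identically on a neighbourhood of $x_0$ when $R\neq0$ (the gauge condition is a differential identity, not a pointwise one), so your construction does not directly produce admissible gauge transformations there; the paper instead transfers the explicit flat-space solution by conformal equivalence.
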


\begin{proof}
	Combining Proposition~\ref{prop:Codazzi:three} and
	Lemma~\ref{lemma:Codazzi:three}, we get
	\[
		T_{ijk}+\frac1{n-1}g_{ij}t_{,k}
		=
		\young(ijk)
		\left(
			\frac16B_{,ijk}
			+\frac{2R}{3n(n-1)}g_{ij}B_{,k}
			-\frac1{2(n-2)}g_{ij}C_{,k}
		\right).
	\]
	Taking the trace-free part on each side results in~\eqref{eq:B2S}.
	Contracting this equation in $i$ and $j$ yields
	\[
		\frac n{n-1}t_{,k}
		=\frac13
			\left(
				  B\indices{_{,a}^a_k}
				+2B\indices{_{,ka}^a}
			\right)
		+\frac{4R}{3n(n-1)}B_{,k}
		-\frac{n+2}{n-2}C_{,k}.
	\]
	By the Ricci identity,
	\[
		 B\indices{_{,ka}^a}
		-B\indices{_{,a}^a_k}
		=
		 B\indices{_{,a}_k^a}
		-B\indices{_{,a}^a_k}
		=
		R\indices{^b_{ak}^a}B_{,b}
		=
		\frac{R}{n}B_{,k},
	\]
	so that
	\[
		\frac n{n-1}t_{,k}
		=B\indices{_{,a}^a_k}
		+\frac{2(n+1)R}{n(n-1)}B_{,k}
		-\frac{n+2}{n-2}C_{,k}.
	\]
	This is equivalent to \eqref{eq:BC2t} and imposes the
	constraint~\eqref{eq:gauge:BC} on the gauge transforms \eqref{eq:gauge:B}
	and \eqref{eq:gauge:C}.

	A flat constant curvature manifold is locally isometric to Euclidean
	space, so that \eqref{eq:gauge:B} and \eqref{eq:gauge:C} can easily be
	integrated to give
	\begin{align*}
		\delta C(\vec r)&=2(n-2)\left(\tfrac12c_2r^2+\vec c_1\vec r+c_0\right)\\
		\delta B(\vec r)&=\tfrac14b_4r^4+r^2\vec b_3\vec r+\vec r\:^T\!\!A\vec r+\vec b_1\vec r+b_0,
	\end{align*}
	where the $b_0,b_4,c_0,c_2$ are scalar constants, $\vec b_1,\vec b_3,\vec
	c_1$ are vectorial constants and $A$ is a constant symmetric matrix.
	Imposing~\eqref{eq:gauge:BC} with $R=0$, we find $b_4=c_2$ and $\vec
	b_3=\vec c_1$.  This shows that we can chose the gauge \eqref{eq:gauge:0}
	in a single point.

	A non-flat constant curvature manifold is conformally equivalent to a flat
	one.  Noting that on a constant curvature manifold the operators
	\eqref{eq:gauge:BC} are invariant under conformal transformations and that
	the Laplace operator $\Delta$ changes by a multiple of the identity, the
	proof is similar to the flat case.
\end{proof}

\begin{proposition}
	On a manifold of constant curvature $\kappa$ and dimension $n\geq3$,
	the integrability conditions
	for a superintegrable potential in the form \eqref{eq:SIC:V:bis} are
	equivalent to the following equations for the structure functions $B$
	and~$C$:
	\begin{subequations}
		\begin{align}
			\label{eqn:Weyl.BC}
			0&=
			{\young(ik,jl)}^*_\circ B\indices{_{,ik}^a}B_{,jla}\\
			\mathring C_{,ij}
			&\notag= \bigg[
				B\indices{_{,i}^{ab}}B_{,jab}
				+B\indices{_{,ij}^a} \Bigl( C - 2(n-2)\kappa B - \Delta B \Bigr)_{,a}
			\\ &\qquad
				-\frac1{n-2}\,\Bigl( C-(n-2)\kappa B \Bigr)_{,i} \Bigl( C-(n-2)\kappa B \Bigr)_{,j}
			\bigg]_\circ
			\label{eqn:consistency.BC}
		\end{align}
	\end{subequations}
\end{proposition}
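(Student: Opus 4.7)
My plan is to substitute the decomposition of the structure tensor given by \eqref{eq:St2T} and \eqref{eq:BC2T} into the two remaining conditions of \eqref{eq:SIC:V:bis}, namely the Weyl condition \eqref{eq:SIC:V:Weyl} and the differential condition \eqref{eq:SIC:V:differential}. The symmetry condition \eqref{eq:SIC:V:symmetries} is automatic, since the decomposition \eqref{eq:BC2T} was constructed precisely to enforce it.

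For the Weyl condition, the identity $W_{ijkl}=0$ on a constant curvature manifold turns \eqref{eq:SIC:V:Weyl} into ${\young(ik,jl)}^*_\circ T\indices{^a_{ik}}T_{ajl}=0$. Writing $T_{ijk}=\mathring T_{ijk}+\bar t_i g_{jk}+\bar t_j g_{ik}-\tfrac{2}{n}g_{ij}\bar t_k$ from \eqref{eq:St2T} and expanding the quadratic, the cross and pure-trace contributions carry explicit metric factors and thus, upon Young symmetrisation, sit in the Ricci and scalar components of the Ricci-type decomposition of algebraic curvature tensors; the trace-free Weyl projector annihilates these, leaving only the principal term $\mathring T\indices{^a_{ik}}\mathring T_{ajl}$. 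Using \eqref{eq:B2S} to replace $\mathring T_{ijk}$ by $\tfrac{1}{6}{\young(ijk)}_\circ B_{,ijk}$ and employing the constant-curvature Ricci identity $B_{,ijk}-B_{,ikj}=-\kappa(g_{ik}B_{,j}-g_{ij}B_{,k})$ to absorb the non-totally-symmetric part of $B_{,ijk}$ into further $g$-proportional terms that are again killed, the equation reduces exactly to \eqref{eqn:Weyl.BC}; the converse follows by reading this chain of simplifications backwards.

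For the differential condition I exploit the Codazzi structure established in Proposition~\ref{prop:Codazzi:two}. By \eqref{eq:Cij} and Lemma~\ref{lemma:Codazzi:two} the Codazzi tensor $C_{ij}=Z_{ij}+\Phi g_{ij}$ locally equals $C_{,ij}+\tfrac{R}{n(n-1)}Cg_{ij}$, so passing to the trace-free part yields the tautology $\mathring Z_{ij}=\mathring C_{,ij}$. On the other hand, computing $\mathring Z_{ij}$ directly from \eqref{eq:Z} and using $\mathring R_{ij}=0$ on a constant curvature manifold, I obtain
\[
 \mathring C_{,ij}
 =\bigl[\mathring T\indices{_i^{ab}}\mathring T_{jab}
 -(n-2)\bigl(\mathring T\indices{_{ij}^a}\bar t_a+\bar t_i\bar t_j\bigr)\bigr]_\circ.
\]
Substituting $\mathring T_{ijk}=\tfrac{1}{6}{\young(ijk)}_\circ B_{,ijk}$ via \eqref{eq:B2S} and the derivative of \eqref{eq:BC2t}, which on constant curvature reads $(n+2)\bar t_i=(\Delta B)_{,i}+2(n+1)\kappa B_{,i}-\tfrac{n+2}{n-2}C_{,i}$, and regrouping inside the trace-free projector, the right-hand side rearranges precisely into the combinations $C-(n-2)\kappa B$ and $C-2(n-2)\kappa B-\Delta B$ featuring in \eqref{eqn:consistency.BC}. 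The converse implication follows from the observation that on a constant curvature background the hook-projected $\yng(3,1)$-tensor of \eqref{eq:SIC:V:differential}, once the structure function decomposition is imposed, is uniquely reconstructible from the scalar trace-free two-tensor $\mathring Z_{ij}-\mathring C_{,ij}$.

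The main obstacle will be the algebraic bookkeeping in the second step: the coefficients multiplying $\kappa B$ must conspire into the precise combinations $C-(n-2)\kappa B$ and $C-2(n-2)\kappa B-\Delta B$ appearing in \eqref{eqn:consistency.BC}. This hinges on the interplay of the numerical prefactor $2(n+1)$ in \eqref{eq:BC2t} with the $(n-2)$ factors in \eqref{eq:Z}, together with the constant-curvature Ricci identity relating $(\Delta B)_{,i}$ and $\Delta(B_{,i})$ by a $\kappa$-proportional correction, which is what allows the curvature-dependent pieces to absorb into the distinguished linear combinations of $C$ and $B$.
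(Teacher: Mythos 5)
Your proposal is correct and follows essentially the same route as the paper: substitute the structure-function decomposition \eqref{eq:BC2T} into \eqref{eq:SIC:V:Weyl} (using $W_{ijkl}=0$ on a constant curvature manifold) to obtain \eqref{eqn:Weyl.BC}, and identify the content of \eqref{eq:SIC:V:differential} with the Codazzi property of $B_{ijk}$ together with the trace-free consistency $\mathring Z_{ij}=\mathring C_{,ij}$ coming from \eqref{eq:Z} and \eqref{eq:Cij}, which upon substitution is \eqref{eqn:consistency.BC}. The paper's proof is simply a terser statement of exactly these reductions.
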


\begin{proof}
	This follows from substituting \eqref{eq:St2T} with \eqref{eq:BC2T}
	into \eqref{eq:SIC:V:bis}:  Equation~\eqref{eq:SIC:V:symmetries} is
	identically satisfied by Proposition~\ref{prop:SIC:V:linear}, Equation
	\eqref{eq:SIC:V:Weyl} is equivalent to Equation~\eqref{eqn:Weyl.BC},
	Equation~\eqref{eq:SIC:V:differential} is satisfied due to the Codazzi
	equation for $B_{ijk}$ and Equation~\eqref{eq:Z} is equivalent to
	\eqref{eqn:consistency.BC}, using \eqref{eq:Cij}.
\end{proof}

\begin{remark}
	Equation~\eqref{eqn:Weyl.BC} can alternatively be written with the
	(symmetric) Codazzi tensor $B_{ijk}$ instead of the covariant derivative
	$B_{,ijk}$ of the structure function~$B$:
	\[
		0={\young(ik,jl)}^*_\circ B\indices{_{ik}^a}B_{jla}.
	\]
	The reason is that $B_{,ijk}$ is symmetric up to trace terms, so that the
	only non-trace terms in the contraction $B\indices{_{ik}^a}B_{jla}$ do not
	have Riemann symmetry according to the Littlewood-Richardson rule
	\[
		\yng(3)\otimes\yng(1)\cong\yng(4)\oplus\yng(3,1).
	\]
\end{remark}

\begin{remark}
	Equation~\eqref{eqn:consistency.BC} can be extended by a second equation
	expressing the covariant derivative of the Laplacian $\Delta C$
	polynomially in $\nabla C$, where the coefficients are polynomial in the
	derivatives of $B$.  Together, they define a (non-linear) prolongation of
	\eqref{eqn:consistency.BC}.  This leads to higher order integrability
	conditions for the prolongation of a superintegrable Killing tensor on a
	constant curvature manifold and will be subject of a forthcoming paper.
\end{remark}

\subsection{Algebraic superintegrability conditions for abundant systems}
\label{sec:abundant}

\begin{proposition}
	\label{prop:necessary}
	The structure tensor of an abundant superintegrable system on a constant
	curvature manifold of dimension $n\geq3$ satisfies
	\begin{subequations}
		\label{eq:SIC:K:abundant}
		\begin{align}
			\label{eq:SIC:K:Weyl:abundant}
			{\young(ik,jl)}^*_\circ\mathring T\indices{^a_{ik}}\mathring T_{ajl}&=0\\
			\label{eq:SIC:K:Ricci:abundant}
			{\young(ij)}_\circ
			\left(
			\mathring T\indices{_i^{ab}}\mathring T_{jab}
			-(n-2)(\mathring T\indices{_{ij}^a}\bar t_a+\bar t_i\bar t_j)
			\right)
			&=0\\
			\label{eq:perfect_square}
			\mathring T^{abc}\mathring T_{abc}-(n-1)(n+2)\bar t^a\bar t_a&=9R
		\end{align}
	\end{subequations}
\end{proposition}

\begin{proof}
	For constant curvature, Equations~\eqref{eq:SIC:K:1st} imply
	and \eqref{eq:SIC:K:Weyl:abundant} and \eqref{eq:SIC:K:Ricci:abundant},
	while Equation \eqref{eq:SIC:K:2nd} reads
	\[
		\left(
			\mathring T^{abc}\mathring T_{abc}
			-(n+2)(n-1)\bar t^a\bar t_a
			-9R
		\right)
		\bar t_i
		=0,
	\]
	showing that \eqref{eq:perfect_square} holds over the support of $\bar
	t_i$.  So let us suppose $\bar t_i=0$ in a local neighbourhood of some
	point.  There the tracefree and trace part of \eqref{eq:DT:Dt} read
	\begin{align*}
		\left(\mathring T\indices{_i^{ab}}\mathring T_{jab}\right)_\circ&=0&
		\mathring T^{abc}\mathring T_{abc}&=-\frac{9(n+2)}{3n+2}R
	\end{align*}
	and show that \eqref{eq:perfect_square} holds for $R=0$.  We therefore
	suppose $R\not=0$ from now on.  With the above equations \eqref{eq:DT:DS}
	reads
	\[
		\mathring T_{ijk,l}
		=\frac1{18}{\young(ijk)}
		\left(
			\mathring T\indices{_{ij}^a}\mathring T_{kla}
			+\frac{18R}{(3n+2)n}g_{ik}g_{jl}
		\right).
	\]
	We can use this equation to eliminate all derivatives in the Ricci
	identity
	\[
		\young(m,l)\mathring T_{ijk,lm}
		=\young(ijk)R\indices{^a_{ilm}}\mathring T_{ajk}
	\]
	to give
	\[
		\frac R{n(n-1)}\young(ijk)\young(m,l)\mathring T_{ijm}g_{kl}=0.
	\]
	A contraction over $(k,l)$ then shows
	\[
		\frac R{n-1}\mathring T_{ijm}=0.
	\]
	For $R\not=0$ this implies $\mathring T_{ijk}=0$.  This means that the
	structure tensor vanishes, which is only possible for $R=0$.
\end{proof}

The following proposition shows that the necessary
conditions~\eqref{eq:SIC:K:abundant} are sufficient to reconstruct an abundant
superintegrable system from the values of its structure tensor in a single
point.

\begin{proposition}
	\label{prop:sufficient}
	Consider a manifold $M$ of constant curvature and dimension $n\geq3$.
	Assume we are provided with the values of a tensor $T_{ijk}$
	in a fixed point $x_0\in M$ such that
	$T_{ijk}$ satisfies Conditions~\eqref{eq:SIC:K:abundant} in $x_0$.
	Then there exists a solution $T_{ijk}$ to the non-linear prolongation
	equations~\eqref{eq:prolongation:T}, defined almost everywhere in a
	neighborhood of $x_0$, where $T_{ijk}$ also satisfies the algebraic
	equations~\eqref{eq:SIC:K:abundant}.
\end{proposition}

\begin{proof}
	Using~\eqref{eq:prolongation:T}, the covariant derivatives of the three
	algebraic conditions~\eqref{eq:SIC:K:abundant} can be written as
	polynomials in $T_{ijk}$.  It is straightforward to check that these
	polynomials lie in the ideal generated algebraically
	by~\eqref{eq:SIC:K:abundant}.
\end{proof}

Let us briefly retrace how an abundant superintegrable system can be
reconstructed from the values of its structure tensor in a single point.

\begin{enumerate}
	\item
		Given a solution $T_{ijk}(x_0)$ to the algebraic integrability
		conditions~\eqref{eq:SIC:K:abundant} in a single point $x_0$, one can
		solve the prolongation equation~\eqref{eq:prolongation:T} to extend
		this solution to a tensor $T_{ijk}(x)$ satisfying the
		conditions~\eqref{eq:SIC:K:abundant} in a neighbourhood of $x_0$
		(c.f.~Proposition~\ref{prop:sufficient}).
	\item
		Next, one can solve the prolongation
		equation~\eqref{eq:prolongation:K} for any initial values
		$K_{ij}(x_0)$, yielding an $n(n+1)/2$-dimensional space of Killing
		tensors~$K_{ij}(x)$, as the corresponding integrability conditions are
		satisfied by the prolongation equation~\eqref{eq:prolongation:T} and
		a subset of the algebraic integrability
		conditions~\eqref{eq:SIC:K:abundant}
		(c.f.~Proposition~\ref{prop:prolongation:T}).
	\item
		Then, one can solve the prolongation
		equation~\eqref{eq:prolongation:V} for any initial values $V(x_0)$,
		$\nabla V(x_0)$ and $\Delta V(x_0)$ to obtain a function $V(x)$, since
		the integrability conditions for a superintegrable Killing tensor
		imply those for a superintegrable potential
		(c.f.~Corollary~\ref{cor:SIC:K->V}).
	\item
		As the prolongation equation~\eqref{eq:prolongation:K} for a
		superintegrable Killing tensor is the obstruction that a solution to
		the prolongation equation~\eqref{eq:prolongation:V} for a
		superintegrable potential solves the Bertrand-Darboux
		equation~\eqref{eq:dKdV}, the latter is satisfied as well.  This in
		turn is the integrability condition for Equation~\eqref{eq:potentials}.
		Therefore, one can solve the latter in order to obtain the individual
		potentials $V^{(\alpha)}$ that complement the Killing tensors
		$K^{(\alpha)}$ to integrals $F^{(\alpha)}=K^{(\alpha)}+V^{(\alpha)}$
		of the Hamiltonian $H=F^{(0)}=g+V$.  For a generic solution $V$ and a
		generic choice of $(2n-1)$ linearly independent Killing tensors, these
		integrals are functionally independent and hence form a
		superintegrable system (c.f.~Lemma~\ref{lemma:functional_dependence}).
\end{enumerate}
By construction, the superintegrable system thus obtained is non-degenerate,
and in fact abundant. On simply connected constant curvature manifolds the
locally defined Killing tensors extend globally, so the above construction is
global in this case.

\begin{corollary}
	A superintegrable system on a constant curvature manifold of dimension
	$n\geq3$ is abundant if and only if it satisfies the algebraic
	equations~\eqref{eq:SIC:K:abundant}.
\end{corollary}

\begin{proof}
	By Proposition~\ref{prop:necessary}, an abundant superintegrable system
	satisfies Equations~\eqref{eq:SIC:K:abundant}.  Conversely, suppose a
	superintegrable system satisfies these equations.  Then by
	Proposition~\ref{prop:sufficient}, there is an abundant superintegrable
	system having the same values of the structure tensor and of the Killing
	tensors at the fixed point~$x_0$.  Due to
	Proposition~\ref{prop:uniqueness} both systems the Killing tensors and the
	structure tensors of both systems coincide everywhere.  In particular,
	both are abundant.
\end{proof}

Theorem~\ref{thm:main:variety} states that the configuration space of
non-degenerate superintegrable systems is a quasi-projective variety.  Our
proof is not constructive, as it is based on an \emph{infinite} set of
\emph{implicitly defined} algebraic equations.  For an abundant system on a
constant curvature manifold, Proposition~\ref{prop:sufficient} reduces this
set to a \emph{finite} set of \emph{explicit} algebraic equations, given
by~\eqref{eq:SIC:K:abundant}.  This is what renders an algebraic-geometric
classification tractable in the first place.  We can simplify these equations
further by rewriting them in terms of the structure functions.

\begin{corollary}
	\label{cor:Master:B}
	A superintegrable system on a constant curvature manifold of dimension
	$n\geq3$ is abundant if and only if the structure function $B$ satisfies
	\begin{equation}
		\label{eq:Master:B}
		{\young(ij,kl)}^*
		\left(
			B\indices{^a_{ij}}B_{akl}
			+\frac{9R}{n(n-1)}g_{ij}g_{kl}
		\right)=0
	\end{equation}
	and the structure function $C$ vanishes, up to gauge
	transforms of the form~\eqref{eq:gauge}.
\end{corollary}

\begin{proof}
	Observe that for constant curvature the left hand side of
	Equation~\eqref{eq:SIC:K:Ricci:abundant} is nothing but the trace free
	part of the tensor~$Z_{ij}$ defined in~\eqref{eq:Z} and used in the
	definition~\eqref{eq:Codazzi:2nd} of the Codazzi tensor $C_{ij}$.  This
	shows that the trace-free part of $C_{ij}$ is zero and that we can choose
	a gauge in which $C$ vanishes.  But for $C$ identically zero,
	Equation~\eqref{eq:SIC:K:abundant} becomes the Ricci decomposition of the
	algebraic curvature tensor in~\eqref{eq:Master:B} after expressing the
	structure tensor in terms of the structure functions via~\eqref{eq:BC2T}.
\end{proof}

\subsection{Structure connection of a superintegrable system}

We now encode the information about a superintegrable system on a constant
curvature manifold in a torsion-free affine connection and express all
relevant integrability conditions for abundant systems as the flatness of this
connection.

\begin{definition}
	For a non-degenerate superintegrable system on a manifold of constant
	curvature and dimension $n\geq3$, we introduce a torsion-free affine
	connection~$\hat\nabla$ by
	\[
		g(\hat\nabla_X Y,Z) = g(\nabla_X Y,Z) + A(X,Y,Z),
	\]
	where the tensor $A_{ijk}$ is given by
	\begin{equation}
		\label{eqn:Aijk}
		A_{ijk} := -\frac13\,\left( T_{ijk} + \frac{1}{n-1}\,g_{ij}t_k \right)\,.
	\end{equation}
	We call $\hat\nabla$ the
	\emph{structure connection} of the superintegrable system and denote its
	curvature tensor by $\hat R_{ijkl}$.
\end{definition}

Note that $A_{ijk}$ is symmetric due to the first integrability
condition~\eqref{eq:SIC:V:linear}.

\begin{proposition}
	The curvature of the structure connection $\hat\nabla$, written in terms
	of the structure functions, reads
	\begin{multline}
		\label{eq:BC2R}
		\hat R\indices{^i_{jkl}}
		= R\indices{^i_{jkl}}
		+ \frac19\,{\young(k,l)}\,\bigg(
			B\indices{^i_{ka}}B\indices{^a_{jl}}
		\\
			+\frac{C^{,a}}{n-2} \left( g\indices{^i_l} B_{jak} - g_{jl} B\indices{^i_{ak}} \right)
			+\frac{3}{n-2}\,\left( g\indices{^i_l}C_{,jk} + g_{jl} C\indices{^{,i}_k} \right)
		\\
			+\frac1{(n-2)^2} \left(
					  g\indices{^i_k} C_{,j} C_{,l}
					  - g_{jk} C^{,i} C_{,l}
					  + g\indices{^i_k} g_{jl} C^{,a} C_{,a}
					\right)
			\bigg),
	\end{multline}
	where $R\indices{^i_{jkl}}$ is the metric curvature tensor.
\end{proposition}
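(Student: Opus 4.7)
The plan is to apply the standard formula for how the curvature of a torsion-free affine connection changes under a tensorial perturbation, and then to reduce the resulting terms using the Codazzi properties of $B_{ijk}$ and $C_{ij}$ together with the integrability condition \eqref{eqn:consistency.BC}.

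First I would write $\hat\nabla = \nabla + S$, where $S\indices{^i_{jk}} := g^{il}A_{ljk}$ is the difference $(1,2)$-tensor. Since $B_{ijk}$ is Codazzi and hence totally symmetric by Lemma~\ref{lemma:Codazzi:three}, the tensor $A_{ijk}$ is totally symmetric, so $S\indices{^i_{jk}} = S\indices{^i_{kj}}$, confirming that $\hat\nabla$ is torsion-free. Expanding the Young projector in the definition of $A$ gives
\[
   S\indices{^i_{jk}}
   = -\tfrac{1}{3}\,B\indices{^i_{jk}}
   + \tfrac{1}{3(n-2)}\bigl(g\indices{^i_j}C_{,k} + g\indices{^i_k}C_{,j} + g_{jk}C^{,i}\bigr).
\]
Then I would invoke the standard identity
\[
   \hat R\indices{^i_{jkl}}
   = R\indices{^i_{jkl}}
   + {\young(k,l)}\bigl(\nabla_k S\indices{^i_{lj}} + S\indices{^i_{ka}}S\indices{^a_{lj}}\bigr),
\]
reducing the task to computing the antisymmetrised derivative of $S$ and the quadratic term $S\cdot S$.

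For the derivative term, the Codazzi equation $\young(k,l)B_{ijk,l}=0$ for $B_{ijk}$ reduces $\young(k,l)\nabla_l B\indices{^i_{jk}}$ to trace-in-$j$ pieces. The $\nabla C$-part of $\nabla S$ produces second derivatives $C_{,jk}$, $C^{,i}{}_{,k}$ and $C^{,a}{}_{,a}g_{jl}$; the pure-trace part is controlled directly, and for the trace-free part I would substitute \eqref{eqn:consistency.BC} to re-express $\mathring C_{,jk}$ in terms of products of $\nabla^2 B$ and $\nabla C$. Together with the constant-curvature Ricci identity $C_{,ijk} - C_{,ikj} = -R\indices{^a_{ikj}}C_{,a}$ and the explicit form $R\indices{^i_{jkl}} = \kappa(g\indices{^i_k}g_{jl}-g\indices{^i_l}g_{jk})$, this extracts exactly the terms $g\indices{^i_l}C_{,jk}$ and $g_{jl}C^{,i}{}_{,k}$ that appear in~\eqref{eq:BC2R}.

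For the quadratic term, the expansion of $S\indices{^i_{ka}}S\indices{^a_{lj}}$ produces one pure $BB$-term $\tfrac{1}{9}B\indices{^i_{ka}}B\indices{^a_{jl}}$ (using symmetry of $B$), mixed $B\cdot\nabla C$ terms that collapse to $\tfrac{1}{9(n-2)}(g\indices{^i_l}B_{jak} - g_{jl}B\indices{^i_{ak}})C^{,a}$ after antisymmetrising in $(k,l)$ and using total symmetry of $B$, and pure $\nabla C\cdot\nabla C$ terms that, after the same antisymmetrisation, yield precisely the four $C_{,a}C_{,l}$-trace expressions in the last line of \eqref{eq:BC2R} with coefficient $\tfrac{1}{9(n-2)^2}$. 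The main obstacle will be bookkeeping: every contribution must be antisymmetrised in $(k,l)$, the correct $(n-2)$-denominators and the overall factor $\tfrac{1}{9}$ must emerge consistently, and one must identify which combinations are eliminated by the Codazzi identities and which are converted by \eqref{eqn:consistency.BC}. I would carry this out systematically (most efficiently with tensor computer algebra such as \texttt{cadabra2}), checking each of the five listed terms of \eqref{eq:BC2R} against the sum of contributions from $\nabla S$ and $S\cdot S$.
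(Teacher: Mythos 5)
Your approach coincides with the paper's proof, which consists precisely of the standard perturbation formula for the curvature of $\newnabla=\nabla+A$,
\[
	\hat R\indices{^i_{jkl}}
	=R\indices{^i_{jkl}}
	+\young(k,l)
	\left(
		\nabla_kA\indices{^i_{jl}}
		+A\indices{^i_{mk}}A\indices{^m_{jl}}
	\right),
\]
combined with the Codazzi equation \eqref{eq:Codazzi:three} for $B_{ijk}$, exactly as you outline. One correction: the planned substitution of \eqref{eqn:consistency.BC} for $\mathring C_{,jk}$ is unnecessary and would in fact carry you away from the stated result, since \eqref{eq:BC2R} retains the second derivatives of $C$ explicitly; beyond the Codazzi identity killing the $B$-part of the derivative term, the only simplification needed there is that $\young(k,l)\,g\indices{^i_j}C_{,lk}=0$ because second covariant derivatives of a scalar commute, after which the surviving $\nabla\nabla C$ contributions are precisely the $\tfrac{3}{n-2}\bigl(g\indices{^i_l}C_{,jk}+g_{jl}C\indices{^{,i}_{,k}}\bigr)$ terms of \eqref{eq:BC2R}.
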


\begin{proof}
	The formula follows from applying the definition of the curvature of an affine
	connection,
	\[
		\hat R\indices{^i_{jkl}}
		=R\indices{^i_{jkl}}
		+\young(k,l)
		\left(
			\nabla_kA\indices{^i_{jl}}
			+A\indices{^i_{mk}}A\indices{^m_{jl}}
		\right),
	\]
	taking into account the Codazzi equation~\eqref{eq:Codazzi:3rd} for $B_{ijk}$.
\end{proof}

\begin{corollary}
	A superintegrable system on a constant curvature manifold of dimension
	$n\geq3$ is abundant if and only if its stucture connection is flat.
\end{corollary}

\begin{proof}
	We can decompose the curvature tensor $\hat R_{ijkl}$ into its symmetric
	and antisymmetric part with respect to the indices $(i,j)$.  Due to the
	symmetry of $A_{ijk}$ the antisymmetric part has Riemann symmetry.  The
	symmetric part is proportional to
	\[
		{\young(kji,l)}^*g_{ij}C_{,kl}.
	\]
	Taking the trace in $(i,k)$ shows that this term vanishes if and only if
	$C_{,kl}$ is trace.  The latter implies that we can choose a gauge in
	wich $C$ vanishes.  But for $C$ identically zero, $\hat R_{ijkl}$ becomes
	the curvature tensor in \eqref{eq:Master:B}.
\end{proof}

\subsection{The variety of abundant superintegrable systems}

In the previous sections we have shown how an abundant superintegrable system
can be reconstructed from the values of its structure tensor in a single point
and we have given explicit algebraic equations which are necessary and
sufficient conditions for a tensor to be the structure tensor of an abundant
superintegrable system.  In order to understand the corresponding variety and
its relation to the quasi-projective variety defined in
Section~\ref{sec:variety}, we first need to comment on a technical subtlety.

Up to now we required a superintegrable system to be irreducible.  This assumption enabled us to define the structure tensor in terms of the Killing
tensors in the superintegable system, given by the Moore-Penrose inverse.
Every time we have been writing down the structure tensor, we were considering it to be an
explicit function of the Killing tensors in the superintegrable system.  For an abundant system, in contrast, we can now give an alternative, implicit way of defining the
structure tensor, namely by using the non-linear
prolongation~\eqref{eq:prolongation:T} and requiring the corresponding
algebraic integrability conditions~\eqref{eq:SIC:K:abundant} to hold.  Note
that superintegrable systems arising in this way are always non-degenerate,
but need not be irreducible (see Figure~\ref{fig:Venn}).  Nevertheless,
if they are irreducible, then obviously both definitions coincide.

\begin{figure}[ht]
	\centering
	\includegraphics[width=.375\textwidth]{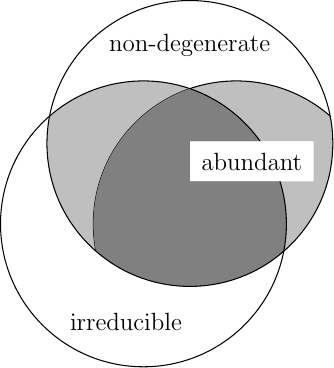}
	\caption{}
	\label{fig:Venn}
\end{figure}

With these clarifications made, we can illustrate the relation between the two varieties
geometrically as sketched in Figure~\ref{fig:varieties}:  Denote by
$F_{n_1,n_2}\bigl(\mathcal K(M)\bigr)$ the flag variety of inclusions
$U_1\subset U_2\subset\mathcal K(M)$ of subspaces $U_1$ and $U_2$ in the space
of Killing tensors with dimensions $\dim U_1=n_1=2n-1$ and $\dim
U_2=n_2=\frac12n(n+1)$.  Forgetting either one of these two subspaces,
we have the two projections
\begin{align*}
	\pi_i&:F_{n_1,n_2}\bigl(\mathcal K(M)\bigr)\longrightarrow G_{n_i}\bigl(\mathcal K(M)\bigr)&
	i&=1,2
\end{align*}
onto the corresponding Grassmannians.
Denote by $\mathcal I\subseteq\mathcal S$ the set of
\emph{irreducible} non-degenerate superintegrable systems.
In Section~\ref{sec:variety} we have shown that its image under the canonical
map
\[
	\Phi_1:\mathcal I\longrightarrow G_{n_1}\bigl(\mathcal K(M)\bigr)
\]
is a quasi-projective variety.  In the same way, we can define a canonical map
\[
	\Phi_2:\mathcal A\longrightarrow G_{n_2}\bigl(\mathcal K(M)\bigr)
\]
from the set $\mathcal A\subseteq\mathcal S$ of \emph{abundant} non-degenerate
superintegrable systems to the Grassmannian $G_{n_2}\bigl(\mathcal
K(M)\bigr)$, by mapping such a system to the space spanned by all its
\emph{linearly} independent Killing tensors.

\begin{figure}[ht]
	\[
		\xymatrix@C=0pt@M=6pt{
			n_1=2n-1&&
			F_{n_1,n_2}\bigl(\mathcal K(M)\bigr)\ar[dl]_{\pi_1}\ar[dr]^{\pi_2}&&
			n_2=\frac{n(n+1)}2\\
			\mathcal I\ar[r]^(.33){\Phi_1}&
			G_{n_1}\bigl(\mathcal K(M)\bigr)&&
			G_{n_2}\bigl(\mathcal K(M)\bigr)&
			\mathcal A\ar[l]_(.33){\Phi_2}
		}
	\]
	\caption{}
	\label{fig:varieties}
\end{figure}

\noindent As we are going to see shortly, the structure tensor~$T_{ijk}$ of an abundant system
can be obtained from its image under $\Phi_2$ by solving the prolongation
equation \eqref{eq:prolongation:K} for $T_{ijk}$.  The potential can then be
recovered by integrating the prolongation \eqref{eq:prolongation:V}.  That is,
an explicit knowledge of the set $\Phi_2(\mathcal A)$ solves the
classification problem for abundant non-degenerate superintegrable systems.
This motivates the following definition.

\begin{definition}
	We call the subset
	\[
		\Phi_2(\mathcal A)\subset G_{\frac{n(n+1)}2}\bigl(\mathcal K(M)\bigr)
	\]
	the \emph{classification space for abundant non-degenerate superintegrable
	systems}.
\end{definition}

Note that for abundant systems we have a fibre bundle
\[
	\Phi_1(\mathcal A)\longrightarrow
	\Phi_2(\mathcal A),
\]
whose fibres consist of superintegrable systems possessing the same structure
tensor and are isomorphic to the Grassmannian $G_{n_1}(n_2)$.

We are now ready to state our second main result.

\begin{theorem}
	The classification space for abundant non-degenerate superintegrable
	systems on a Riemannian manifold $M$ is a subvariety in the Grassmannian
	$G_{\frac{n(n+1)}2}\bigl(\mathcal K(M)\bigr)$ of $n(n+1)/2$-dimensional
	subspaces in the space $\mathcal K(M)$ of Killing tensors on $M$.  It is
	isomorphic to the variety of cubic forms
	\[
		\Psi_{ijk}x^ix^jx^k
	\]
	on $\mathbb R^n$ satisfying
	\begin{equation}
		\label{eq:Master:Psi}
		\young(j,k)
		\left(
			\Psi\indices{^a_{ij}}\Psi_{akl}
			+\frac R{n(n-1)}g_{ij}g_{kl}
		\right)=0.
	\end{equation}
\end{theorem}

\begin{proof}
	For a fixed point $x_0\in M$, let $Y$ be the variety of cubic forms on
	$T_{x_0}M\cong\mathbb R^n$ satisfying~\eqref{eq:Master:Psi}.

	We construct the isomorphism $f:Y\to\Phi_2(\mathcal A)$ from a map
	\[
		\Sym^3T_{x_0}M\times\Sym^2T_{x_0}M\to\mathcal K(M),
	\]
	sending a pair $(\Psi,S)$ to the Killing tensor $K\in\mathcal K(M)$ with
	\begin{itemize}
		\item $K_{ij}(x_0)=S_{ij}$,
		\item $K_{ij,k}(x_0)$ given by \eqref{eq:prolongation:K} and
		\item $K_{ij,kl}(x_0)$ given by \eqref{eq:K''},\footnote{%
				Recall that any Killing tensor is determined by its values in a point and the values of its first \emph{and second} derivatives in that point, cf.\ Section~\ref{sec:prolongation.Killing.tensors}.}
			with
		\item $T_{ijk}(x_0)$ given by \eqref{eq:BC2T} with $B_{ijk}(x_0)=\Psi$ and $C(x_0)=0$ and
		\item $T_{ijk,l}(x_0)$ given by \eqref{eq:prolongation:T}.
	\end{itemize}
	This map is linear in $S$ and polynomial in $\Psi$ and hence defines a
	regular map $f:Y\to G_{n_2}(\mathcal K(M))$.  Corollary~\ref{cor:Master:B}
	assures that its image lies in $\Phi_2(\mathcal A)$.

	The inverse of $f$ is a regular map $\Phi_2(\mathcal A)\to Y$,
	constructed as follows.  Consider a point $U_2\in\Phi_2(\mathcal A)$,
	i.e.~a subspace $U_2\subset\mathcal K(M)$ spanned by $n_2$ linearly
	independent Killing tensors $K^{(\alpha)}$ of an abundant system.
	Contracting $(i,j)$ in \eqref{eq:prolongation:K}, one obtains
	\begin{equation}
		\label{eq:KS=trK'}
		K^{ab}\tilde T_{abk}=K\indices{^a_{a,k}}
	\end{equation}
	with
	\[
		\tilde T_{ijk}
		=\frac13\young(ij)\bigl(g_{kj}t_i-T_{kji}\bigr).
	\]
	The definition of $\tilde T_{ijk}$ implies
	\begin{align*}
		g^{ij}\tilde T_{ijk}&=0&
		s_i&:=g^{jk}\tilde T_{ijk}=\frac n3t_k.
	\end{align*}
	Using \eqref{eq:SIC:V:linear}, we can express $T_{ijk}$ linearly in
	$\tilde T_{ijk}$ as
	\[
		T_{kji}
		=\frac32
		\left(
			\frac{n-2}{n(n-1)}s_ig_{jk}
			+\frac1{n-1}s_jg_{ik}
			-\tilde T_{ijk}
		\right).
	\]
	Taking Equation~\eqref{eq:KS=trK'} for each of the $K^{(\alpha)}$, we
	obtain a linear system of the form
	\[
		AX=B,
	\]
	where
	\begin{itemize}
		\item
			$A$ is the $n_2\!\times\!n_2$\,-matrix
			whose rows contain the components $K_{ij}^{(\alpha)}$,
		\item
			$B$ is the $n_2\!\times\!n$\,-matrix
			whose rows contain the components $g^{ab}K^{(\alpha)}_{ab,k}$ and
		\item
			$X$ the $n_2\!\times\!n$\,-matrix
			containing the components $\tilde T_{ijk}$.
	\end{itemize}
	The matrix $A$ is invertible.  Indeed, if the Killing tensors
	$K^{(\alpha)}$ are linearly dependent at some point $x$, then they are
	linearly dependent in $\mathcal K(M)$ by Proposition~\ref{prop:SIC:K},
	which is a contradiction.  Therefore the components $\tilde T_{ijk}$, given by
	\[
		X=A^{-1}B=\frac{(\operatorname{Adj} A)B}{\det A},
	\]
	are rational in $K^{(\alpha)}_{ij}$ and $K^{(\alpha)}_{ij,k}$.  Since the
	Codazzi tensor $B_{ijk}$ is linear in $T_{ijk}$, which in turn is linear
	in $\tilde T_{ijk}$, the same is true for $B_{ijk}$.  Note that derivatives and
	evaluation are linear operations, so that $K^{(\alpha)}_{ij}(x_0)$ and
	$K^{(\alpha)}_{ij,k}(x_0)$ are linear functions on $\mathcal K(M)$.
	Consequently, the components $B_{ijk}(x_0)$ are rational functions on
	$\mathcal K(M)^{n_2}$ with non-vanishing denominator on the Stiefel
	manifold $V_{n_2}\bigl(\mathcal K(M)\bigr)$.  Since the construction is
	independent of the choice of the basis $K^{(\alpha)}$ in the subspace
	$U_2$, these functions descend to regular functions on the Grassmannian
	$G_{n_2}(\mathcal K(M))$.  This defines a regular map $G_{n_2}(\mathcal
	K(M))\to\Sym^3T_{x_0}M$, mapping the subspace spanned by the $n_2$ Killing
	tensors $K^{(\alpha)}$ to the cubic $\Psi_{ijk}=\frac13B_{ijk}(x_0)$.  By
	Corollary~\ref{cor:Master:B} it restricts to a map $\Phi_2(\mathcal
	A)\to Y$.
\end{proof}

\section{Examples}
\label{sec:examples}

\begin{table}[ht]
\centering
\textbf{Families of non-degenerate second-order superintegable systems\newline in arbitrary dimension~$n\geq3$}
\bigskip

\begin{tabular}{p{5.7cm}|p{6.2cm}}
	\toprule
	\emph{\bfseries Euclidean geometry}\newline
	$g=\sum_i dx_i^2$
	& \emph{\bfseries Complex $n$-sphere}\newline
	  $g = -n(n-1)\,\sum_{i=1}^n \frac{dy_i^2}{y_n^2} = \sum_{i=0}^n dx_i^2$\newline
	  (ambient coordinates $x_i$ on $\mathbb{R}^{n+1}\supset\mathbb{S}^n$)
	\\
	\toprule
	\textbf{Isotropic harmonic oscillator}\newline
	$V=\omega^2\,\sum_ix_i^2 +\sum_i \alpha_ix_i$\newline
	$B=0$ mod gauge\newline
	$\bar t = 0$
	& ---
	\\ \midrule
	\textbf{Smorodinsky-Winternitz I}
	& \textbf{Smorodinsky-Winternitz I'} \\
	$V=\sum_{i=1}^n \left( \frac{a_i}{x_i^2}+\omega^2\,x_i^2 \right)$
	& $V=-\frac{y_n^2}{n(n-1)}\sum_{i=1}^n \biggl(\frac{a_i}{y_i^2}+\omega^2y_i^2\biggr)$	\\
	$B=-\frac{3}{n-1}\,\sum_i(x_i^2\ln x_i)$ mod gauge
	& $B=\frac{3n(n-1)}{2y_n^2}\sum_{i=1}^n(y_i^2\ln y_i)$ mod gauge\\
	$\bar t = -\frac3{n+2}\,\sum_k \ln x_k$
	& $\bar t = \frac3{n+2}\,\left((n+1)\ln x_n -\sum_{i\ne n} \ln x_i\right)$
	\\ \midrule
	\textbf{Smorodinsky-Winternitz II}
	& \textbf{Smorodinsky-Winternitz II'} \\
	$V=\sum_{i\in J}(4\omega^2x_i^2+a_ix_i)$\newline
	\hspace*{\fill}$+\sum_{i\in J^c}\biggl(\frac{a_i}{x_i^2}+\omega^2x_i^2\biggr)$
	& $V=-\frac{\,y_n^2}{n(n-1)}
	\Biggl[
	\sum_{i\in J}\Bigl(4\omega^2y_i^2+a_iy_i\Bigr)$\vspace{-1em}\newline
	\hspace*{\fill} $+\sum_{i\in J^c}\biggl(\omega^2y_i^2+\frac{a_i}{y_i^2}\biggr)
	\Biggr]$ \\
	$B=-\frac3{n-1}\sum_{i\in J^c}x_i^2\ln x_i$ mod gauge
	& $B=\frac{3n(n-1)}{2y_n^2}\sum_{i\in J^c}y_i^2\ln y_i$ mod gauge \\
	$\bar t=-\frac3{n+2}\,\sum_{i\in J^c}\ln x_i$
	& $\bar t=\frac3{n+2}\,\left((n+2)\ln y_n -\sum_{i\in J^c}\ln y_i\right)$
	\\ \midrule
	---
	& \textbf{Generic system on the $n$-sphere}\newline
	$V=\sum_{i=0}^n\frac{a_i}{x_i^2}$\newline
	$B=-\frac32\sum_{i=0}^nx_i^2\ln x_i$ mod gauge\newline
	$\bar t=-\frac3{n+2}\,\sum_{i=0}^n \ln(x_i)$
	\\ \bottomrule
\end{tabular}
\bigskip
\caption{%
	Overview of examples discussed in the text. Examples in the same row are Stäckel equivalent. Note that the potential $V$ is stated up to an additive constant, and $B$ up to gauge terms. All examples are abundant with gauge choice $C=0$.
}
\label{tab:examples}
\end{table}

Arbitrary-dimensional families of superintegrable systems can be obtained by generalising low-dimensional examples, and subsequently through Bôcher
contractions and Stäckel transforms. In the following we list some
important families of superintegrable systems in arbitrary dimension, cf.\ Table~\ref{tab:examples}.
For each system we detail its potential $V$ (omitting the additive constant) and its structure function $B$ in a gauge where $C=0$ (all
examples in the list are abundant in particular). 

On flat space we use standard coordinates $x_i$.
On the $n$-sphere, in order to make the similarity
between the flat and the curved case apparent, we use local coordinates $y_i$ with
\[
	g=-n(n-1)\sum_{i=1}^n\frac{dy_i^2}{y_n^2},
\]
except for the generic system, which we write in the coordinates $x_i$ of the ambient Euclidean space.

\subsection{Isotropic harmonic oscillator.}

\begin{align*}
	V&=\sum_{i=1}^n(\omega^2x_i^2+a_ix_i)&
	B&=0
\end{align*}
This is the trivial case already mentioned in Example~\ref{ex:IHO}. Its structure tensor vanishes and therefore, up to gauge terms, all its structure functions vanish as well.

\subsection{Smorodinsky-Winternitz I}

\begin{align*}
	V&=\sum_{i=1}^n\biggl(\frac{a_i}{x_i^2}+\omega^2x_i^2\biggr)&
	B&=-\frac{3}{n-1}\sum_{i=1}^nx_i^2\ln x_i
\end{align*}
This system was first described on flat space by Friš et al.~\cite{FMSUW65}
and is often called the ``generic system on flat space''.  For low dimensions,
other labels have also been used in the literature such as $(0,11,0)$ in
\cite{Kress&Schoebel} or [E1] in \cite{Kalnins&Kress&Pogosyan&Miller}.  The
corresponding label in three dimensions is [I] in \cite{KKM06a,KKM07c,Capel}.

This system has a Stäckel invariant counterpart on the $n$-sphere, known as
[I'] in dimension three \cite{KKM07c}:
\begin{align*}
	V&=-\frac{y_n^2}{n(n-1)}\sum_{i=1}^n\biggl(\frac{a_i}{y_i^2}+\omega^2y_i^2\biggr)&
	B&=\frac{3n(n-1)}{2y_n^2}\sum_{i=1}^ny_i^2\ln y_i
\end{align*}

\subsection{Smorodinsky-Winternitz II}

\begin{align*}
	V&
	=\sum_{i\in J}(4\omega^2x_i^2+a_ix_i)
	+\sum_{i\in J^c}\biggl(\frac{a_i}{x_i^2}+\omega^2x_i^2\biggr)&
	B&=-\frac3{n-1}\sum_{i\in J^c}x_i^2\ln x_i
\end{align*}
In dimension~$2$, this system is also denoted by $(0,1,0)$ \cite{Kress&Schoebel} or [E2] in \cite{Kalnins&Kress&Pogosyan&Miller}.
In dimension~$3$, it is labeled [IV] \cite{KKM06a}.
This system formally appears as a superposition of the Smorodinski-Winternitz
system I and the isotropic harmonic oscillator, defined by a partition of the
index set,
\[
	\{1,2,\ldots,n\}=J\cup J^c.
\]
This might be an indication for the existence of a composition of
superintegrable systems similarly to the operad construction of separable
systems on spheres \cite{Schoebel&Veselov}.

The corresponding system on the $n$-sphere reads
\begin{align*}
	V&=-\frac{\,y_n^2}{n(n-1)}
	\Biggl[
		 \sum_{i\in J}\Bigl(4\omega^2y_i^2+a_iy_i\Bigr)
		+\sum_{i\in J^c}\biggl(\omega^2y_i^2+\frac{a_i}{y_i^2}\biggr)
	\Biggr] \\
	B&=\frac{3n(n-1)}{2y_n^2}\sum_{i\in J^c}y_i^2\ln y_i
\end{align*}
In low dimensions, the labels [S1] \cite{Kalnins&Kress&Pogosyan&Miller} and [IV'] \cite{KKM06a,Capel} have been used for this system.

\subsection{The generic system on the $n$-sphere}

\begin{align*}
	V&=\sum_{i=0}^n\frac{a_i}{x_i^2}&
	B&=-\frac32\sum_{i=0}^nx_i^2\ln x_i
\end{align*}
This system on the $n$-sphere, not to be confused with the generic system on
flat space, is labelled [S9] for 2D in \cite{Kalnins&Kress&Pogosyan&Miller} and
[VIII] in \cite{KKM07c,KKM06a} for 3D. Its name and its significance result from the fact that in
dimensions two and three any other non-degenerate second-order superintegrable
system can be obtained from it via Stäckel transforms and contractions.

Note that the term ``generic'' has a precise meaning in our algebraic
geometric context:  it refers to a Zariski open subset in the classification
space.  We remark that if the latter turns out to be reducible, there might as
well be several ``generic'' systems, as observed for the Euclidean plane
\cite{Kress&Schoebel}.

\sloppy
\bibliographystyle{amsalpha}
\bibliography{Ingrid}

\newcommand{\etalchar}[1]{$^{#1}$}
\providecommand{\bysame}{\leavevmode\hbox to3em{\hrulefill}\thinspace}
\providecommand{\MR}{\relax\ifhmode\unskip\space\fi MR }
\providecommand{\MRhref}[2]{%
  \href{http://www.ams.org/mathscinet-getitem?mr=#1}{#2}
}
\providecommand{\href}[2]{#2}
\begin{thebibliography}{RKWMS17}

\bibitem[ADS06]{ADS06}
Maria~A. Agrotis, Pantelis~A. Damianou, and Christodoulos Sophocleous,
  \emph{The {T}oda lattice is super-integrable}, Physica A: Statistical
  Mechanics and its Applications \textbf{365} (2006), no.~1, 235 -- 243,
  Fundamental Problems of Modern Statistical Mechanics.

\bibitem[AS]{A&S}
Milton Abramowitz and Irene~A. Stegun (eds.), \emph{Handbook of mathematical
  functions with formulas, graphs, and mathematical tables}, Dover books on
  elementary and intermediate mathematics, XIV-1046 p.\ : FIG.

\bibitem[Ask85]{Askey}
R.~Askey, \emph{Continuous {H}ahn polynomials}, J. Phys. A \textbf{18} (1985),
  no.~16, L1017--L1019.

\bibitem[AW85]{Askey&Wilson}
R.~Askey and J.~Wilson, \emph{Some basic hypergeometric orthogonal polynomials
  that generalize {J}acobi polynomials}, Mem. Amer. Math. Soc. \textbf{54}
  (1985), no.~319, iv+55.

\bibitem[Bat53]{Bateman53}
Harry Bateman, \emph{Higher transcendental functions}, vol. I--III,
  McGraw-Hill, New York, 1953, https://authors.library.caltech.edu/43491/.

\bibitem[Bat54]{Bateman54}
\bysame, \emph{Tables of integral transforms}, vol. I--II, McGraw-Hill, New
  York, 1954, https://authors.library.caltech.edu/43489/.

\bibitem[BCG{\etalchar{+}}13]{BCG+13}
R.L. Bryant, S.S. Chern, R.B. Gardner, H.L. Goldschmidt, and P.A. Griffiths,
  \emph{Exterior differential systems}, Mathematical Sciences Research
  Institute Publications, Springer New York, 2013.

\bibitem[BCLO11]{BCLO11}
Ronald Boisvert, Charles~W. Clark, Daniel Lozier, and Frank Olver, \emph{A
  special functions handbook for the digital age}, Notices of the AMS (2011),
  905--911.

\bibitem[BDK93]{BDK93}
D.~Bonatsos, C.~Daskaloyannis, and K.~Kokkotas, \emph{Quantum-algebraic
  description of quantum superintegrable systems in two dimensions}, Phys. Rev.
  A (3) \textbf{48} (1993), no.~5, R3407--R3410.

\bibitem[BH09]{Ballesteros&Herranz}
{\'{A}}ngel Ballesteros and Francisco~J Herranz, \emph{Maximal
  superintegrability of the generalized kepler{\textendash}coulomb system
  {onN}-dimensional curved spaces}, Journal of Physics A: Mathematical and
  Theoretical \textbf{42} (2009), no.~24, 245203.

\bibitem[BKM86]{BKM86}
C.~P. Boyer, E.~G. Kalnins, and W.~Miller, Jr., \emph{St\"ackel-equivalent
  integrable hamiltonian systems}, SIAM Journal on Mathematical Analysis
  \textbf{17} (1986), no.~4, 778--797.

\bibitem[Cap14]{Capel}
Joshua~J. Capel, \emph{Classification of second-order
  conformally-superintegrable systems}, Ph.D. thesis, PhD Thesis, School of
  Mathematics and Statistics, University of New South Wales, 2014.

\bibitem[CK14]{CK14}
J.~J. Capel and J.~M. Kress, \emph{Invariant classification of second-order
  conformally flat superintegrable systems}, J. Phys. A \textbf{47} (2014),
  no.~49, 495202, 33.

\bibitem[CKP15]{CKP15}
Joshua~J. Capel, Jonathan~M. Kress, and Sarah Post, \emph{Invariant
  classification and limits of maximally superintegrable systems in 3d},
  Symmetry, Integrability and Geometry: Methods and Applications (2015).

\bibitem[{\relax DLMF}]{DLMF}
\emph{{NIST Digital Library of Mathematical Functions}}, http://dlmf.nist.gov/,
  Release 1.0.24 of 2019-09-15, F.~W.~J. Olver, A.~B. {Olde Daalhuis}, D.~W.
  Lozier, B.~I. Schneider, R.~F. Boisvert, C.~W. Clark, B.~R. Miller, B.~V.
  Saunders, H.~S. Cohl, and M.~A. McClain, eds.

\bibitem[DT07]{DT07}
C.~Daskaloyannis and Y.~Tanoudis, \emph{Quantum superintegrable systems with
  quadratic integrals on a two dimensional manifold}, J. Math. Phys.
  \textbf{48} (2007), no.~7, 072108, 22.

\bibitem[DTGVL05]{DTGVL}
Gema~M. Diaz-Toca, Laureano Gonzalez-Vega, and Henri Lombardi,
  \emph{Generalizing {C}ramer’s rule: Solving uniformly linear systems of
  equations}, SIAM Journal on Matrix Analysis and Applications \textbf{27}
  (2005), no.~3, 621--637.

\bibitem[DY06]{DY06}
C.~Daskaloyannis and K.~Ypsilantis, \emph{Unified treatment and classification
  of superintegrable systems with integrals quadratic in momenta on a
  two-dimensional manifold}, J. Math. Phys. \textbf{47} (2006), no.~4, 042904,
  38.

\bibitem[ERMS17]{EMS17}
Mauricio~A. Escobar~Ruiz, Willard Miller, Jr., and Eyal Subag,
  \emph{Contractions of degenerate quadratic algebras, abstract and geometric},
  SIGMA Symmetry Integrability Geom. Methods Appl. \textbf{13} (2017), Paper
  No. 099, 32. \MR{3740337}

\bibitem[Fer81]{Ferus}
Dirk Ferus, \emph{A remark on codazzi tensors in constant curvature spaces},
  Lecture Notes in Mathematics, vol. 838, p.~257, Springer-Verlag, 1981.

\bibitem[FH00]{Fulton&Harris}
William Fulton and Joe Harris, \emph{Representation theory: {A} first course},
  springer study edition ed., Graduate Texts in Mathematics, no. 129,
  Springer-Verlag, 2000.

\bibitem[FMS{\etalchar{+}}65]{FMSUW65}
J.~Friš, V.~Mandrosov, Ya.A. Smorodinsky, M.~Uhlíř, and P.~Winternitz,
  \emph{On higher symmetries in quantum mechanics}, Physics Letters \textbf{16}
  (1965), no.~3, 354 -- 356.

\bibitem[Ful97]{Fulton}
William Fulton, \emph{{Y}oung tableaux: With applications to representation
  theory and geometry}, London Mathematical Society Student Texts, no.~35,
  Cambridge University Press, 1997.

\bibitem[GIVZ13]{GIVZ13}
Vincent~X Genest, Mourad E~H Ismail, Luc Vinet, and Alexei Zhedanov, \emph{The
  {D}unkl oscillator in the plane: I. {S}uperintegrability, separated
  wavefunctions and overlap coefficients}, Journal of Physics A: Mathematical
  and Theoretical \textbf{46} (2013), no.~14, 145201.

\bibitem[GL19]{Gover&Leistner}
A.~Rod Gover and Thomas Leistner, \emph{Invariant prolongation of the {K}illing
  tensor equation}, Ann. Mat. Pura Appl. (4) \textbf{198} (2019), no.~1,
  307--334. \MR{3918632}

\bibitem[Gol67]{Goldschmidt1967}
Hubert Goldschmidt, \emph{Integrability criteria for systems of nonlinear
  partial differential equations}, J. Differential Geom. \textbf{1} (1967),
  no.~3-4, 269--307.

\bibitem[Gra04]{Gravel}
Simon Gravel, \emph{Hamiltonians separable in cartesian coordinates and
  third-order integrals of motion}, Journal of Mathematical Physics \textbf{45}
  (2004), 1003--1019.

\bibitem[GVZ14]{GVZ14}
Vincent~X. Genest, Luc Vinet, and Alexei Zhedanov, \emph{Interbasis expansions
  for the isotropic 3{D} harmonic oscillator and bivariate {K}rawtchouk
  polynomials}, J. Phys. A \textbf{47} (2014), no.~2, 025202, 13. \MR{3150528}

\bibitem[HGDR84]{HGDR84}
J.~Hietarinta, B.~Grammaticos, B.~Dorizzi, and A.~Ramani,
  \emph{Coupling-constant metamorphosis and duality between integrable
  hamiltonian systems}, Phys. Rev. Lett. \textbf{53} (1984), 1707--1710.

\bibitem[HMPZ18]{Hoque&Marquette&Post&Zhang}
Md.~Fazlul Hoque, Ian Marquette, Sarah Post, and Yao-Zhong Zhang,
  \emph{Algebraic calculations for spectrum of superintegrable system from
  exceptional orthogonal polynomials}, Annals of Physics \textbf{391} (2018),
  203 -- 215.

\bibitem[IW53]{Inonu&Wigner}
E.~Inönü and E.P. Wigner, \emph{On the contraction of groups and their
  representations}, Proc. Nat. Acad. Sci. USA \textbf{39} (1953), 510--524.

\bibitem[Kal86]{Kalnins}
Ernest~G. Kalnins, \emph{Separation of variables for {R}iemannian spaces of
  constant curvature}, Pitman Monographs and Surveys in Pure and Applied
  Mathematics, vol.~28, Longman Scientific \& Technical, Harlow, England, 1986.

\bibitem[KJS16a]{KMSa}
Ernie~G. Kalnins, Willard~Miller Jr., and Eyal Subag, \emph{B{\^o}cher
  contractions of conformally superintegrable {L}aplace equations}, SIGMA
  \textbf{12} (2016), no.~038, 31 pages.

\bibitem[KJS16b]{KMSb}
\bysame, \emph{Laplace equations, conformal superintegrability and {B}{\^o}cher
  contractions}, Acta Polytechnica \textbf{56} (2016), no.~3, 214--223.

\bibitem[KK02]{KK02}
J.~M. Kress and E.~G. Kalnins, \emph{Multiseparability and superintegrability
  in three dimensions}, Phys. Atomic Nuclei \textbf{65} (2002), no.~6,
  1047--1051, Symposium on Integrable Systems (Dubna, 2000).

\bibitem[KKM05a]{KKM05a}
E.~G. Kalnins, J.~M. Kress, and W.~Miller, Jr., \emph{Second-order
  superintegrable systems in conformally flat spaces. {I}. {T}wo-dimensional
  classical structure theory}, J. Math. Phys. \textbf{46} (2005), no.~5,
  053509, 28.

\bibitem[KKM05b]{KKM05b}
\bysame, \emph{Second order superintegrable systems in conformally flat spaces.
  {II}. {T}he classical two-dimensional {S}t\"ackel transform}, J. Math. Phys.
  \textbf{46} (2005), no.~5, 053510, 15.

\bibitem[KKM05c]{KKM05c}
\bysame, \emph{Second order superintegrable systems in conformally flat spaces.
  {III}. {T}hree-dimensional classical structure theory}, J. Math. Phys.
  \textbf{46} (2005), no.~10, 103507, 28.

\bibitem[KKM06a]{KKM06a}
\bysame, \emph{Second order superintegrable systems in conformally flat spaces.
  {IV}. {T}he classical 3{D} {S}t\"ackel transform and 3{D} classification
  theory}, J. Math. Phys. \textbf{47} (2006), no.~4, 043514, 26.

\bibitem[KKM06b]{KKM06b}
\bysame, \emph{Second-order superintegrable systems in conformally flat spaces.
  {V}. {T}wo- and three-dimensional quantum systems}, J. Math. Phys.
  \textbf{47} (2006), no.~9, 093501, 25.

\bibitem[KKM07a]{KKM07a}
\bysame, \emph{Fine structure for 3{D} second-order superintegrable systems:
  three-para\-me\-ter potentials}, J. Phys. A \textbf{40} (2007), no.~22,
  5875--5892.

\bibitem[KKM07b]{KKM07b}
\bysame, \emph{Nondegenerate 2{D} complex {E}uclidean superintegrable systems
  and algebraic varieties}, J. Phys. A \textbf{40} (2007), no.~13, 3399--3411.

\bibitem[KKM07c]{KKM07c}
\bysame, \emph{Nondegenerate three-dimensional complex {E}uclidean
  superintegrable systems and algebraic varieties}, J. Math. Phys. \textbf{48}
  (2007), no.~11, 113518, 26.

\bibitem[KKM18]{KKM18}
Ernest~G Kalnins, Jonathan~M Kress, and Willard Miller, \emph{Separation of
  variables and superintegrability}, 2053-2563, IOP Publishing, 2018.

\bibitem[KKMP07]{Kalnins&Kress&Miller&Pogosyan}
E.~G. Kalnins, J.~M. Kress, W.~Miller, and G.~S. Pogosyan, \emph{Nondegenerate
  superintegrable systems in n-dimensional euclidean spaces}, Physics of Atomic
  Nuclei \textbf{70} (2007), no.~3, 545--553.

\bibitem[KKMP09]{KKMP09}
E.~G. Kalnins, J.~M. Kress, W.~Miller, Jr., and S.~Post, \emph{Structure theory
  for second order 2{D} superintegrable systems with 1-parameter potentials},
  SIGMA Symmetry Integrability Geom. Methods Appl. \textbf{5} (2009), 008, 24.

\bibitem[KKMW03]{KKMW03}
E.~G. Kalnins, J.~M. Kress, W.~Miller, Jr., and P.~Winternitz,
  \emph{Superintegrable systems in {D}arboux spaces}, J. Math. Phys.
  \textbf{44} (2003), no.~12, 5811--5848.

\bibitem[KKPM01]{Kalnins&Kress&Pogosyan&Miller}
E.~G. Kalnins, J.~M. Kress, G.~S. Pogosyan, and W.~{Miller, Jr.},
  \emph{Completeness of multiseparable superintegrability in two-dimensional
  constant curvature spaces}, Journal of Physics A: Mathematical and General
  \textbf{34} (2001), 4705--4720.

\bibitem[KKW02]{KKW02}
E.~G. Kalnins, J.~M. Kress, and P.~Winternitz, \emph{Superintegrability in a
  two-dimensional space of nonconstant curvature}, J. Math. Phys. \textbf{43}
  (2002), no.~2, 970--983.

\bibitem[KLV86]{KLV86}
I.S. Krasil'{\v{s}}{\v{c}}ik, V.V. Lychagin, and A.M. Vinogradov,
  \emph{Geometry of jet spaces and nonlinear partial differential equations},
  Advanced studies in contemporary mathematics, Gordon and Breach Science
  Publishers, 1986.

\bibitem[KM86]{Kalnins&Miller}
Ernest~G. Kalnins and Willard {Miller Jr.}, \emph{Separation of variables on
  {$n$}-dimensional {R}iemannian manifolds. {I}. {T}he {$n$}-sphere {$S_n$} and
  {E}uclidean {$n$}-space {$R_n$}}, Journal of Mathematical Physics \textbf{27}
  (1986), no.~7, 1721--1736.

\bibitem[KMP00a]{KMP00a}
E.~G. Kalnins, W.~Miller, Jr., and G.~S. Pogosyan, \emph{Completeness of
  multiseparable superintegrability in {$E_{2,\mathbf C}$}}, J. Phys. A
  \textbf{33} (2000), no.~22, 4105--4120.

\bibitem[KMP00b]{KMP00b}
\bysame, \emph{Completeness of multiseparable superintegrability on the complex
  2-sphere}, J. Phys. A \textbf{33} (2000), no.~38, 6791--6806.

\bibitem[KMP07]{KMP07}
E.~G. Kalnins, W.~Miller, Jr., and S.~Post, \emph{Wilson polynomials and the
  generic superintegrable system on the 2-sphere}, J. Phys. A \textbf{40}
  (2007), no.~38, 11525--11538.

\bibitem[KMP11]{KMP11}
Ernie~G. Kalnins, Willard Miller, Jr., and Sarah Post, \emph{Two-variable
  {W}ilson polynomials and the generic superintegrable system on the 3-sphere},
  SIGMA Symmetry Integrability Geom. Methods Appl. \textbf{7} (2011), Paper
  051, 26. \MR{2804585}

\bibitem[KMP13]{KMP13}
E.~G. Kalnins, W.~Miller, Jr., and S.~Post, \emph{Contractions of 2{D} 2nd
  order quantum superintegrable systems and the {A}skey scheme for
  hypergeometric orthogonal polynomials}, SIGMA Symmetry Integrability Geom.
  Methods Appl. \textbf{9} (2013), 057, 28.

\bibitem[Koo09]{Koornwinder}
T.~H. Koornwinder, \emph{The {A}skey scheme as a four-manifold with corners},
  Ramanujan Journal. An International Journal Devoted to the Areas of
  Mathematics Influenced by Ramanujan \textbf{20} (2009), no.~3, 409--439.

\bibitem[Kre07]{Kress07}
J.~M. Kress, \emph{Equivalence of superintegrable systems in two dimensions},
  Physics of Atomic Nuclei \textbf{70} (2007), no.~3, 560--566.

\bibitem[KS19]{Kress&Schoebel}
Jonathan Kress and Konrad Schöbel, \emph{An algebraic geometric classification
  of superintegrable systems in the euclidean plane}, Journal of Pure and
  Applied Algebra \textbf{223} (2019), no.~4, 1728--1752.

\bibitem[MPR20]{Marquette_2020}
Ian Marquette, Sarah Post, and Lisa Ritter, \emph{A fourth-order
  superintegrable system with a rational potential related to painlev{\'{e}}
  {VI}}, Journal of Physics A: Mathematical and Theoretical \textbf{53} (2020),
  no.~50, 50LT01.

\bibitem[MPW13]{MPW13}
W.~Miller, Jr., S.~Post, and P.~Winternitz, \emph{Classical and quantum
  superintegrability with applications}, J. Phys. A \textbf{46} (2013), no.~42,
  423001, 97.

\bibitem[Nie79]{Nieto}
Michael~Martin Nieto, \emph{Hydrogen atom and relativistic pi‐mesic atom in
  n‐space dimensions}, American Journal of Physics \textbf{47} (1979),
  no.~12, 1067--1072.

\bibitem[Pee06]{Peeters06}
Kasper Peeters, \emph{A field-theory motivated approach to symbolic computer
  algebra}, Computer Physics Communications \textbf{176} (2006), no.~8,
  550--558, \href{https://arxiv.org/abs/cs/0608005}{arXiv:cs/0608005 [cs.SC]}.

\bibitem[Pee07]{Peeters07}
\bysame, \emph{Introducing {C}adabra: a symbolic computer algebra system for
  field theory problems}, 2007,
  \href{https://arxiv.org/abs/hep-th/0701238}{arXiv:hep-th/0701238}.

\bibitem[Pos10]{Post10}
Sarah Post, \emph{Coupling constant metamorphosis, the st\"ackel transform and
  superintegrability}, AIP Conference Proceedings \textbf{1323} (2010), no.~1,
  265--274.

\bibitem[PP90]{Plebanski&Przanowski}
J.~F. Pleba\'{n}ski and M.~Przanowski, \emph{The {$n$}-dimensional classical
  {K}epler's problem ``without integration''}, Rev. Mexicana F\'{\i}s.
  \textbf{36} (1990), no.~3, 465--470. \MR{1070683}

\bibitem[Ptory]{Almagest}
Claudius Ptolemy, \emph{Almagest}, vol.~I, Alexandria, 2\textsuperscript{nd}
  century.

\bibitem[PTV12]{Post&Tsujimoto&Vinet}
Sarah Post, Satoshi Tsujimoto, and Luc Vinet, \emph{Families of superintegrable
  hamiltonians constructed from exceptional polynomials}, Journal of Physics A:
  Mathematical and Theoretical \textbf{45} (2012), no.~40, 405202.

\bibitem[RKWMS17]{RKMS}
M.~A.~Escobar Ruiz, E.~G. Kalnins, Jr. W.~Miller, and E.~Subag,
  \emph{B{\^o}cher and abstract contractions of 2nd order quadratic algebras},
  SIGMA \textbf{13} (2017), no.~013, 38 pages.

\bibitem[Sch12]{Schoebel12}
Konrad Schöbel, \emph{Algebraic integrability conditions for {K}illing tensors
  on constant sectional curvature manifolds}, Journal of Geometry and Physics
  \textbf{62} (2012), no.~5, 1013--1037.

\bibitem[Sch14]{Schoebel14}
\bysame, \emph{The variety of integrable {K}illing tensors on the 3-sphere},
  SIGMA \textbf{10} (2014), no.~080, 48 pages,
  \href{http://arxiv.org/abs/1205.6227}{arxiv:1205.6227}.

\bibitem[Sch15]{Schoebel15}
\bysame, \emph{An algebraic geometric approach to separation of variables},
  Research Series, Springer Spektrum, 2015.

\bibitem[Sch16]{Schoebel16}
\bysame, \emph{Are orthogonal separable coordinates really classified?}, SIGMA
  \textbf{12} (2016), no.~041, 16 pages,
  \href{http://arxiv.org/abs/1510.09028}{arxiv:1510.09028}.

\bibitem[SV15]{Schoebel&Veselov}
Konrad Schöbel and Alexander~P. Veselov, \emph{Separation coordinates, moduli
  spaces and {S}tasheff polytopes}, Communications in Mathematical Physics
  \textbf{337} (2015), no.~3, 1255--1274.

\bibitem[Tem04]{Tem04}
P.~Tempesta, \emph{On superintegrable systems in {$E_2$}: algebraic properties
  and symmetry preserving discretization}, Superintegrability in classical and
  quantum systems, CRM Proc. Lecture Notes, vol.~37, Amer. Math. Soc.,
  Providence, RI, 2004, pp.~223--239.

\bibitem[Wil07]{Wilczynski_I}
E.~J. Wilczynski, \emph{Projective differential geometry of curved surfaces.
  {I}}, Trans. Amer. Math. Soc. \textbf{8} (1907), no.~2, 233--260.
  \MR{1500783}

\bibitem[Wil09]{Wilczynski_IV}
\bysame, \emph{Projective differential geometry of curved surfaces. {IV}},
  Trans. Amer. Math. Soc. \textbf{10} (1909), no.~2, 176--200. \MR{1500833}

\bibitem[Win04]{Win04}
P.~Winternitz, \emph{Superintegrable systems in classical and quantum
  mechanics}, New trends in integrability and partial solvability, NATO Sci.
  Ser. II Math. Phys. Chem., vol. 132, Kluwer Acad. Publ., Dordrecht, 2004,
  pp.~281--297.

\bibitem[WMT]{Wolfram}
Stephen Wolfram, Oleg Marichev, and Michael Trott, \emph{The {M}athematical
  {F}unctions {S}ite}, online database, http://functions.wolfram.com/.

\bibitem[Woj83]{Wojciechowski83}
S.~Wojciechowski, \emph{Superintegrability of the calogero-moser system},
  Physics Letters A \textbf{95} (1983), no.~6, 279 -- 281.

\bibitem[Wol98]{Wolf98}
Thomas Wolf, \emph{Structural equations for {K}illing tensors of arbitrary
  rank}, Computer Physics Communications \textbf{115} (1998), no.~2--3,
  316--329.

\end{thebibliography}
\bigskip

\end{document}